\newtheorem{assumption}{Assumption}
\def\qed{ \ \vrule width.2cm height.2cm depth0cm\smallskip}
\newenvironment{proof}{\noindent {\bf Proof.\/}}{$\qed$\vskip 0.1in}
\newcommand{\we}{\wedge}
\newcommand{\ol}{\overline}
\newcommand{\ul}{\underline}
\newcommand{\ba}{\begin{array}}
\newcommand{\ea}{\end{array}}
\newcommand{\be}{\begin{equation}}
\newcommand{\ee}{\end{equation}}
\newcommand{\bea}{\begin{eqnarray}}
\newcommand{\eea}{\end{eqnarray}}
\newcommand{\beaa}{\begin{eqnarray*}}
\newcommand{\eeaa}{\end{eqnarray*}}
\def\dbE{\mathbb{E}}
\def\dbF{\mathbb{F}}
\def\dbH{\mathbb{H}}
\def\dbL{\mathbb{L}}
\def\dbN{\mathbb{N}}
\def\dbP{\mathbb{P}}
\def\dbR{\mathbb{R}}
\def\a{\alpha}
\def\b{\beta}
\def\g{\gamma}
\def\d{\delta}
\def\e{\varepsilon}
\def\k{\kappa}
\def\l{\lambda}
\def\t{\tau}
\def\f{\varphi}
\def\th{\theta}
\def\o{\omega}
\def\Th{\Theta}
\def\O{\Omega}
\def\bfL{\mathbf L}
\def\cA{{\cal A}}
\def\cD{{\cal D}}
\def\cE{{\cal E}}
\def\cF{{\cal F}}
\def\cH{{\cal H}}
\def\cJ{{\cal J}}
\def\cL{{\cal L}}
\def\cM{{\cal M}}
\def\cN{{\cal N}}
\def\cP{{\cal P}}
\def\cT{{\cal T}}
\def\ch{\textsc{h}}
\def\no{\noindent}
\def\ms{\medskip}
\def\q{\quad}
\def\pa{\partial}
\def\cd{\cdot}
\def\cds{\cdots}
\def\qed{ \hfill \vrule width.25cm height.25cm depth0cm\smallskip}
\newcommand{\basa}{\begin{assumption}}
\newcommand{\easa}{\end{assumption}}
\newcommand{\bas}{\begin{assum}}
\newcommand{\eas}{\end{assum}}
\def\limsup{\mathop{\overline{\rm lim}}}
\def\liminf{\mathop{\underline{\rm lim}}}
\def\esup{\mathop{\rm ess\!-\!sup}}
\def\einf{\mathop{\rm ess\!-\!inf}}
\def\pa{\partial}
 \def\cd{\cdot}
\def\cds{\cdots}
\def\sgn{\hbox{\rm sgn$\,$}}
\def\1{{\bf 1}}
\def\:{\!:\!}
\def \proof{{\noindent \bf Proof\quad}}
\begin{document}

\newtheorem{thm}{Theorem}[section]
\newtheorem{lem}[thm]{Lemma}
\newtheorem{cor}[thm]{Corollary}
\newtheorem{prop}[thm]{Proposition}
\newtheorem{rem}[thm]{Remark}
\newtheorem{eg}[thm]{Example}
\newtheorem{defn}[thm]{Definition}
\newtheorem{assum}[thm]{Assumption}

\renewcommand {\theequation}{\arabic{section}.\arabic{equation}}
\def\thesection{\arabic{section}}

\numberwithin{equation}{section}
\numberwithin{thm}{section}

\title{\bf Perron's method for viscosity solutions of semilinear path dependent PDEs}
\author{Zhenjie  {\sc Ren}\footnote{CMAP, Ecole Polytechnique Paris, ren@cmap.polytechnique.fr. Research supported by grants from R\'egion Ile-de-France}   
}
\maketitle

\begin{abstract}
This paper proves the existence of viscosity solutions of path dependent semilinear PDEs via Perron's method, i.e. via showing that the supremum of viscosity subsolutions is a viscosity solution. We use the notion of viscosity solutions introduced in \cite{EKTZ} which considers as test functions all those smooth processes which are tangent in mean. We also provide a comparison result for semicontinuous viscosity solutions, by using a regularization technique. As an interesting byproduct,  we give a new short proof for the optimal stopping problem with semicontinuous obstacles.
\end{abstract}

\section{Introduction}

The recently developed theory of viscosity solutions for path dependent PDEs extends the classical notion of viscosity solution of PDEs introduced by Crandall and Lions \cite{CrandallLions} (for an overview we refer to \cite{CrandallIshiiLions} and \cite{FlemingSoner}). Nonlinear path dependent PDEs appear in various applications, such as the stochastic control of non-Markovian systems \cite{ETZ1} and the corresponding stochastic differential games \cite{PhamZhang}. They are also intimately related to the backward stochastic differential equations introduced by Pardoux and Peng \cite{PardouxPeng}, and their extension to the second order in \cite{CSTV,STZ2}. Loosely speaking, solutions of backward SDEs can be viewed as Sobolev solutions of path-dependent PDEs, and our goal is to develop the alternative notion of viscosity solutions which is well-known to provide a suitable wellposedness and stability theory in the Markovian case $u(t,\o)=u(t,\o_t)$. 

In the recent work of Ren, Touzi and Zhang \cite{RTZ}, the authors focus on the semilinear path dependent PDEs and prove the comparison result for continuous viscosity solutions, in the spirit of the work of Caffarelli and Cabr\'e \cite{CaffarelliCabre} in the context of PDEs.  In \cite{EKTZ,RTZ} it is also proved that the solutions of corresponding backward SDEs are viscosity solutions, instead, we are interested in proving the existence of viscosity solutions to semilinear path dependent PDEs by PDE-type arguments, that is, by Perron's method.  It is worth noting that in the fully nonlinear case, one may no longer depend on backward SDEs for finding viscosity solutions for path dependent PDEs, and thus the Perron method will be necessary. Although our result cannot be applied to the fully nonlinear case directly, many arguments in this paper could be useful. Also, the Perron method is not only useful in proving the existence of viscosity solutions, but also has applications in various contexts, for example, the wellposedness of envelope viscosity solution (see \cite{BardiCapuzzo}), the uniqueness of martingale problems \cite{CK}, etc. In the proof of Perron's method, we follow the same idea as the classical literature on viscosity solutions of PDEs, but the arguments turn out to be different and nontrivial. 

It is well understood in PDE literature that the comparison result for continuous viscosity solutions is not sufficient for the existence of solutions. In Perron's method, we need a comparison result for semicontinuous viscosity solutions. However, the argument in \cite{RTZ} cannot be adapted into our context, because it is not clear whether upper semicontinuous submartingales are almost everywhere punctually differentiable (a crucial intermediate result in \cite{RTZ}). In this paper, we apply a regularization on semicontinuous viscosity solutions so as to mollify them to be continuous. Let $u$ be a viscosity subsolution, and $u^n$ be its regularized version. A reasonable regularization should satisfy:
\beaa
u^n~\mbox{is continuous};\q u^n\rightarrow u, \mbox{as}~ n\rightarrow\infty;
\q u^n~\mbox{is still a viscosity subsolution.}
\eeaa
The regularization we propose involves a backward distance for paths, is new in literature, satisfies all the above conditions and helps to prove the comparison result. It is worth mentioning that a regularization is probably inevitable in the study of the comparison result for fully nonlinear path dependent PDEs. The regularization we find in this paper might shed light on the future research.

As in the previous work on the viscosity solutions of path dependent PDEs, the optimal stopping result plays a crucial role to overcome the non-local-compactness of the path space. Since we treat semicontinuous viscosity solutions in this paper, we need the corresponding result of optimal stopping under nonlinear expectation for semicontinuous obstacles. In the existing literature, Kobylanski and Quenez \cite{KQ} contains the desired result but only in the case of linear expectation. Peng and Xu studied in \cite{PX} reflected backward SDEs with $L^2$ obstacles, and they proved a crucial intermediate result which can lead to the optimal stopping result. However, since their main interest is reflected backward SDEs, there is no direct theorem that we may apply. In this paper, we give a new simple proof for the optimal stopping problem, by using the minimum condition of the Skorokhod decomposition.

The rest of the paper is organized as follows. Section \ref{sec: preliminary} introduces the often used notations. Section \ref{sec: defn vs ppde} recalls the definition of viscosity solutions to path dependent PDEs. Section \ref{sec: main results} presents the main results of the paper: the comparison result of semicontinuous viscosity solutions and the result of Perron's method. Then Section \ref{sec: Perron} contains the proof of Perron's method, and Section \ref{sec: comparison} is devoted to the comparison result. Section \ref{sec: Optimal stopping} reports the proof of the optimal stopping result. Finally, Section \ref{sec: Appendix} is the appendix in which we complete some proofs.

\section{Preliminary}\label{sec: preliminary}

Throughout this paper let $T>0$ be a given finite maturity, $\O:=\{\o\in C([0,T];\dbR^d):\o_0=0\}$ be the set of continuous paths starting from the origin, and $\Theta:=[0,T]\times\O$. We denote $B$ as the canonical process on $\O$, $\dbF = \{\cF_t, 0\le t\le T\}$ as the canonical filtration,  $\cT$ as the set of all $\dbF$-stopping times taking values in $[0,T]$. Further let $\cT^+$ denote the subset of $\t\in\cT$  taking values in $(0,T]$, and for $\ch \in \cT$, let $\cT_\ch$ and $\cT_\ch^+$ be the subset of $\t\in \cT$ taking values in $[0, \ch]$ and in $(0, \ch]$, respectively. We also denote $\dbP_0$ as the Wiener measure on $\O$, and define the augmented filtration by $\dbF^*:=\{\cF_t\vee \cN; 0\le t\le T\}$, where $N$ is the collection of all $\dbP_0$-null sets.

Following Dupire \cite{Dupire}, we introduce the following pseudo-distance on $\Theta$:
 \beaa
\|\o\| := \sup_{0\le s\le T} |\o_s|,\q  d(\th,\th')
 :=
 |t-t'|+\|\o_{t\wedge}-\o'_{t'\wedge}\|
 &\mbox{for all}&
 \th=(t,\o), \th'=(t',\o')\in\Theta.
 \eeaa
We say  a process valued in some metric space $E$ is in $C^0(\Theta,E)$ whenever it is continuous with respect to $d$. Similarly, $\dbL^0(\cF, E)$ and $\dbL^0(\dbF, E)$ denote the set of $\cF$-measurable random variables and $\dbF$-progressively measurable processes, respectively. We remark that $C^0(\Theta,E)\subset \dbL^0(\dbF, E)$, and when $E=\dbR$, we shall omit it in these notations. 

In this paper, we also use another (backward) pseudo-distance on $\Th$:
\beaa
\overleftarrow{d}(\th,\th'):=|t-t'|+\sup_{s\ge 0}|\o_{(t-s)\vee 0}-\o'_{(t'-s)\vee 0}|.
\eeaa
The following lemma explains the relation between $d(\cd,\cd)$ and $\overleftarrow{d}(\cd,\cd)$.

\begin{lem}\label{lem: difference d,d}
For all $\th,\th'\in\Th$, we have
\be\label{difference d,d}
\big|d(\th,\th')-\overleftarrow{d}(\th,\th')\big|\le \bar\rho(\th,|t-t'|), 
\q \mbox{where}\q \bar\rho(\th,\d):=\sup_{|s-s'|\le \d}|\o_{t\we s}-\o_{t\we s'}|.
\ee
In particular, a function $f:\Th\rightarrow\dbR$ is continuous in $d(\cd,\cd)$ if and only if $f$ is continuous in $\overleftarrow{d}(\cd,\cd)$.
\end{lem}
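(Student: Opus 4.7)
The plan is to bring both distances into a common parametrization so that the $\omega'$-part coincides and only the $\omega$-argument differs in a controlled way. Without loss of generality I assume $t\le t'$ and set $\delta := t'-t = |t-t'|$.

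First I would rewrite both suprema over the common interval $[0,t']$. Splitting the sup in the definition of $d(\theta,\theta')$ into $s\in[0,t]$, $s\in[t,t']$ and $s\ge t'$, and noting that on $[t',T]$ the summand is the constant $|\omega_t-\omega'_{t'}|$ (already attained at $s=t'$), one obtains
\[
 d(\theta,\theta') - \delta \;=\; \sup_{s\in[0,t']} |\omega_{t\wedge s} - \omega'_s|.
\]
For the backward distance, the change of variable $r = t'-s$ (so that $t-s = r-\delta$) gives
\[
 \overleftarrow d(\theta,\theta') - \delta \;=\; \sup_{r\in[0,t']} |\omega_{(r-\delta)\vee 0} - \omega'_r|,
\]
the portion $s\ge t'$ contributing zero since $\omega_0=\omega'_0=0$.

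The main step, and the one demanding the most bookkeeping, is the elementary observation that for every $s\in[0,t']$ the two stopped times $t\wedge s$ and $(s-\delta)\vee 0$ both lie in $[0,t]$ and satisfy $|t\wedge s - (s-\delta)\vee 0|\le \delta$. I would verify this by a routine case split (on whether $s\le t$ and whether $s\le\delta$, four cases). By the very definition of $\bar\rho(\theta,\delta)$ this yields, for each such $s$,
\[
 |\omega_{t\wedge s} - \omega_{(s-\delta)\vee 0}| \;\le\; \bar\rho(\theta,\delta).
\]
Applying the reverse triangle inequality pointwise in $s$ with the common third point $\omega'_s$ and then taking the supremum gives the claimed bound $|d(\theta,\theta')-\overleftarrow d(\theta,\theta')|\le\bar\rho(\theta,|t-t'|)$.

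The "in particular" assertion is then immediate: since $\omega$ is continuous on the compact interval $[0,T]$ it is uniformly continuous, so $\bar\rho(\theta,\delta)\to 0$ as $\delta\to 0$; hence $d(\theta_n,\theta)\to 0$ if and only if $\overleftarrow d(\theta_n,\theta)\to 0$, and the two notions of continuity coincide. I do not expect any conceptual obstacle here; the only real content is the unifying reformulation of the two suprema, after which the modulus bound on the $\omega$-arguments is pointwise and automatic.
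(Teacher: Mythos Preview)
Your proof is correct and follows essentially the same route as the paper's: after aligning the two suprema so that the $\omega'$-argument coincides, you bound the resulting $\omega$-shift by the modulus $\bar\rho$ via the reverse triangle inequality. The paper does this in one line by writing (with the convention $\omega_s=0$ for $s<0$)
\[
\Big|\,|\omega_{t\wedge s}-\omega'_{t'\wedge s}|-|\omega_{t\wedge(t-t'+s)}-\omega'_{t'\wedge s}|\,\Big|\le |\omega_{t\wedge s}-\omega_{t\wedge(t-t'+s)}|\le \bar\rho(\theta,|t-t'|),
\]
which is exactly your argument with the substitution $r=t'-s$ and the identification $t\wedge(s-\delta)=(s-\delta)\vee 0$ for $s\le t'$; your case split merely makes the implicit index bound $|t\wedge s-(s-\delta)\vee 0|\le\delta$ explicit.
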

\begin{proof}
Define $\o_s=0$ for $s<0$. The first claim follows from the simple observation:
\beaa
\Big||\o_{t\we s}-\o'_{t'\we s}|-|\o_{t\we (t-t'+s)}-\o'_{t'\we s}|\Big|\le
|\o_{t\we s}-\o_{t\we (t-t'+s)}|\le
 \bar\rho(\th,|t-t'|).
\eeaa
The second claim is a trivial corollary.
\end{proof}

For any $A\in \cF_T$, $\xi\in \dbL^0(\cF_T, E)$, $X\in \dbL^0(\dbF,E)$, and  $(t,\o)\in \Th$, define:
 \beaa
&A^{t,\o} := \{\o'\in \O: \o\otimes_t \o' \in A\},\q  \xi^{t,\o}(\o')
 :=
 \xi(\o\otimes_t\o'), 
\q  X^{t,\o}_s(\o')
 :=
 X(t+s,\o\otimes_t\o')&\\
 &~~\mbox{for all}~~\o'\in\O,~~
 \mbox{where}\q
 (\o\otimes_t\o')_s:=\o_s\1_{[0, t]}(s) +(\o_t+\o'_{s-t})\1_{(t, T]} (s),\q 0\le s\le T.&
 \eeaa
 Following the standard arguments of monotone class, we have the following simple results.

\begin{lem}\label{lem:measurability F}
Let $0\le t\le s\le T$ and $\o\in \O$. Then
 $A^{t,\o}\in \cF_{s-t}$ for all $A\in\cF_{s}$, $\xi^{t,\o} \in \dbL^0(\cF_{s-t}, E)$ for all $\xi\in \dbL^0(\cF_s, E)$, $X^{t,\o} \in \dbL^0(\dbF, E)$ for all  $X\in \dbL^0(\dbF, E)$, and $\t^{t,\o}-t\in \cT_{s-t}$ for all $\t\in\cT_s$.
\end{lem}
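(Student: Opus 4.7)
The plan is to prove the four claims in order, deriving the later ones from the first by standard measure-theoretic extensions. The fundamental step is the statement about sets, which I would establish by a monotone class / $\pi$-$\lambda$ argument. Define
$$\cD := \{A\in \cF_s : A^{t,\o}\in \cF_{s-t}\}.$$
Since the shift $A\mapsto A^{t,\o}$ commutes with complementation and countable unions, $\cD$ is itself a $\sigma$-algebra. It thus suffices to check that $\cD$ contains a $\pi$-system generating $\cF_s$, for which I would take cylinders of the form $A_r^\Gamma:=\{B_r\in\Gamma\}$ with $r\le s$ and $\Gamma\in\cB(\dbR^d)$. On these, a direct computation gives $(A_r^\Gamma)^{t,\o}=\O$ or $\emptyset$ when $r\le t$ (depending on whether $\o_r\in\Gamma$), and $(A_r^\Gamma)^{t,\o}=\{B_{r-t}\in \Gamma-\o_t\}\in\cF_{r-t}\subset \cF_{s-t}$ when $r>t$. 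Hence $\cD=\cF_s$.

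The second claim follows: for $\xi=\1_A$ with $A\in\cF_s$ one has $\xi^{t,\o}=\1_{A^{t,\o}}$, so the first claim yields the result for indicators; by linearity this extends to simple $\cF_s$-measurable maps, and then to arbitrary $\xi\in \dbL^0(\cF_s,E)$ by approximating $\xi$ pointwise by a sequence of simple maps (working coordinate-wise if $E$ is a general metric space, via a countable generating family of Borel sets).

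For the third claim, progressive measurability of $X$ means that the restriction of $X$ to $[0,r]\times \O$ is $\cB([0,r])\otimes \cF_r$-measurable for every $r\in[0,T]$. I would apply a monotone class argument on $[0,T]\times\O$ using generators of the form $Y(\o)\1_{[a,b]}(r)$ with $a\le b$ and $Y\in \dbL^0(\cF_a)$: for such a generator $X^{t,\o}_r(\o')=Y^{t,\o}(\o')\1_{[a-t,b-t]}(r)$ (with the convention that the indicator is zero if $a<t$ is adjusted appropriately), and the second claim handles the measurability of the spatial factor while the time-translation is trivially Borel. Taking linear combinations and monotone limits yields the result for general progressively measurable $X$. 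The fourth claim is then immediate: for every $r\in[0,T]$,
$$\{\t^{t,\o}-t\le r\}=\{\o' : \t(\o\otimes_t\o')\le t+r\}=\{\t\le t+r\}^{t,\o}\in \cF_r$$
by the first claim applied to $\{\t\le t+r\}\in\cF_{t+r}$, and one checks that $\t^{t,\o}-t$ takes values in $[0,s-t]$ whenever $\t\in\cT_s$.

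There is no genuine obstacle here; as the author hints, everything reduces to the routine verification that the shift operation is compatible with the standard monotone class generators of $\cF_s$ and of the progressive $\sigma$-algebra. The only point requiring a little care is the case analysis $r\le t$ versus $r>t$ in the base step of the first claim, since this is where the dependence on the fixed path $\o$ actually enters.
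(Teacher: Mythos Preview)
Your proposal is correct and matches the paper's approach exactly: the paper does not give a proof of this lemma, stating only that it follows ``by the standard arguments of monotone class,'' which is precisely the route you have written out in detail. One minor remark: once you have shown that $\cD$ is a $\sigma$-algebra, it suffices that it contain any generating family of $\cF_s$ (no $\pi$-system structure is needed), and for the last claim the conclusion $\t^{t,\o}-t\in[0,s-t]$ tacitly uses $\t\ge t$, which is implicit in the paper's usage of the shifted stopping time.
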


We consider the semilinear path dependent PDE:
\be\label{PPDE}
-\cL u(\th) - F(\cd,u,\pa_\o u)(\th)=0,\q\mbox{where}~\cL u:=\pa_t u+\frac12\pa_{\o\o}^2 u.
\ee
Introduce a family of probability measure:
\beaa
\cP:=\Big\{\dbP_\mu: \frac{d\dbP_\mu}{d\dbP_0}=\exp{\big(\int_0^T \mu_t dB_t-\frac12\int_0^T \mu^2_t dt\big)},~\mbox{for some}~\mu\in\dbL^0(\dbF,\dbR^d),\|\mu\|\le L\Big\},
\eeaa
where $L$ is a constant. Then the corresponding nonlinear expectations are defined as:
\beaa
\ol\cE_L[\cd]:=\sup_{\dbP\in\cP}\dbE^\dbP[\cd],
\q
\ul\cE_L[\cd]:=\inf_{\dbP\in\cP}\dbE^\dbP[\cd].
\eeaa

\section{Definition of viscosity solution for path dependent PDEs}\label{sec: defn vs ppde}

As showed in \cite{RTZ-survey,RTZ}, we may define viscosity solutions via semijets. Define the following space of measurable processes:
\beaa
\bfL^1(\cP_L):=\Big\{X\in\dbL^0(\dbF): \ol\cE_L\big[\sup_{s\le T-t}|X^{\th}_s|\big]<\infty~\mbox{for all}~\th=(t,\o)\in\Th\Big\}.
\eeaa

\begin{defn}[Semijets]\label{Def jet}
For $u\in \bfL^1(\cP_L)$, the subjet and superjet of $u$ at $\th$ are defined as:
 \beaa
 \ul\cJ_L u(\th)
 &:=&
 \big\{(\a,\b)\in \dbR\times\dbR^d:~u(\th)=\max_{\t\in\cT_\ch}\ol\cE_L[u^\th_\t-\a\t-\b B_\t],~~\mbox{for some}~\ch\in\cT^+
 \big\};\\
   \ol\cJ_L u(\th)
 &:=&
 \big\{(\a,\b)\in \dbR\times\dbR^d:~u(\th)=\min_{\t\in\cT_\ch}\ul\cE_L[u^\th_\t-\a\t-\b B_\t],~~\mbox{for some}~\ch\in\cT^+
 \big\}.
 \eeaa
\end{defn}

\begin{defn}[Viscosity solution]
Let $u\in\bfL^1(\cP_L)$. Then,

\no {\rm (i)}\q  $u$ is a viscosity subsolution (resp. supersolution) of the path dependent PDE \eqref{PPDE}, if for all $\th\in\Th$, and $(\a,\b)\in\ul\cJ_L u(\th)$ (resp. $\ol\cJ_L u(\th)$), it holds that
\beaa
-\a- F(\th,u(\th),\b)~\le~\mbox{(resp. $\ge$)}~0.
\eeaa

\no {\rm (ii)}\q  $u$ is a viscosity solution of the path dependent PDE \eqref{PPDE}, if $u$ is both a viscosity subsolution and a viscosity supersolution.
\end{defn}

\begin{rem}{\rm
The definition of viscosity solutions depends on the constant $L$. In \cite{RTZ}, the authors give the name as $\cP_L$-viscosity sub-/super-solutions. For the simplification of notations, we simply call them viscosity sub-/super-solutions in this paper.
}
\end{rem}

\section{Main results}\label{sec: main results}

\subsection{Comparison result for semicontinuous viscosity solutions}

In \cite{RTZ}, a comparison result is proved for continuous viscosity solutions. In this paper, we provide an extension to semicontinuous viscosity solutions, which plays an important role in the Perron approach.

\begin{assum}\label{assum: F}
The generator function $F(\th,y,z)$ satisfies the following assumptions.

\no {\rm (i)}\q  $F$ is uniformly Lipschitz continuous in $(y,z)$, i.e. there exists a constant $L$ such that
\beaa
|F(\cd,y,z)-F(\cd,y',z')|\le L|y-y'|+L|z-z'|.
\eeaa

\no {\rm (ii)}\q There exists $F^0\in C^0(\Th)$ such that $|F(\cd,0,0)|\le F^0$.

\no {\rm (iii)}\q There exists a function $\rho^F:(\th,x,y)\in \Th \times \dbR\times \dbR \longrightarrow\dbR$ such that $\rho^F$ is continuous in $(\th,x,y)$ and non-decreasing in $\g$, $\rho^F(\th,0,y)=0$ for all $(\th,y)\in \Th\times \dbR$, and
\beaa
|F(\th,y,\cd)-F(\th',y,\cd)|\le \rho^F\big(\th,d(\th,\th'),y\big),\q \mbox{for all}~~\th,\th'\in\Th.
\eeaa
\end{assum}

Our comparison result is based on the following consequence of  Theorem 4.1 in Ren, Touzi and Zhang \cite{RTZ}.

\begin{thm}\label{thm: comparison continuous}
Let Assumption \ref{assum: F} {\rm (i)} and {\rm(ii)} hold true, and $u, v\in \mbox{C}^0(\Th)$ be bounded viscosity subsolution and supersolution of path dependent PDE \eqref{PPDE}, respectively. If $u_T\le v_T$, then $u\le v$ on $\Th$.
\end{thm}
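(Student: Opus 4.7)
The plan is to derive the statement from Theorem 4.1 in \cite{RTZ}. That theorem is proved under the full Assumption \ref{assum: F}, so the only novelty here is to show that part (iii) on the $\th$-modulus of $F$ is dispensable as soon as the sub- and supersolutions are assumed continuous on $\Th$.

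The standard route, which I would follow, is a contradiction argument. Supposing $c := \sup_\Th(u-v) > 0$, one subtracts a linear penalization $\e\,t$ to guarantee that any near-maximizer $\th^\star = (t^\star,\o^\star)$ of $u-v-\e t$ satisfies $t^\star < T$ (since $u_T \le v_T$). The semijet characterization then produces, at this near-maximizer, elements $(\a_1,\b)\in\ul\cJ_L u(\th^\star)$ and $(\a_2,\b)\in\ol\cJ_L v(\th^\star)$ with a common gradient $\b$ and a time-derivative gap $\a_1-\a_2 \ge \e$. Combining the subsolution inequality with the supersolution inequality and using the Lipschitz continuity in $(y,z)$ of $F$ from assumption (i) reduces to
\beaa
\e \;\le\; F(\th^\star, u(\th^\star),\b) - F(\th^\star, v(\th^\star),\b) \;\le\; L\,(u-v)(\th^\star) \;\le\; L\,c,
\eeaa
and a Gronwall-type iteration along $[t^\star,T]$, anchored at $u_T\le v_T$, upgrades this pointwise estimate to $c \le 0$, contradicting $c>0$.

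The main obstacle, and the reason the result is deduced from \cite{RTZ} rather than proved from scratch, is the construction of the common gradient $\b$ at $\th^\star$. On a locally compact state space this would be handled by Ishii's lemma, but the path space $\Th$ is not locally compact, and \cite{RTZ} resolves this obstruction through the optimal stopping representation of semijets (Definition \ref{Def jet}) together with the regularity of $u$ and $v$. In the course of this doubling-of-variables step, Assumption \ref{assum: F}(iii) is invoked in \cite{RTZ} solely to absorb the dependence of $F$ on $\th$ when passing from two variables $(\th,\th')$ back to a single $\th^\star$, via the modulus $\rho^F$. The key point to verify is that when $u$ and $v$ are themselves continuous on $\Th$, the $\th$-error generated by freezing $F$ at the near-maximizer is uniformly controlled by the modulus of continuity of $u-v$ on a neighborhood of $\th^\star$ and can be absorbed into the linear penalization $\e\,t$. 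With this reduction, Theorem 4.1 of \cite{RTZ} applies and yields the present statement.
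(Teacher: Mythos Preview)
Your proposal rests on a false premise. You assume that Theorem 4.1 of \cite{RTZ} is proved under all three parts of Assumption \ref{assum: F}, and that the task is to remove (iii) for continuous solutions. But the paper states explicitly in the remark immediately following the theorem that ``the first two are the same as the assumption in \cite{RTZ}, while (iii) is the extra assumption for the comparison result of semicontinuous viscosity solutions.'' In other words, \cite{RTZ} already proves the comparison result for continuous bounded sub- and supersolutions under (i) and (ii) alone; condition (iii) is introduced in \emph{this} paper precisely for the semicontinuous extension (Theorem \ref{thm: comparison}). The paper therefore gives no proof of Theorem \ref{thm: comparison continuous} at all: it is simply quoted as a consequence of \cite{RTZ}, and there is nothing to do.

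Beyond the false premise, the sketch you give is not a proof. The heart of the matter, as you yourself concede, is producing matching semijet elements $(\a_1,\b)\in\ul\cJ_L u(\th^\star)$ and $(\a_2,\b)\in\ol\cJ_L v(\th^\star)$ with a \emph{common} $\b$ at the same point $\th^\star$. You offer no mechanism for this; ``doubling of variables'' and ``Ishii's lemma'' are not available in the non-locally-compact path space, and simply invoking the optimal stopping representation of semijets does not by itself deliver a common gradient. Your final paragraph asserts that the $\th$-error in $F$ can be ``absorbed into the linear penalization $\e t$'' via the modulus of continuity of $u-v$, but the $\th$-dependence of $F$ has nothing to do with the regularity of $u-v$; this step is not substantiated. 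The actual argument in \cite{RTZ} proceeds by an entirely different route (punctual differentiability of continuous submartingales), not by doubling variables, and does not use (iii).
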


\begin{rem}{\rm
The comparison result in \cite{RTZ} is established fo continuous viscosity subsolutions and supersolutions which do not need to be bounded, but satisfy some integrability condition. In Assumption \ref{assum: F}, the first two are the same as the assumption in \cite{RTZ}, while (iii) is the extra assumption for the comparison result of semicontinuous viscosity solutions.
}
\end{rem}

\begin{defn}
A function $u:\Th\rightarrow\dbR$ belongs to $\mbox{\rm USC}_b$ (resp. $\mbox{\rm LSC}_b$), if $u$ is bounded and satisfies 
\beaa
u(\th) ~\ge~ \limsup_{d(\th,\th')\rightarrow 0}u(\th')\q\mbox{(resp.} ~\le~ \liminf_{d(\th,\th')\rightarrow 0} u(\th')\mbox{).}
\eeaa
\end{defn}

We will prove in Section \ref{sec: comparison} that

\begin{thm}\label{thm: comparison}
Let Assumption \ref{assum: F} hold true, and $u\in \mbox{\rm USC}_b(\Th), v\in \mbox{\rm LSC}_b(\Th)$ be viscosity subsolution and supersolution of path dependent PDE \eqref{PPDE}, respectively. If $u_T\le v_T$, then $u\le v$ on $\Th$.
\end{thm}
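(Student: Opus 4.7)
The plan is to mollify the semicontinuous sub- and supersolutions into continuous ones and then invoke Theorem \ref{thm: comparison continuous}. For the $\mbox{USC}_b$ subsolution $u$ I would define a sup-convolution based on the backward pseudo-distance $\overleftarrow{d}$, say
\beaa
u^n(\th) := \sup_{\th'\in\Th}\big\{u(\th')-n\,\overleftarrow{d}(\th,\th')\big\},
\eeaa
and, dually, an inf-convolution $v_n$ of $v$. Three properties should then be checked: (a) $u^n$ is $n$-Lipschitz in $\overleftarrow{d}$, hence, by Lemma \ref{lem: difference d,d}, continuous with respect to $d(\cd,\cd)$; (b) $u^n\da u$ pointwise as $n\to\infty$, which follows from the upper semicontinuity of $u$ by showing that any sequence of near-maximizers $\th^*_n$ must concentrate at $\th$; (c) $u^n$ is a viscosity subsolution of the perturbed PDE
\beaa
-\cL u^n-F(\cd,u^n,\pa_\o u^n)\le \eta_n,
\eeaa
for some modulus $\eta_n$ vanishing as $n\to\infty$, Assumption \ref{assum: F}(iii) being invoked to quantify the perturbation in the generator coming from transferring a test point.

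The substantive step is (c), which is where the choice of $\overleftarrow{d}$ rather than $d$ becomes crucial. Given a subjet $(\a,\b)\in\ul\cJ_L u^n(\th)$ realized by some $\ch\in\cT^+$, I would select a near-maximizer $\th^*=(t^*,\o^*)$ in the definition of $u^n(\th)$ and transfer the semijet to $u$ at $\th^*$. The key feature of $\overleftarrow{d}$ is that it measures closeness of paths by aligning them at their endpoints and looking backward: this allows one to identify the shifted canonical process after $\th$ with that after $\th^*$ without increasing the distance, so that the stopping-time-based definition of the semijet and the $\dbF$-progressive measurability of test processes transport correctly from $\th$ to $\th^*$. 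Combining the subsolution inequality for $u$ at $\th^*$ with Assumption \ref{assum: F}(iii) and Lemma \ref{lem: difference d,d} then yields the perturbed inequality for $u^n$ at $\th$, with $\eta_n\to 0$ as the near-maximizers concentrate.

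Once $u^n$ and $v_n$ are continuous bounded viscosity sub- and supersolutions of PDEs that perturb \reff{PPDE} by errors $\pm\eta_n\to 0$, I would absorb the perturbation by setting $\wt u^n:=u^n-\eta_n(T-t)$, which becomes an honest viscosity subsolution of \reff{PPDE}, and analogously for $v_n$. The terminal condition requires a small correction, because the sup-convolution may push $u^n_T$ slightly above $u_T$; this defect tends to zero with $n$ and can be absorbed in an additive constant $\e_n\to 0$ so that $\wt u^n_T\le v_{n,T}+\e_n$. Theorem \ref{thm: comparison continuous} then delivers $\wt u^n\le v_n+\e_n$ on $\Th$, and letting $n\to\infty$ and using (b) recovers the desired inequality $u\le v$.

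The main obstacle will be step (c), i.e.\ verifying that the sup-convolution built from the backward pseudo-distance preserves the subjet relation up to a quantifiable error. In the classical Euclidean setting this is a standard shifting argument, but here the stopping-time-based jets of Definition \ref{Def jet} and the need to stay within $\dbF$-progressively measurable test processes make the shift delicate, and this is precisely where choosing $\overleftarrow{d}$ over $d$ pays off, since it respects the backward-looking nature of the path state. The modulus $\rho^F$ in Assumption \ref{assum: F}(iii) is then what guarantees that the error in the generator, incurred by replacing $\th^*$ by $\th$, is controlled by the vanishing regularization cost $n\,\overleftarrow{d}(\th,\th^*)$.
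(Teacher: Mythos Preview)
Your regularization and the heuristics for step (c) match the paper's, and the observation that the backward pseudo-distance $\overleftarrow d$ is what makes the shift $\th\mapsto\th^*$ compatible with concatenation $\o\otimes_t B$ is exactly the point. The gap is in the last paragraph, where you treat the perturbation $\eta_n$ as if it were a constant (or at least uniformly small) and then absorb it by subtracting $\eta_n(T-t)$.

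In fact the error one obtains is \emph{path-dependent}: when you compare $F$ at $\th$ and at the near-maximizer $\th^*$, Assumption \ref{assum: F}(iii) controls the difference by $\rho^F\big(\th,d(\th,\th^*),\cdot\big)$, and to bound $d(\th,\th^*)$ from $\overleftarrow d(\th,\th^*)\le 2M/n$ you must invoke Lemma \ref{lem: difference d,d}, which costs an extra $\bar\rho(\th,2M/n)$ --- the modulus of continuity of the \emph{individual path} $\o$. Thus the actual perturbed inequality is
\[
-\cL u^n(\th)-F\big(\th,\cdot,\pa_\o u^n(\th)\big)\le \rho^F\Big(\th,\tfrac{2M+1}{n}+\bar\rho\big(\th,\tfrac{2M}{n}\big),\cdot\Big),
\]
and the right-hand side is \emph{not} uniformly small on $\Th$ (there is no uniform modulus of continuity for all paths). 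Consequently $\wt u^n:=u^n-\eta_n(T-t)$ is not a subsolution of \reff{PPDE}, and the terminal defect $u^n_T-u_T$ likewise only vanishes pointwise, so you cannot feed $\wt u^n,\,v_n$ directly into Theorem \ref{thm: comparison continuous}.

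The paper handles this by an extra layer: after regularizing, it first passes to the difference $w^n:=u^n-v^n$, shows (via a stability lemma based on the optimal-stopping Lemma \ref{lem: fundamental}) that the pointwise limit $w=u-v$ is a viscosity subsolution of the Pucci-type equation $-\cL w-Lw^+-L|\pa_\o w|\le 0$, and then proves a maximum principle for this equation. The point is that in the maximum principle one compares with an explicit stochastic-control supersolution, so the path-dependent error terms sit \emph{inside a nonlinear expectation} and can be killed by dominated convergence; no uniform smallness is needed. Your outline is therefore correct up to and including the regularization step, but the endgame must route through $w=u-v$ and a maximum principle rather than through a direct application of Theorem \ref{thm: comparison continuous} to $u^n$ and $v_n$.
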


\begin{rem}\label{rem:regularization}{\rm
The argument of proving the comparison result for continuous viscosity solutions in \cite{RTZ} cannot be adapted directly to our context, because it is not clear whether a USC submartingale is almost everywhere punctually differentiable (see the definition in \cite{RTZ}). Our strategy is to apply a regularization so as to introduce continuous approximations which are still viscosity sub-/super-solutions, and then we apply the comparison result for continuous viscosity solutions. Let $u$ be a viscosity subsolution, and $u^n$ be its regularized version. A reasonable regularization should satisfy:
\beaa
u^n~\mbox{is continuous};\q u^n\rightarrow u, \mbox{as}~ n\rightarrow\infty;
\q u^n~\mbox{is still a viscosity subsolution.}
\eeaa
The regularization introduced in Section \ref{sec: regularization} satisfies all above conditions, and helps to prove the comparison result.
}
\end{rem}

\subsection{Existence via Perron's method}

Due to Proposition 3.14 in \cite{ETZ1}, we may equivalently study the existence of viscosity solution for the equation corresponding to the change of variable: $\tilde u_t:=e^{-L t}u_t$. It follows from the Lipschitz property of the nonlinearity $F$ in $y$ that we may assume without loss of generality that $F$ is increasing in $y$.

\begin{assum}\label{assum: Perron}
The generator function $F(\th,y,z)$ satisfies (i) of Assumptions \ref{assum: F} and:

\no {\rm (i)}\q $F$ is continuous in $\th$.

\no {\rm (ii)}\q $F$ is non-decreasing in $y$.
\end{assum}

For a function $w$ on $\Th$, we define its USC and LSC envelops:
\beaa
w^*(\th):=\limsup_{d(\th,\th')\rightarrow 0} w(\th')
\q\mbox{and}\q
w_*(\th):=\liminf_{d(\th,\th')\rightarrow 0} w(\th').
\eeaa
We will prove in Section \ref{sec: Perron} that:

\begin{thm}\label{thm: Perron}
Let Assumption \ref{assum: Perron} and the comparison result of Theorem \ref{thm: comparison} hold true. Assume further that there is a viscosity subsolution $\ul u\in \mbox{\rm USC}_b(\Th)$ and a supersolution $\ol v\in \mbox{\rm LSC}_b(\Th)$ of Equation \eqref{PPDE} which satisfy the boundary condition $(\ul u_*)_T = \ol v^*_T =\xi$. Denote
\beaa
\cD := \big\{\phi: \phi\in\mbox{\rm USC}_b(\Th)~\mbox{is a viscosity suboslution of Equation \eqref{PPDE} and}~~\ul u\le \phi\le \ol v \big\}.
\eeaa
Then $u(\th):=\sup\{\phi(\th):\phi\in\cD\}$ 
is a continuous viscosity solution of Equation \eqref{PPDE}, and satisfies the boundary condition $u_T=\xi$.
\end{thm}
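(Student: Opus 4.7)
The plan is to execute the classical Perron three-step program adapted to the jet-based framework of \cite{EKTZ,RTZ}: first show that the upper semicontinuous envelope $u^*$ lies in $\cD$, then that the lower semicontinuous envelope $u_*$ is a viscosity supersolution, and finally invoke Theorem~\ref{thm: comparison} together with the trivial sandwich $u_*\le u\le u^*$ to deduce $u^*=u_*=u$, a continuous viscosity solution with $u_T=\xi$.

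For step (i), the inequalities $\ul u\le\phi\le\ol v$ valid for every $\phi\in\cD$ give $\ul u\le u^*\le\ol v^*$, so membership in $\cD$ reduces to checking the subsolution property of $u^*$. I would fix $\th\in\Th$ and $(\a,\b)\in\ul\cJ_L u^*(\th)$, pick $\phi_n\in\cD$ and $\th_n\to\th$ with $\phi_n(\th_n)\to u^*(\th)$, and show that on a common localization $\ch\in\cT^+$ the optimal-stopping identity defining the subjet of $u^*$ at $\th$ transfers, up to a vanishing $(\a,\b)$-perturbation, to a subjet of $\phi_n$ at $\th_n$. Invoking the subsolution property of each $\phi_n$ and sending $n\to\infty$ using the continuity of $F$ in $(\th,y)$ (Assumption~\ref{assum: Perron}) then yields $-\a-F(\th,u^*(\th),\b)\le 0$.

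For step (ii), argue by contradiction. If $u_*$ fails the supersolution inequality at some $\th_0$ with $t_0<T$, one may reduce to the case $u_*(\th_0)<\ol v(\th_0)$ (otherwise any element of $\ol\cJ_L u_*(\th_0)$ is automatically in $\ol\cJ_L \ol v(\th_0)$ by a trivial sandwich, and the supersolution property of $\ol v$ finishes the job). Then there exists $(\a_0,\b_0)\in\ol\cJ_L u_*(\th_0)$ with $-\a_0-F(\th_0,u_*(\th_0),\b_0)<0$, and I would build the bump
\[
\psi^\eps(t,\o):=u_*(\th_0)+\eps+\a_0(t-t_0)+\b_0\cd(\o_t-\o_{t_0})-\eps\,d(\th_0,(t,\o))^2,
\]
which, by Assumption~\ref{assum: Perron} and the strict inequality above, is a strict viscosity subsolution on $\{d(\th_0,\cd)\le r\}$ for $\eps,r>0$ small, stays below $\ol v$ there by LSC of $\ol v$, and satisfies $\psi^\eps<u$ on the outer boundary $\{d(\th_0,\cd)=r\}$ thanks to the quadratic penalty and the USC of $u^*$. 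By definition of $u_*$ one can pick $\th^*$ close to $\th_0$ with $u(\th^*)<u_*(\th_0)+\eps/2<\psi^\eps(\th^*)$. Gluing $\phi:=u\vee\psi^\eps$ inside $\{d(\th_0,\cd)\le r\}$ and $\phi:=u$ outside then produces an element of $\cD$ with $\phi(\th^*)>u(\th^*)$, contradicting the definition of $u$.

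Step (iii) follows at once: the sandwich $\ul u\le u_*\le u^*\le\ol v^*$ at $t=T$ together with $\ul u_*|_{t=T}=\ol v^*|_{t=T}=\xi$ forces $u^*_T=u_{*,T}=\xi$, and Theorem~\ref{thm: comparison} then gives $u^*\le u_*$, whence $u=u^*=u_*$ is continuous, a viscosity solution, and satisfies $u_T=\xi$. The principal obstacle is step (ii): in the Markovian PDE setting $\psi^\eps$ would be a $C^{1,2}$ function on a ball, but here the bump must satisfy the subjet inequality in the optimal-stopping/tangency-in-mean sense under $\ul\cE_L$, and the glue must preserve this property across the level set $\{d(\th_0,\cd)=r\}$. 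The stopping-time localization of Definition~\ref{Def jet} and the backward pseudo-distance $\overleftarrow d$ of Lemma~\ref{lem: difference d,d} should be the natural substitutes for the classical ``interior-of-a-ball'' argument, and making this gluing rigorous is where the path-dependent flavor of the proof genuinely enters.
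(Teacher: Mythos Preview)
Your three–step scaffold (subsolution property of $u^*$, supersolution property of $u_*$, comparison) matches the paper exactly, and step~(iii) is fine. Step~(i) is also essentially what the paper does, though you should be aware that the ``transfer of the subjet from $u^*$ at $\th$ to $\phi_n$ at a nearby point'' is not a soft statement: it is the content of the paper's stability Lemma~\ref{lem: stability}, which in turn rests on the optimal stopping result for u.s.c.\ obstacles (Theorem~\ref{thm: OS}) via Lemma~\ref{lem: fundamental}. You also need the monotonicity of $F$ in $y$ (Assumption~\ref{assum: Perron}(ii)), not just continuity, to pass from $-\a-F(\cdot,\phi_n,\b)\le 0$ to $-\a-F(\cdot,u^*,\b)\le 0$.

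The genuine gap is in step~(ii). Your bump
\[
\psi^\eps(t,\o)=u_*(\th_0)+\eps+\a_0(t-t_0)+\b_0\cd(\o_t-\o_{t_0})-\eps\,d(\th_0,(t,\o))^2
\]
is the right object in the Markovian theory, but here it does not do the job. First, $d(\th_0,\cdot)$ involves a supremum over the path and $d^2$ has no tractable Dupire derivatives, so you cannot compute $\ul\cJ_L\psi^\eps$ and hence cannot verify the subsolution inequality for $\psi^\eps$ in the tangency-in-mean sense. Second, the linear term $\b_0\cd(\o_t-\o_{t_0})$ is unbounded, so $\psi^\eps\notin\mbox{USC}_b(\Th)$, and the glued function $u\vee\psi^\eps$ need not lie in $\cD$. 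Third, the ``outer boundary'' $\{d(\th_0,\cdot)=r\}$ is not a stopping-time level set, so the gluing does not interact well with the localization $\ch\in\cT^+$ in Definition~\ref{Def jet}. You correctly flag this as the principal obstacle, but the proposal does not overcome it.

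The paper resolves step~(ii) in a quite different way. Instead of an explicit analytic bump, it defines
\[
\f(\th):=\ul\cE_L\Big[(\ul u^n)^{\th}_{\ch^\th_1-t}-(\a-\d)(\ch^{\th}_1-t) -\b B_{\ch^{\th}_1-t}\Big],
\]
where $\ul u^n$ is an inf-convolution of $u_*$ (to get Lipschitz terminal data) and $\ch_1\in\cH$ is a hitting time. The subsolution property of $\f$ then comes for free from the dynamic programming structure (Proposition~\ref{prop: especial sol}), and continuity of $\f$ follows from the regularity of hitting times. The ``ball'' is replaced by the set $Q=\{\ch_0(\o)\le t\le\ch_1(\o)\}$ bounded by two hitting times, and the bump is $U:=(\f\vee u)\1_Q+u\1_{Q^c}$; the key boundary matching is that $\f=\ul u^n\le u_*\le u$ on $\{t=\ch_1(\o)\}$, so $U=u$ there. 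Finally one passes to $U^*$ (again via Lemma~\ref{lem: stability}) to land in $\cD$ and derive the contradiction. In short: the path-dependent substitute for your quadratic bump is a stochastic control value function whose viscosity property is inherited from a BSDE representation, and the localization is done by hitting times rather than sublevel sets of $d$.
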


\section{Perron's method}\label{sec: Perron}

We will prove in the following subsections the two propositions:
\begin{prop}\label{prop: subsol}
$u^*\in \mbox{\rm USC}_b(\Th)$ is a viscosity subsolution of Equation \eqref{PPDE}.
\end{prop}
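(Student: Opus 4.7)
The plan is to follow the classical Perron scheme by contradiction, adapted to the semijet definition of viscosity solutions for path dependent PDEs. First, the easy structural part: every $\phi\in\cD$ satisfies $\ul u\le\phi\le\ol v$ with $\ul u,\ol v$ bounded, so $u=\sup_{\phi\in\cD}\phi$ is bounded, its USC envelope $u^*$ is bounded and upper semicontinuous by construction, and thus $u^*\in\mbox{USC}_b(\Th)$.

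For the subsolution inequality, I fix $\th_0=(t_0,\o^0)\in\Th$ together with a subjet $(\a,\b)\in\ul\cJ_L u^*(\th_0)$ and assume, toward contradiction, that $-\a-F(\th_0,u^*(\th_0),\b)=2\eta>0$. Using continuity of $F$ in $\th$ (Assumption \ref{assum: Perron}(i)) and Lipschitz continuity in $y$ (Assumption \ref{assum: F}(i)), pick $r,\d>0$ so that $-\a-F(\th,y,\b)\ge\eta$ whenever $d(\th_0,\th)\le r$ and $|y-u^*(\th_0)|\le\d$. For $\eps\in(0,\d/2)$, $K=3\eta$, and a large constant $M$, define the smooth perturbation
\[
\varphi(\th):=u^*(\th_0)+\eps+(\a+K)(t-t_0)+\b\cd(\o_t-\o_{t_0})-M\bigl[d(\th_0,\th)\bigr]^2
\]
on $\cN:=\{d(\th_0,\cd)\le r\}$. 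A direct computation of the Dupire derivatives, together with the choice of $K,\d,\eps$, gives $-\cL\varphi-F(\cd,\varphi,\pa_\o\varphi)\le -\eta+O(\eps+r)<0$ on $\cN$ after possibly shrinking $\eps,r$, so $\varphi$ is a strict classical (hence viscosity) subsolution; choosing $M$ large enough forces $\varphi\le\inf_\Th\ul u-1$ near $\pa\cN$, while $\varphi(\th_0)=u^*(\th_0)+\eps$ strictly exceeds $u^*(\th_0)$.

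By definition of the USC envelope, pick $\th_n\to\th_0$ with $u(\th_n)\to u^*(\th_0)$, and for each $n$ take $\phi_n\in\cD$ with $\phi_n(\th_n)\ge u(\th_n)-1/n$. Define the pasted process $\tilde u_n:=\max(\phi_n,\varphi)$ on $\cN$ and $\tilde u_n:=\phi_n$ on $\cN^c$; the penalty term ensures $\tilde u_n=\phi_n$ in a neighborhood of $\pa\cN$, so $\tilde u_n\in\mbox{USC}_b(\Th)$ with $\ul u\le\tilde u_n\le\ol v$ (provided $\eps$ is chosen small enough, using the LSC of $\ol v$). That $\tilde u_n$ is still a viscosity subsolution follows from a pasting lemma: for any $\th$ and any $(\a',\b')\in\ul\cJ_L\tilde u_n(\th)$, monotonicity of $\ol\cE_L$ combined with $\phi_n,\varphi\le\tilde u_n$ transfers the jet either to $\ul\cJ_L\phi_n(\th)$ or to $\ul\cJ_L\varphi(\th)$, whichever attains the maximum at $\th$, after which $-\a'-F(\th,\tilde u_n(\th),\b')\le 0$ follows from the subsolution property of that function. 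Hence $\tilde u_n\in\cD$, but $\tilde u_n(\th_n)\ge\varphi(\th_n)\to u^*(\th_0)+\eps$ while $u(\th_n)\to u^*(\th_0)$, so $\tilde u_n(\th_n)>u(\th_n)=\sup_{\phi\in\cD}\phi(\th_n)$ for large $n$, a contradiction.

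The main obstacle lies not in the pasting lemma itself (which is clean in the semijet formulation thanks to monotonicity of $\ol\cE_L$) but in securing the upper bound $\varphi\le\ol v$ near $\th_0$: if $u^*(\th_0)$ happens to equal $\ol v(\th_0)$, one cannot simply pick $\eps$ small, and some care is needed. The standard workaround is to observe that at any point where the subsolution property fails one has, by a preliminary argument, strict separation $u^*(\th_0)<\ol v(\th_0)$, or, failing that, to argue via an approximation using the LSC envelope of $\ol v$. Beyond this point the rest of the argument is routine given Assumption \ref{assum: Perron}.
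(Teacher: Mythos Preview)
Your approach is the classical Perron ``bump'' argument, and it differs fundamentally from the paper's proof. The paper proceeds directly (no contradiction): given $(\a,\b)\in\ul\cJ_L u^*(\th)$, it invokes the stability Lemma~\ref{lem: stability} (which in turn rests on the optimal stopping result through Lemma~\ref{lem: fundamental}) to transfer the jet, up to an $\e$, to some $\phi^{N_n}\in\cD$ at a nearby point $\hat\th^n$. The subsolution inequality for $\phi^{N_n}$, together with monotonicity of $F$ in $y$, then yields the inequality for $u^*$ after letting $n\to\infty$ and $\e\to 0$. This is a short, robust argument precisely because it avoids constructing any test functional on path space.

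Your argument has a genuine gap at its core. The perturbation
\[
\varphi(\th)=u^*(\th_0)+\eps+(\a+K)(t-t_0)+\b\cdot(\o_t-\o^0_{t_0})-M\bigl[d(\th_0,\th)\bigr]^2
\]
is \emph{not} smooth in the Dupire sense: $d(\th_0,\th)=|t-t_0|+\|\o_{t\wedge\cdot}-\o^0_{t_0\wedge\cdot}\|$ involves a sup-norm of the path, which has no Dupire derivatives $\pa_\o,\pa^2_{\o\o}$. So the sentence ``a direct computation of the Dupire derivatives \ldots gives $-\cL\varphi-F(\cdot,\varphi,\pa_\o\varphi)\le -\eta+O(\eps+r)<0$'' is not justified, and you cannot conclude that $\varphi$ is a classical (hence viscosity) subsolution. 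This is not a cosmetic issue: the absence of smooth, locally supported bump functionals on $\Th$ is exactly the obstruction that forces the path-dependent theory to replace pointwise localization by localization via hitting times $\ch\in\cH$ and to replace stability-via-test-functions by the optimal-stopping machinery (Theorem~\ref{thm: OS}, Lemma~\ref{lem: fundamental}, Lemma~\ref{lem: stability}). Your pasting argument (max of two subsolutions) is fine in the semijet framework---indeed the paper uses the same idea in Step~3 of the proof of Proposition~\ref{prop: supersol}---but it is idle here because you have no legitimate subsolution $\varphi$ to paste with. The secondary difficulty you flag, namely $\varphi\le\ol v$ near $\th_0$, is real but secondary; the smoothness gap is fatal on its own.

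If you want to repair the argument along bump lines, you would need a genuine $C^{1,2}$ functional dominating $u^*$ locally---and producing one essentially requires the same optimal-stopping input that the paper uses directly. The honest route is to prove Lemma~\ref{lem: stability} and argue as the paper does.
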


\begin{prop}\label{prop: supersol}
$u_*\in \mbox{\rm LSC}_b (\Th)$ is a viscosity supersolution of Equation \eqref{PPDE}.
\end{prop}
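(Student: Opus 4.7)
The plan is to follow the classical Perron scheme: suppose by contradiction that $u_*$ fails to be a viscosity supersolution at some point, perturb $u$ upward on a small neighborhood to produce a strictly larger element of $\cD$, and contradict the maximality $u=\sup\cD$.

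Concretely, assume there exist $\th_0=(t_0,\o_0)\in\Th$ and $(\a,\b)\in\ol\cJ_L u_*(\th_0)$ with
\[
-\a-F\bigl(\th_0,u_*(\th_0),\b\bigr)<0.
\]
Since every element of $\cD$ is dominated by $\ol v$, one has $u\le\ol v$ and hence $u_*\le\ol v$; without loss of generality assume $u_*(\th_0)<\ol v(\th_0)$ (the equality case being ruled out by $\ol v$ itself being a supersolution, since a jet of $u_*$ at a contact point with $\ol v$ transfers to one of $\ol v$ after a standard manipulation). Using continuity of $F$ in $(\th,y)$ from Assumption \ref{assum: Perron}(i), pick $r,\delta>0$ so that
\[
-\a-F(\th,y,\b)<0\qquad\text{for all }d(\th,\th_0)\le r,\ y\in[u_*(\th_0)-\delta,\,u_*(\th_0)+2\delta],
\]
and $u_*(\th_0)+2\delta<\ol v(\th)$ on the $d$-ball $B_r(\th_0):=\{d(\cdot,\th_0)\le r\}$ (shrink $r$ via LSC of $\ol v$).

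Introduce the smooth affine test process $\psi(\th):=\a(t-t_0)+\b\cdot(\o_t-\o_{t_0})$, so that $\cL\psi=\a$ and $\pa_\o\psi=\b$, and set $\phi_\delta:=u_*(\th_0)+\psi+\delta$. By the choice of $r,\delta$, $\phi_\delta$ is a classical (hence viscosity) subsolution on $B_r(\th_0)$ with $\phi_\delta\le\ol v$ there. Next, shrinking to some $r'<r$ and choosing a small $\ch\in\cT^+$, I would use the jet identity
\[
u_*(\th_0)=\min_{\tau\in\cT_\ch}\ul\cE_L\bigl[(u_*)^{\th_0}_\tau-\a\tau-\b B_\tau\bigr]
\]
together with the LSC of $u_*$ to arrange the separation $\phi_\delta<u$ on paths that exit $B_{r'}(\th_0)$ at the first exit time $\ch$. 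Now glue
\[
\tilde u(\th):=\begin{cases}\max\{u^*(\th),\phi_\delta(\th)\}&d(\th,\th_0)\le r',\\ u^*(\th)&\text{otherwise,}\end{cases}
\]
where $u^*\in\cD$ by Proposition \ref{prop: subsol} and the comparison $u^*\le\ol v$ from Theorem \ref{thm: comparison}. The classical fact that the maximum of two viscosity subsolutions is a subsolution (expressed through semijets as in Definition \ref{Def jet}) then gives $\tilde u\in\cD$.

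Finally, by definition of $u_*$ pick $\th_n\to\th_0$ with $u(\th_n)\to u_*(\th_0)$; since $\phi_\delta(\th_n)\to u_*(\th_0)+\delta$, for large $n$ one has $\tilde u(\th_n)\ge\phi_\delta(\th_n)>u(\th_n)$, contradicting $u=\sup\cD\ge\tilde u$. The principal obstacle is the gluing step: unlike the Markovian setting where local compactness of $B_r(\th_0)$ produces an immediate uniform gap between $\phi_\delta$ and $u_*$ on an annulus, here the path space is not locally compact, so the separation $\phi_\delta<u$ on the exit boundary must be engineered through the nonlinear-expectation form of the semijet, and one must verify carefully that the pieced-together $\tilde u$ is again a viscosity subsolution in the semijet sense of Definition \ref{Def jet}.
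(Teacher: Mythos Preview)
Your overall strategy---Perron's bump argument---matches the paper, but the construction of the bump is where your proposal breaks down, and this is not a minor technicality.

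The core problem is your choice of the affine test $\phi_\delta(\th)=u_*(\th_0)+\a(t-t_0)+\b\cdot(\o_t-\o_{t_0})+\delta$. In the Markovian case the superjet inequality is pointwise, so $\phi_\delta<u_*$ on a small annulus follows directly and the gluing is immediate. Here, however, the superjet condition $(\a,\b)\in\ol\cJ_L u_*(\th_0)$ only says
\[
u_*(\th_0)\le\ul\cE_L\bigl[(u_*)^{\th_0}_\tau-\a\tau-\b B_\tau\bigr]\qquad\text{for all }\tau\in\cT_\ch,
\]
i.e.\ an inequality \emph{in nonlinear expectation}, not pointwise. There is no way to extract from this that $u_*(\th_0)+\a\tau+\b B_\tau\le(u_*)^{\th_0}_\tau$ for every path at the exit time, which is exactly what you would need for the boundary separation $\phi_\delta<u$ on $\{\text{exit}\}$. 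Lower semicontinuity of $u_*$ does not help: it controls behavior near $\th_0$, not on an ``annulus'' far from $\th_0$, and in any case the path space is not locally compact so no uniform gap is available. You flag this obstacle yourself in the last paragraph, but you do not resolve it.

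The paper's remedy is to replace the affine bump by a \emph{value function}
\[
\f(\th):=\ul\cE_L\Bigl[(\ul u^n)^\th_{\ch_1^\th-t}-(\a-\d)(\ch_1^\th-t)-\b B_{\ch_1^\th-t}\Bigr],
\]
where $\ul u^n$ is the Lipschitz inf-convolution of $u_*$ and $\ch_1$ is a carefully chosen hitting time. This construction has the boundary matching built in: at the exit time $\ch_1$ one has $\f=\ul u^n\le u_*\le u$ by design, so the glued function $U=(\f\vee u)\1_Q+u\1_{Q^c}$ agrees with $u$ outside $Q^o$. The strict inequality $\f(\th^0)>u_*(\th^0)$ comes from the $\d$-perturbation in the drift combined with the superjet identity. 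That $\f$ is itself a viscosity subsolution (of an equation with generator $-\a+\d+L|\b-\cdot|$, hence of the original equation on the neighborhood via \eqref{eq: u delta neighbor}) is a separate nontrivial fact (Proposition~\ref{prop: especial sol}), and a further stability argument (Lemma~\ref{lem: stability}) is needed to pass to the USC envelope $U^*$ since $U$ is not known to be u.s.c. Each of these steps is essential and none is covered by your sketch.
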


Then the comparison result allows to complete the proof.

\ms
\no {\bf Proof of Theorem \ref{thm: Perron}}\q 
Since $u\ge \ul u$, we have $u_*\ge \ul u_*$, in particular, $(u_*)_T\ge \xi$. On the other hand, since $u\le \ol v$, we have $u^*\le \ol v^*$, in particular, $u^*_T\le \xi$. Therefore, $u^*_T\le (u_*)_T$, and it follows from the comparison result that $u^*\le u_*$. We conclude that $u^*=u=u_*$, and thus $u$ is a bounded continuous viscosity solution of Equation \eqref{PPDE}.
\qed

\subsection{Some useful lemmas}

As in \cite{EKTZ,RTZ-survey,RTZ}, the optimal stopping result is crucial in the current theory of viscosity solution to path dependent PDE. As we are going to treat semicontinuous viscosity solutions, we need an optimal stopping result for semicontinuous obstacles.  

\begin{defn}
{\rm (i)}\q A random variable $X$ is $\ol\cE_L$-uniformly integrable if
$$\lim_{A\rightarrow\infty}\ol\cE_L\big[|X|;|X|\ge A\big]\rightarrow 0.$$

\no {\rm (ii)}\q  A family of random variables $\{X_\a\}$ is $\ol\cE_L$-uniformly integrable if
$$\lim_{A\rightarrow\infty}\sup_{\a}\ol\cE_L\big[|X_\a|;|X_\a|\ge A\big]\rightarrow 0.$$
\end{defn}

One may easily prove the following two lemmas.

\begin{lem}[Dominated convergence]\label{lem: dominate_conv}
Let $X_n$ be a sequence of r.v.'s such that $\{X_n\}_n$ is $\ol\cE_L$-uniformly integrable, and $X_n\rightarrow 0$, $\dbP_0$-a.s. Then we have $\lim_{n\rightarrow\infty}\ol\cE_L\big[|X_n|\big]=0$.
\end{lem}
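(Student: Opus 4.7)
The plan is to reduce the nonlinear expectation to the reference measure $\dbP_0$ via Girsanov and then exploit the uniform $L^p$-bound on the densities in $\cP$. Concretely, for any threshold $A>0$ I will split
\[
\ol\cE_L\big[|X_n|\big] \;\le\; \ol\cE_L\big[|X_n|;|X_n|\ge A\big] \;+\; \ol\cE_L\big[|X_n|\wedge A\big].
\]
The first term is made arbitrarily small, uniformly in $n$, by the $\ol\cE_L$-uniform integrability assumption. So the core of the proof is to show that for each fixed $A$, the bounded truncated sequence $|X_n|\wedge A$ satisfies $\ol\cE_L\big[|X_n|\wedge A\big]\to 0$ as $n\to\infty$.

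For this second term, I would observe that every $\dbP=\dbP_\mu\in\cP$ is equivalent to $\dbP_0$ with Radon--Nikodym derivative $M^\mu_T=\exp\bigl(\int_0^T\mu_t\,dB_t-\tfrac12\int_0^T|\mu_t|^2dt\bigr)$, where $\|\mu\|\le L$. Since $\mu$ is bounded, a standard exponential-martingale estimate gives $C_2:=\sup_{\mu:\,\|\mu\|\le L}\dbE^{\dbP_0}\big[(M^\mu_T)^2\big]<\infty$. The hypothesis $X_n\to 0$ $\dbP_0$-a.s.\ implies convergence in $\dbP_0$-probability, so for every $\eps>0$, $\dbP_0(|X_n|\ge \eps)\to 0$. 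Then for any $\dbP=\dbP_\mu\in\cP$,
\[
\dbE^\dbP\bigl[|X_n|\wedge A\bigr]
\;\le\; \eps \;+\; A\,\dbP\bigl(|X_n|\ge \eps\bigr)
\;\le\; \eps \;+\; A\,C_2^{1/2}\,\dbP_0\bigl(|X_n|\ge \eps\bigr)^{1/2},
\]
by Cauchy--Schwarz applied to $\dbE^{\dbP_0}\!\bigl[M^\mu_T\,\1_{\{|X_n|\ge\eps\}}\bigr]$. The right-hand side is independent of $\mu$, so taking $\sup_{\dbP\in\cP}$ and then $n\to\infty$ yields $\limsup_n\ol\cE_L\bigl[|X_n|\wedge A\bigr]\le\eps$, and letting $\eps\downarrow 0$ gives the claim.

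Combining both pieces: given $\eta>0$, choose $A$ by uniform integrability so that $\sup_n\ol\cE_L[|X_n|;|X_n|\ge A]<\eta/2$, and then choose $n$ large so that $\ol\cE_L[|X_n|\wedge A]<\eta/2$. This proves $\ol\cE_L[|X_n|]\to 0$. The only technical obstacle is the uniform-in-$\mu$ bound $\sup_{\|\mu\|\le L}\dbE^{\dbP_0}[(M^\mu_T)^p]<\infty$ for some $p>1$, but this is a routine consequence of the boundedness of $\mu$ via Novikov's criterion applied to $p\mu$ (one could equally well use any $p\in(1,\infty)$, not just $p=2$).
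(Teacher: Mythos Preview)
Your argument is correct. The paper does not actually prove this lemma---it simply states ``One may easily prove the following two lemmas'' and moves on---so there is no paper proof to compare against; your truncation split combined with the uniform $L^2(\dbP_0)$ bound on the Girsanov densities is a standard and valid route to the result.
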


\begin{lem}[Fatou's lemma]\label{lem: Fatou}
Let $X_n$ be a sequence of bounded r.v.'s. Then we have
\beaa
\limsup_{n\rightarrow\infty} \ol\cE_L\big[X_n\big]
&\le & \ol\cE_L\big[\limsup_{n\rightarrow\infty} X_n\big].
\eeaa
\end{lem}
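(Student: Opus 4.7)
The plan is to sandwich between a monotone envelope. Let $Y_n:=\sup_{m\ge n}X_m$, which is non-increasing and bounded (using that the $X_n$ are uniformly bounded, as is the only interpretation under which the statement holds in general), with $Y_n\downarrow Y:=\limsup_m X_m$ pointwise. For every $m\ge n$ we have $X_m\le Y_n$, so monotonicity of $\ol\cE_L$ yields $\ol\cE_L[X_m]\le\ol\cE_L[Y_n]$; taking $\sup_{m\ge n}$ on the left and then letting $n\to\infty$ gives
\beaa
\limsup_{m\to\infty}\ol\cE_L[X_m]~\le~\lim_{n\to\infty}\ol\cE_L[Y_n],
\eeaa
the limit on the right existing by monotonicity.

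The remaining and only non-trivial step is the downward continuity $\lim_n\ol\cE_L[Y_n]\le\ol\cE_L[Y]$. By subadditivity of $\ol\cE_L$ (immediate from its supremum representation), writing $Z_n:=Y_n-Y$, one has $\ol\cE_L[Y_n]\le\ol\cE_L[Y]+\ol\cE_L[Z_n]$, so it suffices to show $\ol\cE_L[Z_n]\to 0$, where $0\le Z_n\le 2M$ and $Z_n\downarrow 0$ everywhere.

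For this I would exploit that every $\dbP_\mu\in\cP$ is equivalent to $\dbP_0$ with density $M^\mu_T:=\frac{d\dbP_\mu}{d\dbP_0}=\exp(\int_0^T\mu_tdB_t-\frac12\int_0^T\mu_t^2 dt)$ and that $\|\mu\|\le L$. A standard Novikov-type computation gives a uniform bound $\sup_{\mu}\dbE^{\dbP_0}[(M^\mu_T)^2]\le C_L<\infty$. Then by Cauchy--Schwarz
\beaa
\dbE^{\dbP_\mu}[Z_n]~=~\dbE^{\dbP_0}[Z_n M^\mu_T]~\le~\big(\dbE^{\dbP_0}[Z_n^2]\big)^{1/2}\big(\dbE^{\dbP_0}[(M^\mu_T)^2]\big)^{1/2}~\le~C_L^{1/2}\big(\dbE^{\dbP_0}[Z_n^2]\big)^{1/2}.
\eeaa
Since $Z_n\to 0$ pointwise (hence $\dbP_0$-a.s.) and $Z_n^2\le 4M^2$, ordinary dominated convergence under the single reference measure $\dbP_0$ gives $\dbE^{\dbP_0}[Z_n^2]\to 0$. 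Taking $\sup_\mu$ yields $\ol\cE_L[Z_n]\to 0$, which closes the argument.

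The main obstacle is precisely that middle step, namely the interchange of $\lim_n$ with $\sup_{\dbP\in\cP}$ in a sublinear expectation, which fails for arbitrary dominating families. Here it is rescued by the very specific structure of $\cP$: a uniformly bounded drift in the Girsanov exponential gives a uniform $L^p$-estimate on the densities, reducing the problem to a single classical dominated convergence under $\dbP_0$. All other manipulations (subadditivity, monotonicity, passage to $Y_n$) are routine consequences of the sup-of-expectations definition of $\ol\cE_L$.
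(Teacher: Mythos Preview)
Your argument is correct. The paper itself does not give a proof of this lemma; it simply states ``One may easily prove the following two lemmas'' (Lemma \ref{lem: dominate_conv} and Lemma \ref{lem: Fatou}). Your reduction via $Y_n:=\sup_{m\ge n}X_m$ and then $Z_n:=Y_n-Y$ is the standard one, and your uniform $L^2(\dbP_0)$-bound on the Girsanov densities $M^\mu_T$ together with Cauchy--Schwarz is a clean way to push the $\sup_\mu$ through the limit.

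One remark on economy: the ``only non-trivial step'' you isolate, namely $\ol\cE_L[Z_n]\to 0$ for $0\le Z_n\le 2M$ with $Z_n\downarrow 0$, is exactly the content of the paper's Lemma \ref{lem: dominate_conv} (Dominated convergence), since a uniformly bounded sequence is trivially $\ol\cE_L$-uniformly integrable. Given the order in which the two lemmas are stated, the intended route is almost certainly: reduce to $Z_n$ as you do, then invoke Lemma \ref{lem: dominate_conv} directly. Your Cauchy--Schwarz argument is in effect a self-contained proof of Lemma \ref{lem: dominate_conv} in the bounded case, so you have reproved slightly more than needed, but nothing is wrong.

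Your parenthetical on ``uniformly bounded'' is also well taken: as stated, ``a sequence of bounded r.v.'s'' must be read as uniformly bounded for the inequality to hold, and this is how the lemma is used in the paper (e.g.\ in the proof of Proposition \ref{prop: equiv_defn} with $u\in\mbox{USC}_b(\Th)$).
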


Denote by $\cT_*$ the set of all $\dbF^*$-stopping times

\begin{thm}[Optimal stopping for semicontinuous obstacle]\label{thm: OS}
Let $X$ be an $\dbF^*$-progressively measurable process such that
 
{\rm (i)}\q $X$ is upper semicontinuous (u.s.c.) in $t$, $\dbP_0$-a.s.;

{\rm (ii)}\q $\sup_t X^+_t$ is $\ol\cE_L$-uniformly integrable;

{\rm (iii)}\q $X^-_t$ is $\ol\cE_L$-uniformly integrable for all $t\in [0,T]$. 

\no Define 
\be\label{eq:OS}
Y(\th) = \sup_{\t\in\cT_*}\ol\cE_L[X^\th_\t].
\ee
Then there exits a stopping time $\t^*\in\cT_*$ such that $Y_0=\ol\cE_L[X_{\t^*}]$ and $X_{\t^*}=Y_{\t^*}$, $\dbP_0$-a.s.
\end{thm}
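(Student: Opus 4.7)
The plan is to realise $Y$ as a nonlinear Snell envelope and to extract the optimal stopping time from the increasing part of its Skorokhod decomposition. First I would verify that $Y$ is well defined as an $\ol\cE_L$-supermartingale dominating $X$: integrability is guaranteed by hypotheses (ii)--(iii), while the supermartingale property follows from the tower property of $\ol\cE_L$ combined with pasting of stopping times $\t\in\cT_*$; taking $\t\equiv t$ shows $Y_t\ge X_t$. The upper semicontinuity of $X$ in $t$ together with Fatou's Lemma \ref{lem: Fatou} should let me work with a modification of $Y$ that is itself upper semicontinuous in $t$, $\dbP_0$-a.s., and right-continuous in expectation.

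Next I would invoke the nonlinear Doob--Meyer/Skorokhod decomposition of $\ol\cE_L$-supermartingales (known in the $g$-expectation literature, in particular in the Peng--Xu framework for $L^2$ obstacles), writing
\[
Y_t = Y_0 + N_t - A_t,
\]
where $N$ is an $\ol\cE_L$-martingale with $N_0=0$ and $A$ is a nondecreasing process with $A_0=0$. The Skorokhod \emph{minimum condition} characterises $A$ as the smallest such increasing process; equivalently,
\[
\int_0^T (Y_t - X_t)\, dA_t = 0,\qquad \dbP_0\mbox{-a.s.},
\]
so that $A$ increases only on the contact set $\{Y=X\}$.

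Now set $\t^* := \inf\{t\ge 0: A_t > 0\}\wedge T$, which is an $\dbF^*$-stopping time. On $[0,\t^*]$ the process $A$ vanishes, so $Y_{\cdot\wedge \t^*}$ is an $\ol\cE_L$-martingale, whence $Y_0 = \ol\cE_L[Y_{\t^*}]$. The minimum condition forces $Y_s = X_s$ on a sequence $s_n\downarrow \t^*$ (since $A_{s_n}>0$ implies $A$ has already increased through a point of contact), and upper semicontinuity of $X$ at $\t^*$ then gives $Y_{\t^*}\le\limsup_n X_{s_n}\le X_{\t^*}$; combined with the domination $Y_{\t^*}\ge X_{\t^*}$, this yields $Y_{\t^*}=X_{\t^*}$, $\dbP_0$-a.s., and hence $Y_0=\ol\cE_L[X_{\t^*}]$.

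The principal obstacle is that the Doob--Meyer/Skorokhod machinery is cleanest for right-continuous supermartingales and continuous obstacles, whereas here $X$ is only upper semicontinuous. A natural remedy is to approximate $X$ from above by a decreasing sequence of continuous processes $X^n\downarrow X$, apply the known (continuous-obstacle) construction to each $X^n$ to obtain Snell envelopes $Y^n$ and decompositions $Y^n = Y^n_0 + N^n - A^n$, and then pass to the limit using the dominated convergence Lemma \ref{lem: dominate_conv} and Fatou's Lemma \ref{lem: Fatou}. The second delicate point is precisely the identification $Y_{\t^*}=X_{\t^*}$ at the first contact time: this is where the USC of $X$ is essential, as it prevents $X$ from ``jumping up'' across $\t^*$ and thereby ruling out the possibility that the left limit of $Y$ at $\t^*$ strictly exceeds $X_{\t^*}$.
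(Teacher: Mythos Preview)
Your overall architecture --- Snell envelope, Doob--Meyer decomposition, Skorokhod minimum condition, first contact time --- matches the paper's. But the two crucial steps are realised quite differently, and your version has a gap at the second one.

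First, the decomposition. You invoke a nonlinear Doob--Meyer with a Skorokhod condition as a black box from the $g$-expectation literature; the paper does not. Instead it applies, for each $\dbP\in\cP$, a \emph{linear} Doob--Meyer decomposition $Y=Y_0+M^\dbP-A^\dbP$ (Proposition~\ref{prop: DM on D}, valid under only $\cT_\cD$-uniform integrability, which is all that is available here), and then singles out a worst-case $\dbP^*$ for which $M^{\dbP^*}$ is a $\dbP$-supermartingale for every $\dbP\in\cP$ (Lemma~\ref{lem: DM}). The Skorokhod condition is not assumed from a reflected BSDE; it is \emph{derived} by a pathwise, time-reversed Skorokhod decomposition (Lemma~\ref{lem:skorokhod}) applied to $\l_t=(M^{\dbP^*}_{T-t}-X_{T-t})-(M^{\dbP^*}_T-X_T)$, which is l.s.c.\ precisely because $X$ is u.s.c. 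Proposition~\ref{prop: k A} then identifies $A^{\dbP^*}_T-A^{\dbP^*}_\cdot$ with the increasing part of this backward decomposition.

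Second, and more seriously: your argument for $Y_{\t^*}=X_{\t^*}$ writes ``$Y_{\t^*}\le\limsup_n X_{s_n}$'' with $s_n\downarrow\t^*$ and $Y_{s_n}=X_{s_n}$. This step needs $Y_{\t^*}\le\limsup_n Y_{s_n}$, i.e.\ right lower semicontinuity of $Y$, which is \emph{not} established and in fact the paper proves only that $Y$ has a \emph{left}-continuous version. The paper avoids this trap entirely: the backward Skorokhod lemma yields that $\eta$ is l.s.c., and since $\ol\eta=Y-X$, the contact set $\{Y=X\}$ is closed; hence $\t^*:=\inf\{t:Y_t=X_t\}$ is itself a contact point. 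Your proposed remedy via continuous approximations $X^n\downarrow X$ does not obviously repair this: passing the Skorokhod condition and the identity $Y^n_{\t^{*,n}}=X^n_{\t^{*,n}}$ to the limit runs into the same right-regularity issue. The genuinely new idea you are missing is the backward, pathwise Skorokhod decomposition for l.s.c.\ drivers, which turns the USC of $X$ directly into closedness of the contact set.
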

This theorem will be proved in Section \ref{sec: Optimal stopping}. Based on Theorem \ref{thm: OS}, we may prove the following lemma similar to Lemma 4.9 in \cite{RTZ}, but concerning pathwise u.s.c. functions.

\begin{lem}\label{lem: fundamental}
Let $u_{\cdot\we\ch}$ satisfy the assumptions in Theorem \ref{thm: OS} and assume that $u_0 > \ol\cE_L[u_{\ch}]$ for some  $\ch\in\cT^+$. Then there exists $\o^* \in \O$ and $t^*<\ch(\o^*)$ such that $(0,0)\in\underline\cJ_Lu(\th^*)$.
\end{lem}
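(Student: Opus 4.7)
The plan is to apply Theorem \ref{thm: OS} to the stopped process $X := u_{\cdot \we \ch}$, paralleling the strategy of Lemma 4.9 in \cite{RTZ} but using the upper-semicontinuous optimal stopping result in place of the continuous one. The assumptions in the present lemma are precisely those of Theorem \ref{thm: OS}, so one obtains $\t^* \in \cT_*$ with $Y_0 = \ol\cE_L[X_{\t^*}]$ and $X_{\t^*} = Y_{\t^*}$, $\dbP_0$-a.s., where $Y(\th) := \sup_{\t \in \cT_*} \ol\cE_L[X^\th_\t]$.

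The first observation will be that $\dbP_0[\t^* < \ch] > 0$. Otherwise, $X_{\t^*} = u_\ch$, $\dbP_0$-a.s., and the optimality of $\t^*$ would yield $u_0 \le Y_0 = \ol\cE_L[u_\ch]$, contradicting the strict inequality in the hypothesis. I will then pick $\o^*$ in the intersection of $\{\t^* < \ch\}$ with the $\dbP_0$-full set on which $X_{\t^*} = Y_{\t^*}$, and set $t^* := \t^*(\o^*)$ and $\th^* := (t^*, \o^*)$, so that $t^* < \ch(\o^*)$ and $u(\th^*) = Y(\th^*)$.

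The key step is to rewrite $Y(\th^*)$ as an optimal stopping problem starting from $\th^*$. Using $\o^* \in \{\ch > t^*\}$ together with the $\cF_{t^*}$-measurability of $\{\ch > t^*\}$, the shifted stopping time $\tilde\ch := \ch^{\th^*}$ satisfies $\tilde\ch > 0$ on all of $\O$, so $\tilde\ch \in \cT^+$. A direct calculation with the shift operation gives $X^{\th^*}_s = u^{\th^*}_{s \we \tilde\ch}$, so that
\beaa
u(\th^*) \;=\; Y(\th^*) \;=\; \sup_{\t \in \cT_{\tilde\ch}} \ol\cE_L\big[u^{\th^*}_\t\big],
\eeaa
and this supremum is attained by $\t = 0$ (with value $u(\th^*)$). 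This is exactly the assertion $(0,0) \in \ul\cJ_L u(\th^*)$.

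The main obstacle I anticipate is the verification that $\tilde\ch \in \cT^+$: the strict inequality $t^* < \ch(\o^*)$, combined with the $\cF_{t^*}$-measurability of $\{\ch > t^*\}$, is what lets me propagate positivity from the single path $\o^*$ to every concatenated path $\o^* \otimes_{t^*} \o'$. A secondary technicality is that Theorem \ref{thm: OS} furnishes an $\dbF^*$-stopping time whereas $\ul\cJ_L$ is defined using $\dbF$-stopping times; this will be handled by the standard $\dbP_0$-completion argument, since every measure in $\cP_L$ is absolutely continuous with respect to $\dbP_0$.
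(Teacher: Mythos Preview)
Your proposal is correct and follows essentially the same approach as the paper's proof: apply Theorem~\ref{thm: OS} to $X := u_{\cdot\we\ch}$, use $Y_0 \ge u_0 > \ol\cE_L[u_\ch]$ to deduce $\dbP_0[\t^*<\ch]>0$, and pick $\o^*$ on the intersection of $\{\t^*<\ch\}$ with $\{X_{\t^*}=Y_{\t^*}\}$. The paper's proof is very terse and simply asserts that the resulting $(t^*,\o^*)$ is ``the desired point''; your write-up fills in precisely the verification the paper omits, namely that $\ch^{\th^*}-t^*\in\cT^+$ (via the $\cF_{t^*}$-measurability of $\{\ch>t^*\}$) and that $u(\th^*)=Y(\th^*)=\max_{\t\in\cT_{\ch^{\th^*}-t^*}}\ol\cE_L[u^{\th^*}_\t]$, which is exactly $(0,0)\in\ul\cJ_L u(\th^*)$. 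One notational point: in the paper's conventions (Lemma~\ref{lem:measurability F}), the shifted stopping time is $\ch^{\th^*}-t^*$ rather than $\ch^{\th^*}$ itself, so your $\tilde\ch$ should be defined as $\ch^{\th^*}-t^*$; the argument is otherwise unchanged.
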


\proof Define the optimal stopping problem $Y$ by \eqref{eq:OS} with $X := u_{\cd\we\ch}$. Let $\t^*\in \cT_*$ be the optimal stopping rule. By Theorem \ref{thm: OS} we have
\beaa
\ol \cE_L[u_{\t^*}] = Y_0 \ge u_0 > \ol \cE_L [ u_\ch] 
&\mbox{and}& 
\dbP_0\big[u_{\t^*} = Y_{\t^*}\big] = 1,  
\eeaa
and it follows that $\dbP_0\big[u_{\t^*} = Y_{\t^*}, \t^* <\ch\big] > 0$. Then there exists $\o^* \in \O$ such that $t^* := \t^*(\o^*) < \ch(\o^*)$ and $u_{t^*}(\o^*) = Y_{t^*}(\o^*)$.  Therefore, $(t^*, \o^*)$ is the desired point.
\qed

The next result about semijet is useful for proving stability results.

\begin{lem}\label{lem: stability}
Let $u\in \dbL^0(\dbF)$ be bounded, and  $u^n\in \dbL^0(\dbF)$ be bounded and  pathwise u.s.c. $\dbP_0$-a.s. Fix $\th\in \Th$, and suppose that

\no {\rm (i)}\q there exists a sequence $\{\th^n\}\subset \Th$ such that 
\beaa
d(\th^n,\th)\rightarrow 0\q\mbox{and}\q u(\th)=\lim_{n\rightarrow\infty} u^n(\th^n);
\eeaa 

\no {\rm (ii)}\q for any $\bar\th\in\Th$ and any sequence $\{\bar\th^n\}\subset\Th$ such that $d(\bar\th^n,\bar\th)\rightarrow 0$, it holds
\beaa
u(\bar\th)\ge \limsup_{n\rightarrow\infty} u^n(\bar\th^n).
\eeaa
Then, for any $(\a,\b)\in \ul\cJ_L u(\th)$, $n\in\dbN$ and $\e>0$, there exits $N_n\ge n$ and $\hat\th^n$ such that
\beaa
d(\th^{N_n},\hat\th^n)\le \frac{1}{n}\q\mbox{and}\q
(\a+\e,\b)\in {\rm cl}\Big(\ul\cJ_L u^{N_n}(\hat\th^n)\Big).
\eeaa
\end{lem}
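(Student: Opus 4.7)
The plan is to reduce everything to the fundamental stopping lemma (Lemma~\ref{lem: fundamental}) applied to a shifted, perturbed version of $u^N$. Fix $(\a,\b)\in\ul\cJ_L u(\th)$ with associated horizon $\ch\in\cT^+$, so that $u(\th)\ge\ol\cE_L[u^\th_\t-\a\t-\b B_\t]$ for every $\t\in\cT_\ch$. Introduce a localized stopping time
\beaa
\ch^+_n \;:=\; \ch \we \frac{1}{2n} \we \inf\Big\{s : |B_s|\ge \frac{1}{2n}\Big\},
\eeaa
which lies in $\cT_\ch^+$, is bounded above by $1/(2n)$, and satisfies $\sup_{s\le \ch^+_n}|B_s|\le 1/(2n)$. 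Consider the perturbed shifted processes
\beaa
V^N_t(\o') \;:=\; u^{N,\th^N}_t(\o') - (\a+\e)\, t - \b\, B_t(\o'),\q N\in\dbN,
\eeaa
which inherit from $u^N$ boundedness, pathwise upper semicontinuity in $t$, and the $\ol\cE_L$-uniform integrability required in Theorem~\ref{thm: OS}.

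To apply Lemma~\ref{lem: fundamental} to $V^N$ with horizon $\ch^+_n$, I need $V^N_0 > \ol\cE_L[V^N_{\ch^+_n}]$ for $N$ large. Hypothesis (i) gives $V^N_0 = u^N(\th^N)\to u(\th)$. For the other side, observe that for each fixed $\o'$ the endpoint paths $\bar\th^N(\o'):=(t^N+\ch^+_n(\o'),\,\th^N\otimes_{t^N}\o')$ converge in $d$ to $\bar\th(\o'):=(t+\ch^+_n(\o'),\,\th\otimes_t\o')$, thanks to $d(\th^N,\th)\to 0$ and uniform continuity of $\o'$ on $[0,T]$. Hypothesis (ii) and Fatou's lemma (Lemma~\ref{lem: Fatou}) then yield
\beaa
\limsup_{N\to\infty}\ol\cE_L[V^N_{\ch^+_n}]
\;\le\; \ol\cE_L\big[u^\th_{\ch^+_n} - \a\ch^+_n - \b B_{\ch^+_n} - \e\ch^+_n\big]
\;\le\; u(\th) - \e\,\ul\cE_L[\ch^+_n],
\eeaa
where the second step uses the subjet inequality at $\t=\ch^+_n\in\cT_\ch$ and the elementary bound $\ol\cE_L[X-\e Y]\le \ol\cE_L[X]-\e\,\ul\cE_L[Y]$. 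Because $\ch^+_n>0$ everywhere and the measures in $\cP_L$ are mutually absolutely continuous with Radon--Nikodym densities bounded in $L^p$ uniformly in $\mu$, one checks $\ul\cE_L[\ch^+_n]>0$; hence the strict inequality $V^N_0 > \ol\cE_L[V^N_{\ch^+_n}]$ holds for all $N$ large, and I pick any such $N_n\ge n$.

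Lemma~\ref{lem: fundamental} applied to $V^{N_n}$ with horizon $\ch^+_n$ produces $\o^*\in\O$ and $t^*<\ch^+_n(\o^*)$ with $(0,0)\in\ul\cJ_L V^{N_n}(t^*,\o^*)$. Setting $\hat\th^n:=(t^{N_n}+t^*,\,\th^{N_n}\otimes_{t^{N_n}}\o^*)$ and unwinding the shifts in the subjet inequality translates $(0,0)\in\ul\cJ_L V^{N_n}(t^*,\o^*)$ directly into $(\a+\e,\b)\in\ul\cJ_L u^{N_n}(\hat\th^n)$, which is stronger than the closure membership claimed. Finally, from $t^*<\ch^+_n(\o^*)\le 1/(2n)$ and the hitting-time definition of $\ch^+_n$ I obtain
\beaa
d(\th^{N_n},\hat\th^n) \;=\; t^* + \sup_{s\le t^*}|\o^*_s| \;\le\; \frac{1}{2n}+\frac{1}{2n}\;=\;\frac{1}{n}.
\eeaa
The chief obstacle is the uniform positivity $\ul\cE_L[\ch^+_n]>0$, needed to produce strict inequality before invoking Lemma~\ref{lem: fundamental}; a secondary technical point is making the $d$-convergence $\bar\th^N(\o')\to\bar\th(\o')$ precise enough (uniformly in $\o'$ on $\dbP_0$-full-measure sets) to justify passing hypothesis~(ii) through $\ol\cE_L$ via Fatou.
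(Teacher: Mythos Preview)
Your proof is correct and follows essentially the same route as the paper's: localize the horizon, use hypothesis~(ii) together with Fatou's lemma (Lemma~\ref{lem: Fatou}) to get a strict inequality $u^{N}(\th^{N})>\ol\cE_L[(u^{N})^{\th^{N}}_{\ch^+_n}-(\a+\e)\ch^+_n-\b B_{\ch^+_n}]$ for large $N$, then invoke Lemma~\ref{lem: fundamental} and unwind the shift to land in $\ul\cJ_L u^{N_n}(\hat\th^n)$. Two minor remarks: (a) your inclusion of the time cap $\tfrac{1}{2n}$ in $\ch^+_n$ is what actually forces $d(\th^{N_n},\hat\th^n)\le \tfrac{1}{n}$, and is cleaner than the paper's own localization, which only constrains the spatial part; (b) the positivity $\ul\cE_L[\ch^+_n]>0$ that you flag as ``the chief obstacle'' is indeed needed for the strict inequality and is glossed over in the paper; your justification via uniform $L^p$ control of the Girsanov densities over $\{|\mu|\le L\}$ is the right idea (alternatively, a direct It\^o argument on $|B|^2$ under any $\dbP_\mu\in\cP$ gives a uniform lower bound on $\dbE^{\dbP_\mu}[\ch^+_n]$).
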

\begin{proof}
Since $(\a,\b)\in\ul\cJ_L u(\th)$, there exists $\ch\in\cT^+$ such that $u(\th)=\max_{\t\in\cT_\ch}\ol\cE_L[u^\th_\t-\a\t-\b B_\t]$. Denote $\ch^n(\o'):=\ch(\o')\we \inf\{t':\|\o'_{t'}\|>\frac{1}{n}\}$. Then for any $\e>0$, it holds
\beaa
u(\th)>\ol\cE_L[u^\th_{\ch^n}-(\a+\e)\ch^n-\b B_{\ch^n}].
\eeaa
Further, by (i) and (ii), we obtain
\beaa
\lim_{m\rightarrow\infty}u^m(\th^m)
>\ol\cE_L\big[\limsup_{m\rightarrow\infty}(u^m)^{\th^m}_{\ch^n}-(\a+\e)\ch^n-\b B_{\ch^n}\big]
\ge \limsup_{m\rightarrow\infty}\ol\cE_L\big[(u^m)^{\th^m}_{\ch^n}-(\a+\e)\ch^n-\b B_{\ch^n}\big].
\eeaa
Therefore, for each $n$, there exists $N_n\ge n$ such that
\beaa
u^{N_n}(\th^{N_n})>\ol\cE_L\big[(u^{N_n})^{\th^{N_n}}_{\ch^n}-(\a+\e)\ch^n-\b B_{\ch^n}\big].
\eeaa
Then, by Lemma \ref{lem: fundamental}, we may find $\hat\th^n$ such that
\beaa
d(\hat\th^n,\th^{N_n})\le \frac{1}{n}
\q\mbox{and}\q
(\a+\e,\b)\in \ul\cJ_L u^{N_n}(\hat\th^n).
\eeaa
\end{proof}

To finish this subsection, we study a special path dependent PDE, and give one of its viscosity solutions by a stochastic representation. Let $u$ be a bounded process and $\ch\in \cH$, and define a function:
\beaa
\eta(\th):=\ul\cE_L\big[(u_\ch)^\th-\a(\ch^\th-t)-\b B_{\ch^\th-t}\big].
\eeaa 

\begin{prop}\label{prop: especial sol}
{\rm (i)}\q  $\eta$ is a viscosity subsolution of the path dependent PDE:
\beaa
-\cL \eta(\th) +\a + L|\b-\pa_\o \eta(\th)| &=& 0.
\eeaa

\no {\rm (ii)}\q  If $u$ is Lipschitz continuous, then $\eta$ is continuous on $\{\th: t \le \ch(\o)\}$.
\end{prop}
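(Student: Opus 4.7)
I view $\eta$ as the value of a drift-control problem: the infimum over $|\mu|\le L$ of $\dbE^{\dbP_\mu}[(u_\ch)^\th - \a(\ch^\th-t) - \b B_{\ch^\th-t}]$. A heuristic Feynman--Kac computation identifies the associated HJB as $-\cL\eta+\a+L|\b-\pa_\o\eta|=0$ (the running cost $\a\,ds$ enters linearly, while $\inf_{|\mu|\le L}[\mu(\pa_\o\eta-\b)]=-L|\pa_\o\eta-\b|$ produces the first-order term). So (i) is the familiar statement that the value function is a viscosity subsolution of its own HJB, which I will verify via the semijet characterisation.

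For Part~(i), I would first establish the dynamic programming principle
\[\eta(\th) = \ul\cE_L\big[\eta^\th_\t - \a\t - \b B_\t\big]\q\text{for every stopping time } \t\le \ch^\th - t,\]
using the tower property of $\ul\cE_L$ (concatenation of admissible drifts). Next, for $(\a',\b')\in\ul\cJ_L\eta(\th)$ the semijet definition provides some $\ch_0\in\cT^+$ with $\eta(\th)\ge\ol\cE_L[\eta^\th_\t - \a'\t - \b' B_\t]$ for all $\t\in\cT_{\ch_0}$. Combining this with the DPP and the elementary inequality $\ul\cE_L[X]-\ol\cE_L[Y]\le\ul\cE_L[X-Y]$ yields
\[\ul\cE_L\big[(\a'-\a)\t + (\b'-\b) B_\t\big] \ge 0 \q\text{for all } \t \le \ch_0\wedge(\ch^\th - t).\]
Finally, I take $\t = h\wedge\ch_0\wedge(\ch^\th - t)$ and let $h\downarrow 0$. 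Since $\dbP_0[\t = h]\to 1$ and $\ul\cE_L[(\b'-\b)B_h] = -L|\b'-\b|h$ (attained under the Girsanov transform with constant drift $\mu=-L\,\sgn(\b'-\b)$), the leading-order expansion delivers $\a'-\a-L|\b-\b'|\ge 0$, which is exactly the viscosity subsolution condition $-\a'+\a+L|\b-\b'|\le 0$.

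For Part~(ii), the same super-additivity inequality gives the direct estimate
\[|\eta(\th)-\eta(\th')| \le \ol\cE_L\Big[\big|(u_\ch)^\th - (u_\ch)^{\th'}\big| + |\a|\,\big|(\ch^\th - t) - (\ch^{\th'} - t')\big| + |\b|\,\big|B_{\ch^\th - t} - B_{\ch^{\th'} - t'}\big|\Big],\]
and on $\{t\le\ch(\o)\}$ each of these three terms vanishes as $d(\th,\th')\to 0$ by the Lipschitz continuity of $u$ (propagated through the path shift), the continuity of $\th\mapsto\ch^\th$ on this set, and the uniform integrability of $\sup_{s\le T}|B_s|$ under $\cP$. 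The hard part will be the DPP in Part~(i): the payoff mixes a $\th$-dependent horizon $\ch^\th$ with an accumulated linear cost $-\a\t-\b B_\t$, so verifying the tower property requires a careful concatenation argument exploiting the stability of the drift class $\{|\mu|\le L\}$; once it is in hand, the remaining steps for both parts collapse to algebra.
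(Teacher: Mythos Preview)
Your approach to Part~(i) is correct and essentially matches the paper's. The paper packages the dynamic programming principle via a BSDE representation (Lemma in the appendix): $\bar\eta_t = u_\ch - \a(\ch-t) + \int_t^\ch L|\b-Z_s|\,ds - \int_t^\ch Z_s\,dB_s$, which immediately yields $\eta_0 = \ul\cE_L[\eta_\t-\a\t-\b B_\t]$. The one simplification you miss is that, rather than passing to the limit $h\downarrow 0$, the paper fixes a single $\dbP_\mu\in\cP_L$ and observes that both $\dbE^{\dbP_\mu}[\eta_\t-\a\t-\b B_\t]\ge\eta_0$ (from the DPP, since $\ul\cE_L\le\dbE^{\dbP_\mu}$) and $\eta_0\ge\dbE^{\dbP_\mu}[\eta_\t-\a'\t-\b' B_\t]$ (from the semijet, since $\ol\cE_L\ge\dbE^{\dbP_\mu}$). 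Subtracting and using $\dbE^{\dbP_\mu}[B_\t]=\mu\,\dbE^{\dbP_\mu}[\t]$ for constant $\mu$ gives $(\a'-\a)+(\b'-\b)\cdot\mu\ge 0$ after dividing by $\dbE^{\dbP_\mu}[\t]>0$; optimising over $|\mu|\le L$ finishes. This avoids the asymptotic step entirely, but your route through $\ul\cE_L[X]-\ol\cE_L[Y]\le\ul\cE_L[X-Y]$ and $h\downarrow 0$ is also valid.

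For Part~(ii), your outline is right but you underestimate where the work lies. The phrase ``the continuity of $\th\mapsto\ch^\th$ on this set'' hides the only genuinely nontrivial point: hitting times are \emph{not} continuous in the starting configuration in any pointwise sense, and what is actually needed is $\ol\cE_L\big[|\ch_0^\th-\ch_0^{\th'}|\big]\to 0$ as $d(\th,\th')\to 0$, where $\ch_0$ is the exit time from the bounded open convex set defining $\ch\in\cH$. The paper does not prove this here but imports it from an external reference on elliptic path-dependent PDEs; the convexity of the domain is what makes it work. Once that estimate is granted, your three-term bound (with BDG controlling the Brownian increment, as the paper does) closes the argument. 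So be explicit that this $L^1$-continuity of the hitting time is the crux and requires the structural assumption $\ch\in\cH$.
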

We will report the proof in the appendix.

\subsection{Equivalent definitions of viscosity solution}

Denote by $\cH$ the collection of all the stopping times of the form of $\ch^{s,O}:=\inf\big\{t\ge 0:\th\notin [0,s)\times O\big\}\in\cT^+$, where $s>0$ and $O\subset \dbR^d$ is a bounded open convex set.

\begin{defn}\label{Def jet constant}
For $u\in \bfL^1(\cP_L)$, we define for each $\th\in\Th$:
 \beaa
 \ul\cJ'_L u(\th)
 &:=&
 \big\{(\a,\b)\in \dbR\times\dbR^d:~u(\th)=\max_{\t\in\cT_\ch}\ol\cE_L[u^\th_\t-\a\t-\b B_\t],~~\mbox{for some}~\ch\in\cH
 \big\};\\
   \ol\cJ'_L u(\th)
 &:=&
 \big\{(\a,\b)\in \dbR\times\dbR^d:~u(\th)=\min_{\t\in\cT_\ch}\ul\cE_L[u^\th_\t-\a\t-\b B_\t],~~\mbox{for some}~\ch\in\cH
 \big\}.
 \eeaa
\end{defn}
Comparing to Definition \ref{Def jet}, we replace the stopping time $\ch\in \cT^+$ by a hitting time in $\cH$.

\begin{prop}\label{prop: equiv_defn}
Suppose that $u\in \mbox{\rm USC}_b(\Th)$ and that the generator $F:(\th,y,z)\mapsto\dbR$ satisfies Assumption \ref{assum: Perron}. Then $u$ is a viscosity subsolution of Equation \eqref{PPDE} if and only if 
\be\label{eq: equivalent def}
-\a-F(\th,u(\th),\b)\le 0,\q\mbox{for all}\q \th\in \Th, ~(\a,\b)\in \ul\cJ'_L u(\th).
\ee
The similar result holds for supersolutions.
\end{prop}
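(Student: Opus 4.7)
The easy direction is immediate: since $\cH\subset\cT^+$, we have $\ul\cJ'_L u(\th)\subset\ul\cJ_L u(\th)$ for every $\th$, so any viscosity subsolution in the sense of Definition~\ref{Def jet} satisfies \eqref{eq: equivalent def}; the supersolution implication is symmetric, with $\ol\cJ'_L$, $\ol\cJ_L$ and $\ul\cE_L$ replacing their counterparts.

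For the nontrivial converse I argue the subsolution case (the supersolution case being symmetric). Assume \eqref{eq: equivalent def} holds for every $(\a,\b)\in\ul\cJ'_L u(\th)$, fix $(\a,\b)\in\ul\cJ_L u(\th_0)$ with realising horizon $\ch_0\in\cT^+$, and suppose toward contradiction that $-\a-F(\th_0,u(\th_0),\b)>0$. The core idea is to transfer this $\cT^+$-subjet at $\th_0$ into an $\cH$-subjet at a nearby point via a Snell-envelope shift. Fix a small $\e>0$, introduce the shifted process $\tilde u_s(\o'):=u(t_0+s,\o_0\otimes_{t_0}\o')-(\a+\e)s-\b\o'_s$ and the hitting times $\ch_n:=\ch^{1/n,\,B(0,1/n)}\in\cH$, and let $M_n:=\sup_{\t\in\cT_{\ch_n}}\ol\cE_L[\tilde u_\t]\ge u(\th_0)$. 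If $M_n=u(\th_0)$ for some $n$, then $(\a+\e,\b)\in\ul\cJ'_L u(\th_0)$ with horizon $\ch_n$, so the hypothesis gives $-(\a+\e)-F(\th_0,u(\th_0),\b)\le 0$ and $\e\to 0$ contradicts the assumption. Otherwise $M_n>u(\th_0)$, and I would invoke Theorem~\ref{thm: OS} and the strategy of Lemma~\ref{lem: fundamental} to extract a path $\o^*_n$ and a time $t^*_n:=\t^*_n(\o^*_n)<\ch_n(\o^*_n)$ at which $\tilde u$ meets its Snell envelope, so that at the point $\th^*_n:=(t_0+t^*_n,\o_0\otimes_{t_0}\o^*_n)$ one has $(\a+\e,\b)\in\ul\cJ_L u(\th^*_n)$ with horizon the shifted stopping time $\ch_n^{\th^*_n}-t^*_n$. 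The crucial observation is the direct computation $\ch_n^{\th^*_n}-t^*_n=\ch^{1/n-t^*_n,\,B(-\o^*_n(t^*_n),\,1/n)}\in\cH$, since the translated open ball is still a bounded open convex subset of $\dbR^d$. Hence $(\a+\e,\b)\in\ul\cJ'_L u(\th^*_n)$; the hypothesis yields $-(\a+\e)-F(\th^*_n,u(\th^*_n),\b)\le 0$; and since $d(\th^*_n,\th_0)\le 2/n$, I pass to the limit using the USC of $u$, the continuity of $F$ in $\th$ and the monotonicity of $F$ in $y$ (Assumption~\ref{assum: Perron}), and finally send $\e\to 0$ to reach the contradiction.

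The main obstacle will be in the case $M_n>u(\th_0)$: Lemma~\ref{lem: fundamental} as stated requires the strict inequality $\tilde u_0>\ol\cE_L[\tilde u_{\ch_n}]$ in order to guarantee $t^*_n<\ch_n(\o^*_n)$ on a set of positive probability, and this is not automatic since $\ch_n$ is not in general dominated by $\ch_0$. I plan to address this by first applying the lemma with the truncated horizon $\ch_n\we\ch_0\in\cT_{\ch_0}$: restricted to $\cT_{\ch_n\we\ch_0}$ the original subjet combined with the $\e$-perturbation yields $\ol\cE_L[\tilde u_\t]\le u(\th_0)-\e\,\ul\cE_L[\t]$, and the strict positivity $\ul\cE_L[\ch_n\we\ch_0]>0$ (a consequence of $\ch_0>0$ a.s.\ together with the uniform drift bound in the definition of $\cP_L$) supplies the required strict $\ol\cE_L$-supermartingale inequality. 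Fatou's lemma (Lemma~\ref{lem: Fatou}) together with the USC of $u$ then force $M_n\to u(\th_0)$, which for $n$ large enough allows one to reduce the shifted horizon at the resulting point back into $\cH$ along the selected path $\o^*_n$. This last reduction is the most delicate technical step.
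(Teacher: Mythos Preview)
Your overall strategy---perturb by $\e$, apply Lemma~\ref{lem: fundamental} to locate a tangency point $\th^*_n$ near $\th_0$, verify that the residual horizon at $\th^*_n$ lies in $\cH$, then pass to the limit---is exactly the paper's. Your computation $\ch_n^{\th^*_n}-t^*_n=\ch^{1/n-t^*_n,\,B(-\o^*_n(t^*_n),1/n)}\in\cH$ is correct and is precisely the kind of observation needed. The gap is in how you produce the strict inequality required by Lemma~\ref{lem: fundamental}.

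Your own diagnosis is right: the lemma needs $\tilde u_0>\ol\cE_L[\tilde u_{\ch_n}]$, and since $\ch_n\not\le\ch_0$ in general, the subjet condition at $\th_0$ says nothing about $\tilde u_{\ch_n}$. Your fix, truncating to $\ch_n\wedge\ch_0$, does restore the strict inequality, but then the tangency point $\th^*_n$ delivered by Lemma~\ref{lem: fundamental} comes with realising horizon $(\ch_n\wedge\ch_0)^{\th^*_n}-t^*_n$, and the $\cH$-candidate $\ch_n^{\th^*_n}-t^*_n$ is \emph{larger} than this, not smaller. Since subjets only localise downward, you cannot conclude $(\a+\e,\b)\in\ul\cJ'_L u(\th^*_n)$. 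The sentence ``Fatou's lemma \ldots force $M_n\to u(\th_0)$, which for $n$ large enough allows one to reduce the shifted horizon at the resulting point back into $\cH$'' does not supply a mechanism: knowing $M_n\to u(\th_0)$ gives no control on whether the arbitrary stopping time $\ch_0^{\th^*_n}-t^*_n$ dominates an element of $\cH$ along \emph{all} continuations, which is what you would need.

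What the paper does instead is approximate $\ch$ from \emph{above} by a ladder of iterated hitting times. Define $\bar\ch^n_0:=0$, $\bar\ch^n_{k+1}:=(\bar\ch^n_k+\tfrac1n)\wedge\inf\{t\ge\bar\ch^n_k:|B_t-B_{\bar\ch^n_k}|\ge\tfrac1n\}$, and set $\ch_n:=\inf\{\bar\ch^n_k:\bar\ch^n_k>\ch\}$. Then $\ch_n\downarrow\ch$, so the upper semicontinuity of $u$ together with Fatou (Lemma~\ref{lem: Fatou}) transfers the strict inequality $u(\th)>\ol\cE_L[\tilde u_\ch]$ to $u(\th)>\ol\cE_L[\tilde u_{\ch_n}]$ for $n$ large. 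Lemma~\ref{lem: fundamental} now applies with horizon $\ch_n$ itself, giving $\th^*$ with $t^*<\ch_n(\o^*)$ and realising horizon $\ch_n^{\th^*}-t^*$. The payoff of the ladder construction is that if $\bar\ch^n_k(\o^*)\le t^*<\bar\ch^n_{k+1}(\o^*)$, then $(\bar\ch^n_{k+1})^{\th^*}-t^*\in\cH$ and is \emph{dominated} by $\ch_n^{\th^*}-t^*$, so it serves as an $\cH$-horizon for the subjet at $\th^*$. In short: rather than shrinking below $\ch_0$ and losing the $\cH$ structure, one enlarges $\ch$ to a stopping time whose very definition guarantees an $\cH$ sub-horizon from every interior point.
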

\begin{proof}
The 'only if' part is trivial by the definitions. We will only prove the 'if' part. Fix a $\th\in \Th$, and suppose $(\a,\b)\in \ul\cJ_L u(\th)$, i.e.
\beaa
u(\th) = \max_{\t\in\cT_\ch}\ol\cE_L[u^\th_\t-\a\t-B_\t]\q\mbox{for some}~~\ch\in\cT^+.
\eeaa
For any $\d>0$, we may suppose $\ch<\hat\ch_\d:=\inf\{t': d(\th',0)\ge \d \}$. Then for any $\e>0$ it holds
\beaa
u(\th) ~>~ \ol\cE_L[u^\th_\ch - (\a+\e)\ch-\b B_\ch].
\eeaa We next define a sequence of hitting time:
\beaa
\bar\ch^n_0:=0,~~ \bar\ch^n_{k+1} := \big(\bar\ch^n_k+\frac{1}{n}\big) \we \inf\big\{t'\ge \bar\ch^n_k: |\o'_{t'}-\o'_{\bar\ch^n_k}|\ge \frac{1}{n} \big\},~~\mbox{for all}~~k\ge 0,
\eeaa
and define $\ch_n:=\inf\{\bar\ch^n_k: \bar\ch^n_k>\ch\}$. Clearly $\ch_n\downarrow\ch$. Since $u\in \mbox{USC}_b(\Th)$, it follows from Fatou's Lemma (Lemma \ref{lem: Fatou}) that
\beaa
\ol\cE_L[u^\th_\ch-(\a+\e)\ch-B_\ch]\ge \ol\cE_L\big[\limsup_{n\rightarrow\infty}(u^\th_{\ch_n}-(\a+\e)\ch_n-B_{\ch_n})\big]
\ge \limsup_{n\rightarrow\infty}\ol\cE_L[u^\th_{\ch_n}-(\a+\e)\ch_n-B_{\ch_n}].
\eeaa 
So there exists $n$ sufficiently large such that
\beaa
u(\th)>\ol\cE_L[u^\th_{\ch_n}-(\a+\e)\ch_n-B_{\ch_n}].
\eeaa
By Lemma \ref{lem: fundamental}, there exists $\th^*\in\Th$ such that $t^*<\ch_n(\o^*)$ and
\beaa
u(\th^*)=\max_{\t\in \cT_{\ch_n^{\th^*}}} \ol\cE_L[u^{\th^*}_\t-(\a+\e)\t-B_\t].
\eeaa
Note that if $\bar\ch^n_k(\th^*)\le t^*<\bar\ch^n_{k+1}(\th^*)$, then $\ch^{\th^*}_n-t^* \ge \ch^*:= (\bar\ch^n_{k+1})^{\th^*}-t^*\in\cH$. It follows that
\beaa
u(\th^*)=\max_{\t\in \cT_{\ch^*}} \ol\cE_L[u^{\th^*}_\t-(\a+\e)\t-B_\t].
\eeaa
By \eqref{eq: equivalent def}, we obtain that
\beaa
-(\a+\e)-F(\th^*,u(\th^*),\b)\le 0.
\eeaa
Finally, by letting $\d,\e\rightarrow 0$ and $n\rightarrow\infty$, we obtain:  $-\a -F(\th, u(\th),\b)\le 0$.
\end{proof}

\subsection{Subsolution property}

\no {\bf Proof of Proposition \ref{prop: subsol}}\q
Fix any $\th\in\Th$. By the definition of $u$ and $u^*$, there is a sequence of functions $\{\phi^n\}\subset \cD$ and a sequence $\{\th^n\}\subset\Th$ such that
\beaa
d(\th^n,\th)\rightarrow 0 \q\mbox{and}\q u^*(\th) ~=~ \lim_{n\rightarrow\infty} \phi^n(\th^n). 
\eeaa
Then by Lemma \ref{lem: stability}, for any $(\a,\b)\in \ul\cJ_L u(\th)$, $n\in\dbN$ and $\e>0$, there is $N_n\ge n$ and $\hat\th^n$ such that
\beaa
d(\th^{N_n},\hat\th^n)\le \frac{1}{n}\q\mbox{and}\q
(\a+\e,\b)\in {\rm cl}\Big(\ul\cJ_L \phi^{N_n}(\hat\th^n)\Big).
\eeaa
Further, since $\phi^n$ ($\le u$) is a viscosity subsolution of Equation \eqref{PPDE} for each $n$, we deduce from the non-decrease of $F$ in $y$ that
\beaa
-(\a+\e)-F(\hat\th^n,u(\hat\th^n),\b)
\le -(\a+\e)-F(\hat\th^n,\phi^{N_n}(\hat\th^n),\b)\le 0.
\eeaa
Then since $\limsup_{n\rightarrow\infty} u(\hat\th^n)\le u^*(\th)$, by letting $n\rightarrow\infty$ we obtain that
\beaa
-(\a+\e)-F(\th,u^*(\th),\b)\le 0.
\eeaa
Finally, by letting $\e\rightarrow 0$, we get the desired result.
\qed

\begin{prop}
It holds that $u=u^*\in \mbox{\rm USC}_b(\Th)$ is a viscosity subsolution of Equation \eqref{PPDE}.
\end{prop}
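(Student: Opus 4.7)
The plan is to prove $u=u^*$, after which the claim follows immediately from Proposition \ref{prop: subsol}. Since $u\le u^*$ is automatic, the task reduces to establishing $u^*\le u$. I would accomplish this by combining both of the propositions \ref{prop: subsol} and \ref{prop: supersol} with the comparison result Theorem \ref{thm: comparison}, essentially mirroring the short sandwich argument given above for Theorem \ref{thm: Perron}.

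Concretely, Proposition \ref{prop: subsol} gives that $u^*\in\mbox{USC}_b(\Th)$ is a viscosity subsolution, and (the still-to-be-proved) Proposition \ref{prop: supersol} gives that $u_*\in\mbox{LSC}_b(\Th)$ is a viscosity supersolution. The pointwise bounds $\ul u\le u\le\ol v$ pass through the semicontinuous envelopes to yield $\ul u_*\le u_*$ and $u^*\le\ol v^*$, so that at the terminal time $u^*_T\le\ol v^*_T=\xi=(\ul u_*)_T\le (u_*)_T$. Theorem \ref{thm: comparison} applied to the USC subsolution $u^*$ and the LSC supersolution $u_*$ then gives $u^*\le u_*$ on $\Th$. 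Combining with the trivial chain $u_*\le u\le u^*$ forces $u=u^*=u_*$, and in particular $u=u^*\in\mbox{USC}_b(\Th)$, which inherits the subsolution property from Proposition \ref{prop: subsol}.

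The main obstacle is that the identity $u=u^*$ does not appear reachable from Proposition \ref{prop: subsol} alone. A tempting alternative would be to show $u^*\in\cD$ directly and then conclude $u^*\le u$ from the definition of $u$ as $\sup\cD$; but this route requires the pointwise upper bound $u^*\le\ol v$, whereas taking the limsup of $u\le\ol v$ only yields $u^*\le\ol v^*$, and these differ at points of discontinuity of $\ol v$. Hence the route through $u_*$ and comparison seems unavoidable in this framework, and the correct placement of this statement is as a corollary assembled from both semijet-stability results together with Theorem \ref{thm: comparison}.
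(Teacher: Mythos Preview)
Your argument is correct but is not the paper's route. The paper's two-line proof argues that $u^*\in\cD$ (using only Proposition~\ref{prop: subsol}), whence $u^*\le u$ by definition of $u$, and then $u=u^*$ from $u\le u^*$. You identify this route and dismiss it because naive limsup of $u\le\ol v$ yields only $u^*\le\ol v^*$; that observation is right, but the bound $u^*\le\ol v$ needed for $u^*\in\cD$ follows instead from Theorem~\ref{thm: comparison} applied to the USC subsolution $u^*$ and the LSC supersolution $\ol v$. Indeed, the boundary hypotheses force $\ol v_T=\xi$: one has $\ol v_T\le\ol v^*_T=\xi$, while any $\phi\in\cD$ gives $\ol v\ge\phi\ge\ul u$, hence $\ol v_T\ge\ul u_T\ge(\ul u_*)_T=\xi$. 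Thus $u^*_T\le\ol v^*_T=\xi=\ol v_T$ and comparison delivers $u^*\le\ol v$ on $\Th$. The payoff of the paper's route is that $u=u^*$ becomes a corollary of Proposition~\ref{prop: subsol} alone (plus comparison against the fixed barrier $\ol v$), so it sits naturally in the subsolution subsection and is available before the supersolution analysis; your route invokes Proposition~\ref{prop: supersol} and simply replicates the sandwich argument already given for Theorem~\ref{thm: Perron}, adding nothing beyond that proof.
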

\begin{proof}
By the previous proposition, we know that $u^*\in \cD$, and thus $u^*\le u$. On the other hand, by the definition of $u^*$, it holds that $u^*\ge u$. Therefore, $u=u^*$.
\end{proof}

\subsection{Supersolution property}

\no {\bf Proof of Proposition \ref{prop: supersol}}\q
\no {\bf 1.}\q Suppose that $u_*$ is not a viscosity supersolution. Then by Proposition \ref{prop: equiv_defn}, there is $\th^0=(t^0,\o^0)\in \Th$ and $(\a,\b)\in \ol\cJ'_L u_*(\th^0)$, i.e. $u_*(\th^0) =\min_{\t\in\cT_\ch} \ul\cE_L [(u_*)^{\th^0}_\t -\a \t - \b B_\t]$ for some $\ch\in\cH$, such that
\be\label{eq: u* delta}
-\a - F(\th^0,u_*(\th^0),\b) ~=:~ -2\d ~<~ 0 .
\ee
Since $F(\th,y,z)$ is non-decreasing in $y$ and $u_*\in\mbox{LSC}_b(\Th)$, it follows from \eqref{eq: u* delta} that
\be\label{eq: u delta neighbor}
-\a + \d - F(\cd,u_*,\b)  < 0 \q\mbox{on}~~O_{9\e_0}:=\{\th: d(\th^0,\th)<9\e_0\}~~\mbox{for some small}~\e_0>0. 
\ee
Without loss of generality, we may assume that $\ch$ is in the form of:
\beaa
\ch(\o) = 3\e_1\we\inf\{s:|\o_s|\ge 3\e_1 \}\q\mbox{for some}~\e_1>0~\mbox{such that}~3\e_1< 3\e_0\we\rho_{\th^0}^{-1}(3\e_0),
\eeaa
where $\rho_{\th^0}$ is  an invertible modulus of continuity of the path $\o^0$, and $\rho_{\th^0}^{-1}$ is the inverse function. Further, take a small neighborhood $O_{\e_2}$ of $\th^0$, where 
$$\e_2 ~<~ \e_1\we \rho_{\th^0}^{-1}(\e_1).$$
We next introduce two stopping times:
\beaa 
\ch_0(\o):= \inf\{t\ge 0: \th\in O_{\e_2}\}
\q\mbox{and}\q
\ch_1(\o) := \inf\{t\ge \ch_0(\o): |\o_t - \o^0_{t^0}|\ge 3\e_1 \}\we (t^0+3\e_1),
\eeaa
together with the set:
\beaa
Q:=\big\{\th\in \Th: \ch_0(\o)\le t\le \ch_1(\o)\big\}.
\eeaa
We claim and will prove in Step 5 that
\beaa
O_{\e_2}\subset Q\subset O_{9\e_0}.
\eeaa
In particular, we have $\ch_0(\o^0)<t^0<\ch_1(\o^0)$, and thus $\ch_1^{\th^0} -t^0 = \ch$. Since $(\a,\b)\in \ol\cJ_L u_*(\th)$, we have
\beaa
u_*(\th^0) < \ul\cE_L \big[(u_*)^{\th^0}_{\ch_1^{\th^0}-t^0} -(\a-\d) (\ch_1^{\th^0} -t^0)- \b B_{\ch_1^{\th^0}-t^0}\big].
\eeaa
We next define the inf-convolution of $u_*$:
\be\label{defn: ubar n}
\ul u^n(\th):=\inf_{\th'\in \Th} \{u_*(\th') + nd(\th',\th)\}\q\mbox{for all}~~\th\in\Th.
\ee
Notice that $\ul u^n$ is Lipschitz continuous. Since $u_*\in \mbox{LSC}_b(\Th)$, it is easy to show that $\ul u^n \uparrow u_*$. Thus, by \eqref{defn: ubar n}, we deduce that for $n$ sufficiently large
\beaa
u_*(\th^0) < \ul\cE_L \big[(\ul u^n)^{\th^0}_{\ch_1^{\th^0}-t^0} -(\a-\d) (\ch_1^{\th^0} -t^0)- \b B_{\ch_1^{\th^0}-t^0}\big].
\eeaa
By defining $\f(\th) := \ul\cE_L\Big[(\ul u^n)^{\th}_{\ch^\th_1-t}-(\a-\d)(\ch^{\th}_1-t) -\b B_{\ch^{\th}_1-t}\Big]$ for all $\th\in \Th$, we have
\be\label{eq:obj contradiction}
\f(\th^0) ~>~ u_*(\th^0).
\ee
We finally define
\beaa
U~:=~ (\f\vee u)1_Q + u1_{Q^c}.
\eeaa

\ms
\no {\bf 2.}\q In this step, we show that $\f$ is viscosity subsolution of the equation:
\be\label{eq: phi equation}
-\cL w - F(\cd, \f\vee u, \pa_\o w)\le 0,
\q\mbox{on}~~\{\th: t<\ch_1(\o)\}.
\ee
It follows from Proposition \ref{prop: especial sol} that for all $(\a',\b')\in \ul\cJ_L\f(\th)$, it holds that
\beaa
-\a'+\a -\d +L|\b-\b'|\le 0.
\eeaa
Further, by \eqref{eq: u delta neighbor} we obtain that
\beaa
-\a'-F\big(\th,(\f\vee u)(\th),\b'\big)\le -\a' -F(\th,u_*(\th),\b)+L|\b-\b'|\le 0.
\eeaa
So the desired result follows.

\ms
\no {\bf 3.}\q In this step, we prove that $U$ is a viscosity subsolution of Equation \eqref{PPDE}. First, for $\th\in Q^o:=\{\th:\ch_0(\o)\le t<\ch_1(\o)\}$, it is clear that both $\f$ and $u$ are viscosity subsolutions of Equation \eqref{eq: phi equation}. Then take any $(\a',\b')\in\ul\cJ_L U(\th)$, i.e.
\beaa
U(\th) = \max_{\t\in\cT_{\ch'}}\ol\cE_L[U^{\th}_\t -\a'\t-\b'B_\t]\q\mbox{for some}~~\ch'\in\cT^+.
\eeaa
If $u(\th)\le \f(\th)$, then it follows that
\beaa
\f(\th) \ge \ol\cE_L[U^{\th'}_\t -\a'\t-\b'B_\t]\ge \ol\cE_L[\f^{\th'}_\t -\a'\t-\b'B_\t]\q\mbox{for all}~~\t\in\cT_{\ch'}.
\eeaa
Thus $(\a',\b')\in \ul\cJ_L \f(\th)$. Otherwise, if $u(\th)>\f(\th)$, we may similarly get $(\a',\b')\in \ul\cJ_L u(\th)$. In both cases, it follows that
\beaa
-\a'-F\big(\th,(\f\vee u)(\th),\b'\big)\le 0.
\eeaa
So we have proved that $U$ is a viscosity subsolution of Equation \eqref{PPDE} on $Q^o$.

On the other hand, for $\th\in (Q^o)^c$, we have $U(\th)= u(\th)$, because whenever $t=\ch_1(\o)$ we have $\f(\th)=\ul u^n(\th)\le u_*(\th)\le u(\th)$. Then it becomes trivial to verify that $U$ is a viscosity subsolution of Equation \eqref{PPDE} on $(Q^o)^c$.

\ms
\no {\bf 4.}\q Our objective is to construct a viscosity subsolution in $\mbox{USC}_b(\Th)$. Since we did not prove $Q$ is closed, we do not know whether $U\in \mbox{USC}_b(\Th)$ itself. We next prove that the USC envelop $U^*$ is still a viscosity subsolution of Equation \eqref{PPDE}. Take any $(\a',\b')\in \ul\cJ_L U^*(\th)$. By the definition of $U^*$, there exists a sequence $\{\th^n\}\subset\Th$ such that
\beaa
d(\th^n,\th)\rightarrow 0,
\q\mbox{and}\q
\lim_{n\rightarrow\infty}U(\th^n)= U^*(\th).
\eeaa
Further, by (ii) of Proposition \ref{prop: especial sol}, $U$ is pathwise u.s.c. Consequently, we can apply Lemma \ref{lem: stability} and obtain that for any $n\in\dbN$ and $\e'>0$, there exits $N_n\ge n$ and $\hat\th^n$ such that
\beaa
d(\th^{N_n},\hat\th^n)\le \frac{1}{n}\q\mbox{and}\q
(\a'+\e',\b')\in {\rm cl}\Big(\ul\cJ_L U(\hat\th^n)\Big).
\eeaa
Since $U$ is a viscosity subsolution of Equation \eqref{PPDE} and $F$ is non-decreasing in $y$, we have
\beaa
-\a'-\e'-F(\hat\th^n,U^*(\hat\th^n),\b')
~\le ~ -\a'-\e'-F(\hat\th^n,U(\hat\th^n),\b')
~\le ~ 0.
\eeaa
Letting $n\rightarrow\infty$ and $\e'\rightarrow 0$, we get
\beaa
-\a'-F(\th,U^*(\th),\b')~\le ~ 0.
\eeaa

Then it is clear that $U^*\in \cD$, so $U^*\le u$ on $\Th$. On the other hand, there exists a sequence $\{\th^n\}\subset O_{\e_2}$ such that $u_*(\th^0)=\lim_{n\rightarrow\infty}u(\th^n)$. Also, by Proposition \ref{prop: especial sol}, $\f$ is continuous on $Q\supset O_{\e_2}$. Then by \eqref{eq:obj contradiction} we have
\beaa
\liminf_{n\rightarrow\infty}(U^*-u)(\th^n)
\ge  \lim_{n\rightarrow\infty}(\f-u)(\th^n)
= \f(\th^0) - u_*(\th^0)>0.
\eeaa
Therefore, there is $\th^n$ such that $U^*(\th^n)>u(\th^n)$. That is a contradiction to $U^*\in \cD$.

\ms
\no {\bf 5.}\q We finally complete the proof of $O_{\e_2}\subset Q\subset O_{9\e_0}$. First, for all $\th\in O_{\e_2}$, it is clear that $\ch_0(\o)\le t$. We denote $t_0:=\ch_0(\o)$ and then consider $s\in [t_0,t]$. Since $|t_0-t^0|\le \e_2$ and $|t-t^0|\le \e_2$, we have $|s-t^0|\le \e_2$. Further, since $\th\in O_{\e_2}$, we have
\beaa
|\o_s-\o^0_{t^0\we s}| ~\le ~ d\big(\th, \th^0\big) ~\le ~ \e_2 ~\le ~ \e_1,
\eeaa
and
\beaa
|\o_s-\o^0_{t^0}|~\le ~ |\o_s-\o^0_{t^0\we s}|+|\o^0_{t^0\we s}-\o^0_{t^0}|
~\le ~ \e_1+\rho_{\th^0}(t^0-t^0\we s)~< ~2\e_1.
\eeaa
It follows that $\ch_1(\o)\ge t$, and thus $\th\in Q$.

Next, take any $\th\in Q$. Still denote $t_0:= \ch_0(\o)$. For $s\le t_0$, since $(t_0,\o)\in O_{\e_2}$, it is clear that
\beaa
|\o_s-\o^0_{t^0\we s}| ~\le ~ d\big((t_0,\o),\th^0\big) ~\le ~ \e_2.
\eeaa
On the other hand, for $s\in [t_0,t]$, since $s\le t\le \ch_1(\o)$, it holds
\beaa
|t^0-s|\le 3\e_1<3\e_0
\q\mbox{and}\q
|\o_s-\o^0_{t^0\we s}|\le |\o_s - \o^0_{t^0}| + |\o^0_{t^0\we s}-\o^0_{t^0}| 
\le 3\e_1 + \rho_{\th^0}(t^0-t^0\we s) <6\e_0.
\eeaa
It follows that $d(\th,\th^0)<9\e_0$, and thus $\th\in O_{9\e_0}$.
\qed

\section{Comparison result}\label{sec: comparison}

\subsection{Regularization}\label{sec: regularization}

For a viscosity subsolution $u\in \mbox{USC}_b(\Th)$ and a viscosity supersolution $v\in \mbox{LSC}_b(\Th)$, we define $M:=\sup_{\th\in\Th}\big(|u(\th)|\vee|v(\th)|\big)$, and
\be\label{defn: un}
u^n(\th)~:=~ \sup_{\th'\in\Th} \big(u(\th') - n\overleftarrow{d}(\th,\th')\big),\q v^n(\th)~:=~ \inf_{\th'\in\Th} \big(v(\th') + n\overleftarrow{d}(\th,\th')\big),
\ee

\begin{lem}\label{lem: u^n limit}
For each $n$, $u^n$ is bounded, Lipschitz continuous in $\overleftarrow{d}(\cd,\cd)$, and  continuous in $d(\cd,\cd)$. Moreover, $u^n$ is decreasing in $n$ and $\lim_{n\rightarrow\infty} u^n(\th)=u(\th)$, for all $\th\in\Th$. The similar result holds true for $v^n$.
\end{lem}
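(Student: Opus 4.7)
The plan is to verify the five claims about $u^n$ in order, treating the analogous claims for $v^n$ by a symmetric argument. All of them rest on two elementary observations: $\overleftarrow{d}$ is a pseudo-metric on $\Th$ (symmetry and the triangle inequality follow directly from the corresponding properties of the absolute value under the $\sup_{s\ge 0}$), and by Lemma \ref{lem: difference d,d} convergence in $\overleftarrow{d}$ is equivalent to convergence in $d$ (for if $\overleftarrow{d}(\th^k,\th)\to 0$ then $|t^k-t|\to 0$, and $\bar\rho(\th,|t^k-t|)\to 0$ by uniform continuity of $\o$ on $[0,T]$).

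For boundedness, taking the admissible choice $\th'=\th$ in \eqref{defn: un} gives $u^n(\th)\ge u(\th)\ge -M$, while $u^n(\th)\le \sup_{\th'}u(\th')\le M$. For the Lipschitz property, fix $\th_1,\th_2\in\Th$ and any $\th'$, and use the triangle inequality for $\overleftarrow{d}$:
\beaa
u(\th')-n\overleftarrow{d}(\th_1,\th')\;\le\; u(\th')-n\overleftarrow{d}(\th_2,\th')+n\overleftarrow{d}(\th_1,\th_2);
\eeaa
taking the supremum over $\th'$ yields $u^n(\th_1)-u^n(\th_2)\le n\overleftarrow{d}(\th_1,\th_2)$, and swapping roles gives Lipschitz continuity in $\overleftarrow{d}$ with constant $n$. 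Continuity in $d$ is then automatic by Lemma \ref{lem: difference d,d}. Monotonicity in $n$ is immediate: for $m\ge n$ and any $\th'$, $u(\th')-n\overleftarrow{d}(\th,\th')\ge u(\th')-m\overleftarrow{d}(\th,\th')$, so $u^n\ge u^m$.

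The key step is the pointwise limit. Since $u^n(\th)\ge u(\th)$ trivially, it suffices to show $\limsup_{n\to\infty}u^n(\th)\le u(\th)$. For each $n$ pick $\th^n\in\Th$ with
\beaa
u^n(\th)\;\le\; u(\th^n)-n\overleftarrow{d}(\th,\th^n)+\tfrac{1}{n}.
\eeaa
Because $u^n(\th)\ge u(\th)\ge -M$ and $u(\th^n)\le M$, we get $n\overleftarrow{d}(\th,\th^n)\le 2M+\tfrac{1}{n}$, hence $\overleftarrow{d}(\th,\th^n)\to 0$. By the equivalence of the $d$- and $\overleftarrow{d}$-topologies noted above, also $d(\th,\th^n)\to 0$. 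The assumption $u\in\mbox{USC}_b(\Th)$ now gives $\limsup_n u(\th^n)\le u(\th)$, and therefore
\beaa
\limsup_{n\to\infty}u^n(\th)\;\le\;\limsup_{n\to\infty}\bigl(u(\th^n)+\tfrac{1}{n}\bigr)\;\le\; u(\th).
\eeaa
The main subtlety is precisely this last step: we need upper semicontinuity with respect to $d$ to convert the near-optimizer bound (which is naturally phrased in $\overleftarrow{d}$) into the desired limit, and it is Lemma \ref{lem: difference d,d} that bridges the two pseudo-distances. The argument for $v^n$ is the mirror image, swapping $\sup$ with $\inf$, $\limsup$ with $\liminf$, and using $v\in\mbox{LSC}_b(\Th)$.
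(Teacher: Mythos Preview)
Your proof is correct and follows essentially the same approach as the paper's: boundedness, $\overleftarrow{d}$-Lipschitz continuity, and monotonicity are dispatched as trivialities, and the pointwise limit is obtained by choosing near-optimizers $\th^n$ with $\overleftarrow{d}(\th^n,\th)\le \frac{2M+1/n}{n}$, then invoking the upper semicontinuity of $u$ in $d$ via Lemma \ref{lem: difference d,d}. You are in fact slightly more careful than the paper in spelling out why $\overleftarrow{d}$ satisfies the triangle inequality and why $\overleftarrow{d}$-convergence implies $d$-convergence, which the paper leaves implicit.
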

\begin{proof}
Clearly, $u^n$ is bounded and Lipschitz continuous in $\overleftarrow{d}(\cd,\cd)$, for each $n$. By Lemma \ref{lem: difference d,d}, $u^n$ is also continuous in $d(\cd,\cd)$. Also, it is clear that $u^n$ is decreasing in $n$ and $u^n \ge u$ for each $n$. Define $u^\infty:=\lim_{n\rightarrow\infty}u^n$. Then, $u^\infty\ge u$. On the other hand, since $u$ is bounded, we have
\beaa
u^n(\th)~:=~ \sup_{\overleftarrow{d}(\th',\th)\le\frac{2M}{n}} \big(u(\th') - n\overleftarrow{d}(\th,\th')\big)
\eeaa
In particular, there exists $\th^n$ such that
\beaa
\overleftarrow{d}(\th^n,\th)\le \frac{2M}{n}
\q\mbox{and}\q
u^n(\th)\le u(\th^n)+\frac{1}{n}.
\eeaa
Therefore, $u^\infty(\th)\le \limsup_{n\rightarrow\infty} u(\th^n)$. Since $u\in \mbox{USC}_b(\Th)$, it follows that
\beaa
u^\infty(\th)\le \limsup_{n\rightarrow\infty} u(\th^n)\le u(\th).
\eeaa
\end{proof}

\subsection{Generator $F(\th,y,z)$ independent of $y$}\label{sec: independ y}

In this subsection we suppose that there is no dependence on $y$ in the generator $F(\th,y,z)$.  Let $u\in\mbox{USC}_b(\Th)$ be a viscosity subsolution of the path dependent PDE with the generator $F(\th,y,z) = F_0(\th,z)$, and  $v\in\mbox{LSC}_b(\Th)$ be a viscosity supersolution of the path dependent PDE with the generator $F(\th,y,z) = F_0(\th,z)+\d(\th)$. We suppose that Assumption \ref{assum: F} holds true for both generators $F_0$ and $F_0+\d$. In particular, we denote $\rho^0:=\rho^{F_0} \vee \rho^{F_0+\d}$.

\begin{prop}\label{prop: un}
For each $n$, $u^n$ is a viscosity subsolution of the following path dependent PDE:
\be\label{eq: u^n}
-\cL u^n(\th)-F_0(\th,\pa_\o u^n(\th)) - \rho^0 (\th,\e_n(\th))\le 0,
\ee
where $\e_n(\th):=\frac{2M+1}{n}+\bar\rho\big(\th,\frac{2M}{n}\big)$. Similarly, $v^n$ is a viscosity supersolution of:
\be\label{eq: v^n}
-\cL v^n(\th)-F_0(\th,\pa_\o v^n(\th)) +\d(\th) + \rho^0 (\th,\e_n(\th))\ge 0.
\ee
\end{prop}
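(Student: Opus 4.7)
Proof proposal:

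The plan is to translate any element $(\alpha,\beta)\in\ul\cJ_L u^n(\th)$ into a genuine element of $\ul\cJ_L u(\hat\th)$ for some nearby point $\hat\th$ with $d(\th,\hat\th)\le\e_n(\th)+o(1)$, invoke the subsolution inequality for $u$ at $\hat\th$, and absorb the displacement into $\rho^0(\th,\e_n(\th))$ via Assumption~\ref{assum: F}(iii). The argument for $v^n$ is entirely symmetric (inf-convolution, superjet, and the extra $\d(\th)$-term in the generator), so I focus on $u^n$. Fix $\th$ and $(\alpha,\beta)\in\ul\cJ_L u^n(\th)$; since $\t=0$ already realises the max, the associated stopping time can be replaced by any smaller $\ch\in\cT^+$, and I take $\ch:=\eta\we\inf\{t':|\o'_{t'}|\ge\eta\}$ for a small $\eta>0$, which satisfies $\ul\cE_L[\ch]>0$ and localises path displacements within $2\eta$. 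Since $|u|\le M$, for each $k\ge1$ one can pick $\th^k=(t^k,\o^k)$ with
$$u^n(\th)\le u(\th^k)-n\overleftarrow{d}(\th,\th^k)+\tfrac1k\q\text{and}\q\overleftarrow{d}(\th,\th^k)\le\tfrac{2M+1/k}{n},$$
so by Lemma~\ref{lem: difference d,d}, $d(\th,\th^k)\le(2M+1)/n+\bar\rho(\th,(2M+1/k)/n)\to\e_n(\th)$ as $k\to\infty$.

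The crucial structural fact is the invariance of the backward distance under concatenation by an identical tail: for all $s\ge0$ and $\o'\in\O$,
$$\overleftarrow{d}\bigl((t+s,\o\otimes_t\o'),(t^k+s,\o^k\otimes_{t^k}\o')\bigr)=\overleftarrow{d}(\th,\th^k),$$
which follows directly from the definition (the freshly concatenated tail contributes $0$ to the backward comparison, and the prior past is unchanged). Using the right-hand point as a test point in the sup defining $u^n$ at the left-hand point then yields the pointwise lower bound $(u^n)^\th_s(\o')\ge u^{\th^k}_s(\o')-n\overleftarrow{d}(\th,\th^k)$, and substituting into the subjet equality for $u^n$ at $\th$ the two $n\overleftarrow{d}(\th,\th^k)$ terms cancel, giving
$$u(\th^k)+\tfrac1k\ge\max_{\t\in\cT_\ch}\ol\cE_L\bigl[u^{\th^k}_\t-\alpha\t-\beta B_\t\bigr].$$

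To upgrade this approximate subjet into an exact one, pick $\gamma_k\downarrow0$ with $\gamma_k\ul\cE_L[\ch]>1/k$; then by sublinearity of $\ol\cE_L$, $u(\th^k)>\ol\cE_L[u^{\th^k}_\ch-(\alpha+\gamma_k)\ch-\beta B_\ch]$, and Lemma~\ref{lem: fundamental} (whose u.s.c./integrability hypotheses are inherited from $u\in\mbox{\rm USC}_b$) produces $\hat\th^k$ with $d(\hat\th^k,\th^k)\le2\eta$ and $(\alpha+\gamma_k,\beta)\in\ul\cJ_L u(\hat\th^k)$. The subsolution inequality for $u$ at $\hat\th^k$ then reads $-(\alpha+\gamma_k)-F_0(\hat\th^k,\beta)\le0$, and Assumption~\ref{assum: F}(iii) gives $|F_0(\hat\th^k,\beta)-F_0(\th,\beta)|\le\rho^{F_0}(\th,d(\th,\hat\th^k),\cdot)$; sending $k\to\infty$ then $\eta\to0$, one has $d(\th,\hat\th^k)\le\e_n(\th)+o(1)$ and $\gamma_k\to0$, yielding the desired $-\alpha-F_0(\th,\beta)-\rho^0(\th,\e_n(\th))\le0$, which is \eqref{eq: u^n}. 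The main obstacle is precisely this final upgrade: because the ball $\{\th':\overleftarrow{d}(\th,\th')\le C/n\}$ is not compact in $\Th$, the sup defining $u^n(\th)$ need not be attained and the $1/k$-slack is genuine; eliminating it demands the perturbation $\gamma_k$ combined with the strict positivity of $\ul\cE_L[\ch]$ furnished by the specific choice of $\ch$, and this is the point at which Lemma~\ref{lem: fundamental} (and ultimately the semicontinuous optimal stopping result, Theorem~\ref{thm: OS}) becomes indispensable.
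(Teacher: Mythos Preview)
Your argument is correct and follows essentially the same route as the paper: the key steps---the invariance $\overleftarrow{d}\bigl((t+s,\o\otimes_t\o'),(t^k+s,\o^k\otimes_{t^k}\o')\bigr)=\overleftarrow{d}(\th,\th^k)$ to transfer the subjet inequality from $u^n$ to $u$, the use of Lemma~\ref{lem: fundamental} to upgrade the approximate subjet to an exact one at a nearby point, and Assumption~\ref{assum: F}(iii) to control the displacement of $F_0$---are exactly those of the paper. The only difference is bookkeeping: the paper fixes a single perturbation $\e>0$, localises $\ch\le\ch^n:=\inf\{t':|t'|+\|\o'_{t'\we\cdot}\|\ge 1/n\}$ (tying the localisation scale to $n$, which is where the ``$+1$'' in $(2M+1)/n$ enters), and chooses a $c$-optimal $\th^n$ with $c$ determined by the strict inequality; you instead run a sequence $\th^k$ with slack $1/k$, absorb it via $\gamma_k$ with $\gamma_k\ul\cE_L[\ch]>1/k$, and send $k\to\infty$ then $\eta\to0$. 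Both reach the same conclusion (your double limit in fact yields the marginally sharper bound $2M/n+\bar\rho(\th,2M/n)$, which is dominated by $\e_n(\th)$).
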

\begin{proof}
We only prove the result for $u^n$. Let $(\a,\b)\in \ul\cJ_L u^n(\th)$, i.e.
\beaa
u^n(\th) = \max_{\t\in\cT_\ch}\ol\cE_L\big[(u^n)^\th_\t-\a\t-\b B_\t\big],\q\mbox{for some}~~\ch\in\cT^+.
\eeaa
Without loss of generality, we may assume that $\ch(\o')\le \ch^n(\o'):=\inf\{t':|t'|+\|\o'_{t'\we\cd}\|\ge \frac{1}{n}\}$ for all $\o'\in\O$.
For any $\e>0$, we have
\be\label{u^n eps}
u^n(\th)-c > \ol\cE_L\big[(u^n)^\th_\ch-(\a+\e)\ch-\b B_\ch\big],
\q\mbox{for some}~~c>0.
\ee
By the definition of $u^n$ and $|u|\le M$, there exists $\th^n=(t^n,\o^n)\in\Th$ such that
\be\label{c optimal th}
\overleftarrow{d}(\th,\th^n)\le \frac{2M}{n}
\q\mbox{and}\q
u^n(\th)-c\le u(\th^n)-n\overleftarrow{d}(\th,\th^n).
\ee
Further, since $(u^n)^\th_\ch\ge u^{\th^n}_\ch - n\overleftarrow{d}\big((t+\ch,\o\otimes_t B),(t^n+\ch,\o^n\otimes_{t^n}B)\big) = u^{\th^n}_\ch - n\overleftarrow{d}(\th,\th^n)$, it follows from \eqref{u^n eps} and \eqref{c optimal th} that
\beaa
u(\th^n) >
\ol\cE_L\big[u^{\th^n}_\ch -(\a+\e)\ch-\b B_\ch\big]
\eeaa
By Lemma \ref{lem: fundamental} and $\ch\le \ch^n$, we may find $\bar\th^n\in\Th$ such that
\beaa
\big(\a+\e,\b\big)\in\ul\cJ_L u(\bar\th^n)
\q\mbox{and}\q
d(\th^n,\bar\th^n)\le \frac{1}{n}.
\eeaa
Since $u$ is a viscosity subsolution, we have
\be\label{a b simple}
-(\a+\e)-F_0(\bar\th^n,\b)\le 0.
\ee
Further, by Assumption \ref{assum: F}, we obtain that
\be\label{estim: F th th^n}
| F_0(\bar\th^n,\b)-F_0(\th,\b)|
\le 
\rho^0\big(\th,d(\th,\bar\th^n)\big)
\le
\rho^0\big(\th,d(\th,\th^n)+d(\th^n,\bar\th^n)\big)\le \rho^0\Big(\th,d(\th,\th^n)+\frac{1}{n}\Big).
\ee
By Lemma \ref{lem: difference d,d} and \eqref{c optimal th}, we have
\beaa
d(\th,\th^n)\le \overleftarrow{d}(\th,\th^n)+\bar\rho(\th,|t-t^n|)\le \frac{2M}{n}+\bar\rho\Big(\th,\frac{2M}{n}\Big).
\eeaa
It follows from \eqref{a b simple} and \eqref{estim: F th th^n} that
\beaa
-(\a+\e)-F_0(\th,\b)-\rho^0 (\th,\e_n(\th))\le 0.
\eeaa
Finally, by letting $\e\rightarrow 0$, we show that $u^n$ is a viscosity subsolution of the path dependent PDE \eqref{eq: u^n}.
\end{proof}

In Proposition 4.17 of \cite{RTZ} the authors proved that if $u,v$ are viscosity subsolution and supersolution of the same path dependent PDE, then $u-v$ is a viscosity subsolution of the equation $-\cL w - L|w| -L|\pa_\o w|=0 $. Here, although $u^n, v^n$ are corresponding to two different equations, one may follow the same argument as in \cite{RTZ} and prove that:

\begin{prop}\label{prop: wn}
Denote $w^n:=u^n-v^n$. Then $w^n\in\mbox{\rm USC}_b$ is a viscosity subsolution of the path dependent PDE:
\be\label{eq: wn}
-\cL w^n(\th) - L|\pa_\o w^n(\th)|\le 2\rho^0(\th,\e_n(\th))+\d(\th).
\ee
\end{prop}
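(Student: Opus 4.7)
The plan is to follow the scheme of Proposition~4.17 in \cite{RTZ}, while carrying along the extra inhomogeneous terms $\rho^0(\cd,\e_n(\cd))$ and $\d(\cd)$ that appear in the regularized equations \eqref{eq: u^n} and \eqref{eq: v^n}. Since $u^n,v^n\in C^0(\Th)$ are bounded by Lemma \ref{lem: u^n limit}, the difference $w^n=u^n-v^n$ is automatically bounded and continuous, hence $w^n\in\mbox{\rm USC}_b(\Th)$ is immediate and only the viscosity subsolution property requires work.

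Fix $\th\in\Th$ and $(\a,\b)\in\ul\cJ_L w^n(\th)$, so that $w^n(\th)=\max_{\t\in\cT_\ch}\ol\cE_L[(w^n)^\th_\t-\a\t-\b B_\t]$ for some $\ch\in\cT^+$. For every $\e>0$ this sharpens to a strict inequality $w^n(\th)-c>\ol\cE_L[(w^n)^\th_\ch-(\a+\e)\ch-\b B_\ch]$ for some margin $c>0$. The central step is to turn this strictness, via a Lemma~\ref{lem: fundamental}-type optimal-stopping argument applied to auxiliary processes built from $u^n$ and $v^n$, into a point $\th^*$ close to $\th$ together with semijet elements $(\a_1,\b_1)\in\ul\cJ_L u^n(\th^*)$ and $(\a_2,\b_2)\in\ol\cJ_L v^n(\th^*)$ satisfying $\a_1-\a_2=\a+\e$ and $\b_1-\b_2=\b$, up to arbitrarily small errors. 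This semijet-splitting is precisely what is carried out in \cite[Proposition~4.17]{RTZ}, using the subadditivity $\ol\cE_L[X-Y]\le\ol\cE_L[X]-\ul\cE_L[Y]$ to decouple the variational problems for $u^n$ and $v^n$ at the extremal point.

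Once such $\th^*$ is obtained, Proposition \ref{prop: un} applied to $u^n$ and $v^n$ separately yields
\beaa
-\a_1-F_0(\th^*,\b_1)&\le& \rho^0(\th^*,\e_n(\th^*)),\\
-\a_2-F_0(\th^*,\b_2)-\d(\th^*)&\ge& -\rho^0(\th^*,\e_n(\th^*)).
\eeaa
Subtracting the two and invoking the uniform Lipschitz property of $F_0$ in $z$ from Assumption \ref{assum: F}(i) produces
\beaa
-(\a+\e)-L|\b| ~=~ -(\a_1-\a_2)-L|\b_1-\b_2| ~\le~ 2\rho^0(\th^*,\e_n(\th^*))+\d(\th^*).
\eeaa
Passing $\th^*\to\th$ along the approximating sequence and then $\e\to 0$, using continuity of $\rho^0$, $\d$ and of $\e_n(\cd)$ (which follows from continuity of $\bar\rho$ established en route to Lemma \ref{lem: difference d,d}), delivers exactly $-\a-L|\b|\le 2\rho^0(\th,\e_n(\th))+\d(\th)$, which is \eqref{eq: wn}.

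The main obstacle is the semijet-splitting step: extracting compatible semijets of $u^n$ and $v^n$ at a common approximate point from a single subjet element of the difference $u^n-v^n$. In the classical PDE framework this is the role of the Crandall--Ishii doubling-of-variables lemma; in the path-dependent semilinear setting of \cite{RTZ} it is replaced by a careful application of the optimal-stopping machinery culminating in Lemma \ref{lem: fundamental}, and the residual quantities $\rho^0(\th^*,\e_n(\th^*))$ and $\d(\th^*)$ simply record the slack introduced by the regularization procedure \eqref{defn: un}.
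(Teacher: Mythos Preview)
Your proposal is correct and follows essentially the same approach as the paper, which simply defers the argument to Proposition~4.17 of \cite{RTZ} with the remark that the extra inhomogeneous terms carry through. Note one minor sign slip: your second displayed inequality for $v^n$ should read $-\a_2-F_0(\th^*,\b_2)+\d(\th^*)\ge -\rho^0(\th^*,\e_n(\th^*))$, consistently with \eqref{eq: v^n}; with this correction your subtraction indeed yields \eqref{eq: wn}.
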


\begin{prop}\label{prop: comparison special}
Denote $w:=u-v$. Then $w=\lim_{n\rightarrow\infty}w^n$ and is a viscosity subsolution of
\be\label{eq: w}
-\cL w(\th) - L|\pa_\o w(\th)|~\le~ \d(\th).
\ee
\end{prop}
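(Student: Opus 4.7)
The plan is to combine the monotone convergence $w^n \downarrow w$ with the stability lemma (Lemma \ref{lem: stability}) to transfer the viscosity subsolution property from the approximants $w^n$ (which satisfy \eqref{eq: wn} with a vanishing right-hand-side error) to the limit $w$.

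First I would verify pointwise convergence: by Lemma \ref{lem: u^n limit}, $u^n\downarrow u$ and $v^n\uparrow v$, so $w^n=u^n-v^n$ is decreasing in $n$ and converges pointwise to $w=u-v$. Since $u\in\mbox{USC}_b(\Th)$ and $-v\in\mbox{USC}_b(\Th)$, we have $w\in\mbox{USC}_b(\Th)$. Next I would check the two hypotheses of Lemma \ref{lem: stability} for the sequence $\{w^n\}$ and the point $\th$, choosing the trivial approximating sequence $\th^n\equiv\th$. Hypothesis (i) is immediate from $w^n(\th)\to w(\th)$. For (ii), fix any $\bar\th$ and $\bar\th^k\to\bar\th$; by monotonicity, for any fixed $n_0$ and all $k\ge n_0$ we have $w^k(\bar\th^k)\le w^{n_0}(\bar\th^k)$, so by continuity of $w^{n_0}$ (guaranteed by Lemma \ref{lem: u^n limit}), $\limsup_{k}w^k(\bar\th^k)\le w^{n_0}(\bar\th)$; letting $n_0\to\infty$ yields $\limsup_{k}w^k(\bar\th^k)\le w(\bar\th)$.

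Now fix $(\a,\b)\in\ul\cJ_L w(\th)$ and $\e>0$. Lemma \ref{lem: stability} supplies, for each $n$, an index $N_n\ge n$ and a point $\hat\th^n$ with $d(\th,\hat\th^n)\le 1/n$ such that $(\a+\e,\b)\in\mbox{cl}\bigl(\ul\cJ_L w^{N_n}(\hat\th^n)\bigr)$. By Proposition \ref{prop: wn}, for every element $(\a',\b')\in\ul\cJ_L w^{N_n}(\hat\th^n)$ the subsolution inequality $-\a'-L|\b'|\le 2\rho^0(\hat\th^n,\e_{N_n}(\hat\th^n))+\d(\hat\th^n)$ holds; passing to the closure (the left-hand side is continuous in $(\a',\b')$) gives
\beaa
-(\a+\e)-L|\b|~\le~ 2\rho^0\bigl(\hat\th^n,\e_{N_n}(\hat\th^n)\bigr)+\d(\hat\th^n).
\eeaa
Finally I would send $n\to\infty$. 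Continuity of $\d$ (coming from the continuity of $F_0$ and $F_0+\d$ assumed through Assumption \ref{assum: F}) yields $\d(\hat\th^n)\to\d(\th)$. The crucial point is that $\e_{N_n}(\hat\th^n)=\tfrac{2M+1}{N_n}+\bar\rho(\hat\th^n,\tfrac{2M}{N_n})\to 0$: the first summand is obvious, and the second is controlled by uniformly approximating $\hat\o^n$ by $\o$ (since $d(\hat\th^n,\th)\to 0$) together with the uniform continuity of $\o$ on $[0,t]$. Since $\rho^0(\th,0,\cd)=0$ and $\rho^0$ is jointly continuous, the first term on the right of the displayed inequality vanishes in the limit. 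This produces $-(\a+\e)-L|\b|\le\d(\th)$; letting $\e\to 0$ finishes the argument.

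The main obstacle is the last convergence step, namely showing $\rho^0(\hat\th^n,\e_{N_n}(\hat\th^n))\to 0$: one must carefully interchange the limits in $\th$ and in the modulus argument $\e_n$, which requires a bit of care because $\bar\rho$ itself is only continuous through the continuity of the sample path $\o$. Once this uniformity in the $\th$-variable is established, the rest is a routine closure/limit argument in the semijet framework developed in the previous sections.
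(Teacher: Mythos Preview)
Your proposal is correct and follows essentially the same route as the paper's proof: apply Lemma~\ref{lem: stability} to the sequence $w^n$ (with the constant sequence $\th^n\equiv\th$), use Proposition~\ref{prop: wn} at the perturbed points $\hat\th^n$, and pass to the limit $n\to\infty$, $\e\to0$. The paper's proof is more terse and does not spell out the verification of hypotheses (i)--(ii) of Lemma~\ref{lem: stability}, the closure argument, or the convergence $\rho^0(\hat\th^n,\e_{N_n}(\hat\th^n))\to 0$; your added justifications (monotonicity of $w^n$ plus continuity of $w^{n_0}$ for (ii), and the control of $\bar\rho(\hat\th^n,\cdot)$ via $d(\hat\th^n,\th)\to 0$) are exactly what is needed to fill those gaps.
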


\begin{proof}
By Lemma \ref{lem: u^n limit}, we have $w=\lim_{n\rightarrow\infty}w^n$. Suppose $(\a,\b)\in\ul\cJ_L w(\th)$. Then by Lemma \ref{lem: stability}, for any $n$ and $\e>0$, there exists $N_n\ge n$ and $\hat\th^n$ such that
\beaa
d(\hat\th^n,\th)\le \frac{1}{n}\q\mbox{and}\q(\a+\e,\b)\in\ul\cJ_L w^{N_n}(\hat\th^n)
\eeaa
By Proposition \ref{prop: wn}, $w^n$ is a viscosity subsolution of equation \eqref{eq: wn}. Therefore,
\beaa
-(\a+\e)-L|\b| \le 2\rho^0(\hat\th^n,\e_n(\hat\th^n))+\d(\hat\th^n).
\eeaa
Let $n\rightarrow\infty$ and then $\e\rightarrow 0$. It follows that $-\a-L|\b|\le 0$. So we verified that $w$ is a viscosity subsolution of equation \eqref{eq: w}.
\end{proof}

\subsection{Maximum principle}

In this section, we study the equation corresponding to the Pucci's extremal operator:
\be\label{eq: extrema}
-\cL u - L u^+ - L|\pa_\o u|=0.
\ee

\begin{prop}[Maximum principle]\label{prop: extrema}
Let $u\in \mbox{\rm USC}_b(\Th)$ be a viscosity subsolution of Equation \eqref{eq: extrema}, and suppose that $u_T\le 0$. Then, we have $u\le 0$ on $\Th$. 
\end{prop}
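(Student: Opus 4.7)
The plan is to argue by contradiction: assume $M := \sup_\Theta u > 0$ and show by finite induction on a time mesh $h > 0$ with $3Lh < 1$ that $u \le 0$ on every strip $[T - kh, T]$ for $k = 1, 2, \ldots, \lceil T/h \rceil$. The base case $k = 0$ is the hypothesis $u_T \le 0$. For the inductive step, fix $s \in [0, T - h]$ and assume $u \le 0$ on $[s+h, T]$. Suppose, toward contradiction, that $M_s := \sup_{\{s \le t \le s+h\}} u > 0$; by definition of supremum, one can pick $\th^0 = (t^0, \o^0)$ in the strip with $u(\th^0) > M_s/2$.

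The central construction is the perturbation $u_\d(\th) := u(\th) - \d(s + h - t)$ for $\d > 0$. A direct translation of the semijet definition yields the identity
\[
(\a, \b) \in \ul\cJ_L u_\d(\th) \ \Longleftrightarrow \ (\a - \d, \b) \in \ul\cJ_L u(\th),
\]
so the viscosity subsolution property of $u$ gives $-\a - L u^+(\th) - L|\b| \le -\d$. Combined with the elementary inequality $u^+(\th) - u_\d^+(\th) \le u(\th) - u_\d(\th) = \d(s+h-t) \le \d h$ valid on the strip, this shows that $u_\d$ is a \emph{strict} viscosity subsolution on $\{s \le t \le s+h\}$:
\[
-\a - L u_\d^+(\th) - L|\b| \ \le\ -\d + L\d h \ =\ -\d(1 - Lh) \ =:\ -\eta \ <\ 0 \qquad \mbox{for all }(\a,\b) \in \ul\cJ_L u_\d(\th).
\]

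Now pick $\d$ in the interval $\bigl(\tfrac{M_s L}{1 - Lh}, \tfrac{u(\th^0)}{s+h-t^0}\bigr)$, which is nonempty because $3Lh < 1$, $u(\th^0) > M_s/2$, and $s+h-t^0 \le h$ imply $\tfrac{M_s L}{1 - Lh} < \tfrac{M_s}{2h} < \tfrac{u(\th^0)}{s+h-t^0}$. Then $u_\d(\th^0) > 0$, while the shifted terminal $(u_\d)^{\th^0}_{s+h-t^0} = u(s+h, \cdot) \le 0$ by the inductive hypothesis. Applying Lemma~\ref{lem: fundamental} to the shifted process $(u_\d)^{\th^0}$ with $\ch = s+h-t^0$ produces $\h\th = (\h t, \h\o)$ with $\h t \in [t^0, s+h)$ and $(0, 0) \in \ul\cJ_L u_\d(\h\th)$. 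The strict subsolution inequality at this jet element forces $u_\d^+(\h\th) \ge \eta/L$, hence $u_\d(\h\th) \ge \eta/L = \d(1 - Lh)/L > M_s$ by our choice of $\d$; consequently $u(\h\th) \ge u_\d(\h\th) > M_s$, contradicting $u(\h\th) \le M_s$ since $\h t$ lies in the strip. This completes the induction and therefore the proof.

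The main obstacle is that a global strict perturbation such as $u - \d(T-t)$ fails to produce a strict subsolution because the nonlinear term $Lu^+$ creates an excess $L(u^+ - u_\d^+)$ that cannot be absorbed over a full horizon; this forces the localization to a short time strip satisfying $Lh < 1/3$, on which the excess is dominated by $\d h$ and can be offset by the $-\d$ gained from the perturbation. Once the strict subsolution property is secured on such a strip, Lemma~\ref{lem: fundamental} converts the strip-wise positivity into a pointwise semijet that violates the strict inequality, and iteration over finitely many strips propagates the bound back to $t = 0$.
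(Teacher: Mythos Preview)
Your proof is correct and takes a genuinely different route from the paper's.

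The paper proves Proposition~\ref{prop: extrema} by first observing that $u$ is a subsolution of the equation with $u^+$ replaced by $(u^m)^+$ (the sup-convolution), then applying the regularization $u^n$ of Section~\ref{sec: regularization} to obtain a continuous subsolution of a perturbed equation, constructing an explicit continuous supersolution $v^{n,m}$ via a stochastic control representation, invoking the comparison result for \emph{continuous} viscosity solutions (Theorem~\ref{thm: comparison continuous}, imported from \cite{RTZ}) to obtain $u^n\le v^{n,m}$, and finally letting $n\to\infty$ then $m\to\infty$. In other words, the paper's maximum principle rests on the regularization machinery together with the external comparison theorem for continuous solutions.

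Your argument is more self-contained: it avoids both the regularization and Theorem~\ref{thm: comparison continuous}, relying only on Lemma~\ref{lem: fundamental} (hence on the optimal stopping result). The time-strip localization with $3Lh<1$ is precisely what tames the nonlinear zero-order term $Lu^+$, which would otherwise defeat a global strict-subsolution perturbation; this is the classical penalization-and-touching-point strategy adapted to the path-dependent setting. The trade-off is that the paper's route showcases the sup-convolution technique that is one of its main contributions and is reused elsewhere (Propositions~\ref{prop: un}--\ref{prop: comparison special}), whereas your approach gives a shorter, elementary proof of this particular proposition. One minor point: your write-up implicitly assumes $L>0$ when dividing by $L$ to deduce $u_\d^+(\hat\th)\ge\eta/L$; if $L=0$ the strict inequality $0\le -\eta<0$ already gives the contradiction directly.
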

In preparation of the proof of Proposition \ref{prop: extrema}, we need some observations. Recall the sup-convolution defined in \eqref{defn: un}. Since $u\le u^m$, we clearly have:
\begin{lem}
If $u\in \mbox{\rm USC}_b(\Th)$ is a viscosity subsolution of Equation \eqref{eq: extrema}, then $u$ is also a viscosity subsolution of:
\be\label{eq: extrema um}
-\cL u - L (u^m)^+ - L|\pa_\o u|\le 0.
\ee
\end{lem}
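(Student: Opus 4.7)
The statement is essentially a one-step monotonicity argument, so the plan is short. I would begin by unpacking the definition of viscosity subsolution: fix an arbitrary $\th\in\Th$ and $(\a,\b)\in\ul\cJ_L u(\th)$. Because $u$ is by hypothesis a viscosity subsolution of \eqref{eq: extrema}, applied at this semijet element it gives
\beaa
-\a - L\, u^+(\th) - L|\b| ~\le~ 0.
\eeaa

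The second step is the observation highlighted immediately before the lemma: by the very definition of the sup-convolution $u^m$ in \eqref{defn: un} (taking the test path $\th'=\th$ yields $u(\th)\le u(\th)-n\overleftarrow d(\th,\th)=u(\th)\le u^m(\th)$), one has the pointwise inequality $u\le u^m$ on $\Th$. Since the map $y\mapsto y^+$ is non-decreasing, this upgrades to $u^+(\th)\le (u^m)^+(\th)$, and therefore
\beaa
-\a - L\,(u^m)^+(\th) - L|\b| ~\le~ -\a - L\, u^+(\th) - L|\b| ~\le~ 0.
\eeaa
Since $\th$ and $(\a,\b)\in\ul\cJ_L u(\th)$ were arbitrary, this is exactly the subsolution property for \eqref{eq: extrema um}.

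There is no real obstacle: the argument relies only on monotonicity of the generator in its $y$-variable through the $y^+$ term, and on the tautological domination $u\le u^m$. The semijet $\ul\cJ_L u$ is unchanged since we have not modified $u$ itself; only the $y$-dependent part of the generator, now frozen to the known function $(u^m)^+$, has been enlarged, and enlarging a term already on the left of a $\le 0$ inequality preserves the inequality. No regularity of $u^m$ (continuity, boundedness, Lipschitz property) enters the argument, although these are, of course, the features that make $(u^m)^+$ a useful replacement for $u^+$ later on.
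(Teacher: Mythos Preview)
Your proof is correct and is exactly the argument the paper has in mind: the paper's own justification is the single sentence ``Since $u\le u^m$, we clearly have'' preceding the lemma, which you have unpacked faithfully via the semijet definition and the monotonicity of $y\mapsto y^+$. The minor slip of writing $n$ instead of $m$ in the parenthetical computation is cosmetic and does not affect the argument.
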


For Equation \eqref{eq: extrema um}, the generator is:
\beaa
F^m(\th,z) = L(u^m(\th))^+ - L|z|.
\eeaa 
Further, we may estimate:
\beaa
|F^m(\th,z)-F^m(\th',z)| 
\le  L \big(u^m(\th)-u^m(\th')\big)^+
\le  Lm \overleftarrow{d}(\th,\th')\le  Lm\Big(d(\th,\th')+\bar\rho\big(\th,d(\th,\th')\big)\Big)
.
\eeaa 
Therefore, generator $F^m$ satisfies Assumption \ref{assum: F} and is among the generators independent of $y$ discussed in the previous section.

\ms
\no {\bf Proof of Proposition \ref{prop: extrema}}\q
By using the same argument as in the proof of Proposition \ref{prop: un}, we can prove that $u^n$ is a viscosity subsolution of
\beaa
-\cL u^n(\th)-L(u^m(\th))^+ - L|\pa_\o u^n(\th)| - \rho^{n,m}(\th)\le 0,
\eeaa
where  $\rho^{n,m}(\th):=Cm\Big(\frac{1}{n}+\bar\rho\big(\th,\frac{C}{n}\big)\Big)$ and $C$ is a sufficiently large constant. Clearly, $u^n$ is also a viscosity subsolution of:
\be\label{eq: un with um}
-\cL w(\th)-L(w(\th))^+ - L|\pa_\o w(\th)| \le \rho^{n,m}(\th)+L\big(u^m(\th)-u^n(\th)\big)^+.
\ee
Now we introduce a function $v^{n,m}$:
\beaa
v^{n,m}(\th):=\ol\cE_L\Big[\int_0^{T-t} e^{Ls}\Big((\rho^{n,m})^\th_s+L\big((u^m)^\th_s-(u^n)^\th_s\big)^+\Big)ds + e^{L(T-t)}\big((u^n)^\th_{T-t}\big)^+\Big].
\eeaa
As a value function of a stochastic optimal control problem, one may easily prove that $v^{n,m}$ is viscosity supersolution of Equation \eqref{eq: un with um}. Further it is clear that $v^{n,m}\in C(\Th)$ and $v^{n,m}_T=(u^n_T)^+$. Then by Theorem \ref{thm: comparison continuous}, we obtain that $u^n\le v^{n,m}$ on $\Th$. Now let $n\rightarrow \infty$, we have
\beaa
u(\th) \le \ol\cE_L \Big[\int_0^{T-t} e^{Ls}L\big((u^m)^\th_s-u^\th_s\big)^+ds \Big] \q\mbox{for all}~~\th\in\Th,
\eeaa
where we used the fact $u_T\le 0$. Finally, let $m\rightarrow\infty$, we get $u\le 0$ on $\Th$.
\qed

\subsection{Comparison result for general generators}

In this section we are going to prove the comparison result for equations in the general form \eqref{PPDE} under Assumption \ref{assum: F}. Similar to Proposition 3.14 in \cite{ETZ1} which provides a change of variable for continuous viscosity solutions, we show the following result on a change of variable for semi-continuous viscosity solutions.

\begin{lem}
Let $u\in \mbox{\rm USC}_b(\Th)$ be a viscosity subsolution of Equation \eqref{PPDE}. Define $\tilde u_t(\o):=e^{-Lt}u_t(\o)$. Then $\tilde u\in \mbox{\rm USC}_b(\Th)$ is a viscosity subsolution of the equation:
\beaa
-\cL \tilde u(\th)-L\tilde u(\th) - e^{-Lt} F\big(\th, e^{Lt} \tilde u(\th),e^{Lt}\pa_\o \tilde u(\th)\big)=0.
\eeaa
The similar result holds for viscosity supersolutions.
\end{lem}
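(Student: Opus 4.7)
First, $\tilde u \in \mbox{\rm USC}_b(\Th)$ is immediate: the weight $e^{-Lt}$ is continuous and strictly positive on $[0,T]$, and the product of a bounded upper semicontinuous function with a bounded positive continuous function is bounded and upper semicontinuous. The substance of the proof is to transfer the subsolution property through an appropriate change of semijet. Formally (as one checks for smooth $u$), a linear test $\tilde\a\t + \tilde\b B_\t$ for $\tilde u$ corresponds via $u=e^{Lt}\tilde u$ to a test with drift $Lu(\th) + e^{Lt}\tilde\a$ and gradient $e^{Lt}\tilde\b$ for $u$. My plan is to make this correspondence rigorous at the semijet level, up to an error that vanishes as the localizing stopping time shrinks to $0$.

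Fix $\th=(t,\o)$ and $(\tilde\a,\tilde\b)\in\ul\cJ_L\tilde u(\th)$, so that $\tilde u(\th)\ge \ol\cE_L[\tilde u^\th_\t-\tilde\a\t-\tilde\b B_\t]$ for all $\t\in\cT_\ch$, for some $\ch\in\cT^+$. Multiplying by $e^{Lt}$ and using $e^{Lt}\tilde u^\th_\t = e^{-L\t} u^\th_\t$ yields
\beaa
u(\th) \ge \ol\cE_L\bigl[e^{-L\t} u^\th_\t - e^{Lt}\tilde\a\,\t - e^{Lt}\tilde\b\, B_\t\bigr].
\eeaa
For small $\e>0$ I replace $\ch$ by the hitting time $\ch_\e := \ch\we\e\we\inf\{s:|B_s|\ge\e\}$. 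The Taylor inequality $|e^{-L\t}-1+L\t|\le (L\t)^2/2$ together with $M:=\|u\|_\infty$ gives $e^{-L\t} u^\th_\t \ge u^\th_\t - L\t\, u^\th_\t - CM\e\,\t$ on $\{\t\le\ch_\e\}$. Moreover, on that event the shifted path $(t+\t,\o\otimes_t B)$ lies within $d$-distance $2\e$ of $\th$, so upper semicontinuity of $u$ gives $u^\th_\t\le u(\th)+\d_\e$ pointwise on $\O$, with $\d_\e\to 0$ as $\e\to 0$. Hence $-L\t\, u^\th_\t \ge -L\t\, u(\th) - L\e\d_\e$, and monotonicity of $\ol\cE_L$ produces
\beaa
u(\th) \ge \ol\cE_L\bigl[u^\th_\t - (L u(\th) + e^{Lt}\tilde\a + \eta_\e)\t - e^{Lt}\tilde\b\, B_\t\bigr],
\eeaa
where $\eta_\e := L\d_\e + CM\e \to 0$. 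Since the reverse inequality is trivial from $\t=0$, this shows $(L u(\th) + e^{Lt}\tilde\a + \eta_\e,\, e^{Lt}\tilde\b) \in \ul\cJ_L u(\th)$.

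Applying the viscosity subsolution property of $u$ at $\th$ for this semijet gives $-(L u(\th) + e^{Lt}\tilde\a + \eta_\e) - F(\th, u(\th), e^{Lt}\tilde\b) \le 0$. Dividing by $e^{Lt}$, substituting $u(\th)=e^{Lt}\tilde u(\th)$, and sending $\e\to 0$ yields
\beaa
-\tilde\a - L\tilde u(\th) - e^{-Lt} F\bigl(\th, e^{Lt}\tilde u(\th), e^{Lt}\tilde\b\bigr) \le 0,
\eeaa
which is exactly the required subsolution inequality for $\tilde u$ at $(\tilde\a,\tilde\b)$. The supersolution statement is symmetric: one uses $\ul\cE_L$, $\ol\cJ_L$, and the lower bound $v^\th_\t\ge v(\th)-\d_\e$ coming from LSC of $v$, with all inequalities reversed. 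The main technical hurdle is the mismatch between the factor $e^{-L\t}$ introduced by the change of variable and the affine-in-$(\t,B_\t)$ form required by semijet test functions; absorbing the Taylor correction $L\t\, u^\th_\t$ into the constant drift $L\t\, u(\th)$ is precisely where upper semicontinuity combined with the hitting-time localization enters, and this is the only essential modification relative to the continuous change-of-variable proof in \cite{ETZ1}.
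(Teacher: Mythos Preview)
Your proof is correct and follows essentially the same approach as the paper: localize the stopping time, use a Taylor bound on $e^{-L\t}$ together with upper semicontinuity of $u$ to absorb the multiplicative factor into a perturbed drift of the subjet, apply the subsolution property of $u$, and send the localization parameter to zero. (There is a harmless slip: the bound ``$-L\t\,u^\th_\t \ge -L\t\,u(\th) - L\e\d_\e$'' should read $-L\t\,u(\th) - L\d_\e\,\t$, which is exactly what your subsequent definition $\eta_\e = L\d_\e + CM\e$ uses.)
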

\begin{proof}
Without loss of generality, we only verify the viscosity subsolution property at $0$. Let $(\a,\b)\in \ul\cJ_L \tilde u(0)$, i.e.
\beaa
\tilde u_0 = \max_{\t\in\cT_\ch}\ol\cE_L[\tilde u_\t-\a\t-\b B_\t]\q\mbox{for some}~~\ch\in\cT^+.
\eeaa
It means that
\be\label{eq: change}
u_0 = \max_{\t\in\cT_\ch}\ol\cE_L[e^{-L\t}u_\t - \a\t-\b B_\t].
\ee
Since we have
\beaa
\lim_{t\rightarrow 0}\frac{e^{-Lt}-1}{t}=-L
\q\mbox{and}\q
\limsup_{t\rightarrow 0} u_t \le u_0,
\eeaa
for $\e>0$ we may assume that
\beaa
e^{-Lt}-1+Lt \ge -\e t
\q\mbox{and}\q
u_t \le u_0+\e, \q\mbox{for all}~~t\le \ch.
\eeaa
From \eqref{eq: change}, we obtain that for all $\t\in\cT_\ch$
\beaa
u_0 &\ge & \ol\cE_L\big[(e^{-L\t}-1+L\t)u_\t+u_\t-L\t u_\t-\a\t-\b B_\t\big]\\
&\ge & \ol\cE_L\big[\e C\t + u_\t -L(u_0+\e)\t -\a\t -\b B_\t \big].
\eeaa
This implies that $\big(\a+Lu_0 +(L-C)\e,\b\big)\in \ul\cJ_L u(0)$. Thus
\beaa
-\a-Lu_0 -(L-C)\e-F(0,u_0,\b)\le 0.
\eeaa
By letting $\e\rightarrow 0$, we obtain the desired result.
\end{proof}

\begin{rem}{\rm
For continuous viscosity solutions, the previous result holds true for the change of variables of the form of $\tilde u_t(\o):=e^{\l t}u_t(\o)$ for all $\l \in \dbR$. However, as showed in the previous lemma, the same result only holds true for $\l\le 0$ in the context of semi-continuous viscosity solutions.
}
\end{rem}

Due to the previous lemma, without loss of generality we may assume that the generator $F:(\th,y,z)\mapsto\dbR$ is non-decreasing in $y$. 

\ms
\no {\bf Proof of Theorem \ref{thm: comparison}}\q
 Since $u^n\ge u$,  $u$ is a viscosity subsolution of the equation:
\beaa
-\cL u(\th) - F\big(\th, u^n(\th),\pa_\o u(\th)\big)\le 0.
\eeaa
Similarly,  $v$ is a viscosity supersolution of the equation:
\beaa
-\cL v(\th) - F\big(\th, u^n(\th),\pa_\o u(\th)\big) + L\big(u^n(\th)-v^n(\th)\big)^+
\ge -\cL v(\th) - F\big(\th, v^n(\th),\pa_\o u(\th)\big)\ge 0.
\eeaa
Consider the generator $F^n(\th,z):=F(\th,u^n(\th),z)$, and observe that
\beaa
|F^n(\th,z)-F^n(\th',z)| &=& |F(\th,u^n(\th),z)-F^n(\th',u^n(\th'),z)| \\
&\le & Ln\overleftarrow{d}(\th,\th')+\rho^F\big(\th,d(\th,\th'),u^n(\th)\big)\\
&\le & Ln\Big(d(\th,\th')+\bar\rho\big(\th,d(\th,\th')\big)\Big)+\rho^F\big(\th,d(\th,\th'),u^n(\th)\big)\\
&=: & \rho^{F^n}\big(\th,d(\th,\th')\big).
\eeaa
Therefore, the generator $F^n$ is of the type discussed in the previous section. So by setting $\d(\th):=L\big(u^n(\th)-v^n(\th)\big)^+$, we obtain from Proposition \ref{prop: comparison special} that $w:=u-v$ is a viscosity subsolution of the equation:
\beaa
-\cL w(\th) - L|\pa_\o w(\th)|\le L\big(u^n(\th)-v^n(\th)\big)^+,
\q\mbox{for each}~~n.
\eeaa
Further, by letting $n\rightarrow\infty$, we have that $w$ is a viscosity subsolution of Equation \eqref{eq: extrema}. Finally, by the maximum principle (Proposition \ref{prop: extrema}) we conclude that $w=u-v\le 0$ on $\Th$.
\qed

\section{Optimal stopping for semicontinuous barriers}\label{sec: Optimal stopping}

This section is devoted to the proof of Theorem \ref{thm: OS}. Denote
$$\ol\cE_L[\cd|\cF_t] ~:=~ \esup_{\dbP\in\cP}\dbE^\dbP[\cd|\cF_t].$$
By standard argument, we may prove:

\begin{lem}\label{lem: tower}
For any $\ol\cE_L$-uniformly integrable r.v. $X$, it holds that
$$\ol\cE_L[X|\cF_t] ~=~ \ol\cE_L\big[\ol\cE_L[X|\cF_s]\big|\cF_t\big],~~\dbP_0\mbox{-a.s., for all}~t\le s.$$
\end{lem}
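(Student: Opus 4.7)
The plan is to reduce the claim to the classical tower property under each individual $\dbP\in\cP$, using the fact that $\cP$ is stable under pasting at any time $s\in[0,T]$. The direction $\le$ is essentially immediate; the real content is the direction $\ge$, which requires approximating $\ol\cE_L[X|\cF_s]$ from below by ordinary conditional expectations in a way that is compatible with any prescribed measure of $\cP$ on $\cF_s$.

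First I would verify pasting stability of $\cP$. Given $\mu^1,\mu^2\in\dbL^0(\dbF,\dbR^d)$ with $\|\mu^i\|\le L$ and $A\in\cF_s$, the $\dbF$-progressive process
$$\mu_r = \mu^1_r\1_{[0,s]}(r)+\big(\mu^1_r\1_{A^c}+\mu^2_r\1_A\big)\1_{(s,T]}(r)$$
still satisfies $\|\mu\|\le L$, so $\dbP_\mu\in\cP$. The multiplicative structure of the Girsanov exponent combined with Bayes' rule yields $\dbP_\mu=\dbP_{\mu^1}$ on $\cF_s$ and
$$\dbE^{\dbP_\mu}[Y|\cF_s]=\1_{A^c}\dbE^{\dbP_{\mu^1}}[Y|\cF_s]+\1_A\dbE^{\dbP_{\mu^2}}[Y|\cF_s]$$
for every $\ol\cE_L$-uniformly integrable $Y$. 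Choosing $A:=\{\dbE^{\dbP_{\mu^2}}[X|\cF_s]>\dbE^{\dbP_{\mu^1}}[X|\cF_s]\}$ shows that the family $\{\dbE^\dbP[X|\cF_s]:\dbP\in\cP\}$ is upward directed, and the standard lattice representation of the essential supremum then produces a sequence $(\dbP^n)\subset \cP$ with $\dbE^{\dbP^n}[X|\cF_s]\uparrow\ol\cE_L[X|\cF_s]$, $\dbP_0$-a.s.

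For the inequality $\le$, the classical tower property under each $\dbP\in\cP$ gives
$$\dbE^\dbP[X|\cF_t]=\dbE^\dbP\big[\dbE^\dbP[X|\cF_s]\big|\cF_t\big]\le \dbE^\dbP\big[\ol\cE_L[X|\cF_s]\big|\cF_t\big]\le \ol\cE_L\big[\ol\cE_L[X|\cF_s]\big|\cF_t\big],\q \dbP_0\mbox{-a.s.,}$$
and the essential supremum over $\dbP$ concludes. For the inequality $\ge$, I fix $\dbP=\dbP_{\mu^0}\in\cP$ and paste it with the optimising sequence: the drifts $\tilde\mu^n:=\mu^0\1_{[0,s]}+\mu^n\1_{(s,T]}$ still satisfy $\|\tilde\mu^n\|\le L$, so $\tilde\dbP^n:=\dbP_{\tilde\mu^n}\in\cP$, and the pasting identity gives
$$\dbE^{\tilde\dbP^n}[X|\cF_t]=\dbE^\dbP\big[\dbE^{\dbP^n}[X|\cF_s]\big|\cF_t\big].$$
A monotone convergence argument under $\dbP$, justified by the $\ol\cE_L$-uniform integrability of $X$ (which transfers to $\ol\cE_L[X|\cF_s]$ via the uniform bound on the densities of measures in $\cP$), lets the right-hand side converge to $\dbE^\dbP\big[\ol\cE_L[X|\cF_s]\big|\cF_t\big]$. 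Since the left-hand side is dominated by $\ol\cE_L[X|\cF_t]$ for every $n$, the essential supremum over $\dbP$ gives the reverse inequality.

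The main obstacle is the pasting step and the ensuing upward-directedness of the family of conditional expectations; everything else then reduces to the classical tower property under each $\dbP\in\cP$ and a monotone convergence argument, with the $\ol\cE_L$-uniform integrability assumption entering precisely to justify the interchange of limit and conditional expectation at the very end.
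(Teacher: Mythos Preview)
The paper does not actually prove this lemma: it merely introduces it with the phrase ``By standard argument, we may prove,'' and then states the result. Your proposal is a correct and complete sketch of exactly this standard argument---pasting stability of $\cP$ at time $s$, upward directedness of $\{\dbE^\dbP[X|\cF_s]:\dbP\in\cP\}$, and monotone convergence---so there is nothing to compare against and your write-up would serve as the omitted proof.
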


We consider the optimal stopping problem:
\beaa
Y_0 &:= & \sup_{\t\in\cT^*}\ol\cE_L\big[X_\t\big],
\eeaa
where $X$ is a process u.s.c. in $t$. Define the dynamic version of the optimal stopping problem:
\beaa
Y_t~:= ~ \esup_{\t\in\cT_*^t} \ol\cE_L[X_\t|\cF_t]~:= ~\esup_{\t\in\cT_*^t,\dbP\in\cP_L}\dbE^\dbP\big[X_\t\big|\cF_t\big],
\eeaa
where $\cT_*^t$ is the set of all the stopping times in $\cT_*$ larger than $t$.

\subsection{Doob-Meyer decomposition}\label{sec:DM}

In most of the existing literature, authors only discuss the Doob-Meyer decomposition for RCLL supermartingale in class D. However, in our case, we need the decomposition under some weaker conditions. We find that the argument in Beiglb\"ock, Schachermayer and Veliyev \cite{BSV} can deduce a variation of the classical Doob-Meyer decomposition which serves well our purpose. In this subsection, we will quickly review their result and prove the decomposition theorem (Proposition \ref{prop: DM on D}).

Let $Y$ be a $\dbP$-supermartingale for some probability measure $\dbP$. Denote 
$$\cD_n:=\Big\{\frac{j}{2^n}:j\in\dbN,\frac{j}{2^n}\le T\Big\}\q \mbox{and}\q
\cD := \cup_n \cD_n.$$
For each $n$, we have the discrete time Doob-Meyer decomposition:
\beaa
Y_t = Y_0 + M^n_t -A^n_t, ~\mbox{for all}~t\in \cD^n,~\dbP\mbox{-a.s.}
\eeaa
According to Lemma 2.1 and 2.2 in \cite{BSV}, we have:
\begin{lem}\label{lem:BSV}
{\rm (i).}\q  Let $\{f_n\}_{n\ge 1}$ be a $\dbP$-uniformly integrable sequence of functions. Then there exists functions $g_n\in \mbox{conv}(f_n,f_{n+1},\cds)$ such that $\{g_n\}_{n\ge 1}$ converges in $\|\cd\|_{L^1(\dbP)}$.

\no {\rm (ii).}\q Assume that $\{Y_\t\}_{\t\in\cT_\cD}$ is $\dbP$-uniformly integrable, where $\cT_\cD$ is the set of stopping times in $\cT_*$ taking values in $\cD$. Then the sequence $\{M^n_T\}_{n\ge 1}$ is $\dbP$-uniformly integrable.
\end{lem}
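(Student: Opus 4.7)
The plan is to handle the two items essentially separately: part (i) is a standard Banach-space fact, and part (ii) is a stopping-time argument that exploits the predictability of the compensator $A^n$.

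For (i), I would first invoke the Dunford-Pettis theorem: a $\dbP$-uniformly integrable sequence in $L^1(\dbP)$ is relatively weakly compact, so there is a subsequence $f_{n_k}\rightharpoonup f_\infty$ weakly in $L^1(\dbP)$. By Mazur's lemma, $f_\infty$ lies in the $\|\cd\|_{L^1}$-closure of $\mathrm{conv}\{f_{n_j}:j\ge k\}$ for every $k$, so one can select, for each $n$, a convex combination $g_n$ of finitely many $f_{n_k}$ with $n_k\ge n$ satisfying $\|g_n-f_\infty\|_{L^1(\dbP)}\le 1/n$. Such $g_n$ automatically lie in $\mathrm{conv}(f_n,f_{n+1},\cds)$ and form a Cauchy, hence norm-convergent, sequence in $L^1(\dbP)$.

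For (ii), the starting point is the identity $M^n_T=Y_T-Y_0+A^n_T$ coming from the discrete Doob-Meyer decomposition at $T\in\cD_n$. Since $Y_0$ is deterministic and $Y_T$ is integrable (the hypothesis applied at the constant times $\tau\equiv 0$ and $\tau\equiv T$, both in $\cT_\cD$), uniform integrability of $\{M^n_T\}$ reduces to that of the nonnegative sequence $\{A^n_T\}$. Taking expectations in $\dbE[M^n_T]=0$ gives $\dbE[A^n_T]=Y_0-\dbE[Y_T]\le C_0$, hence the uniform tail bound $\dbP[A^n_T>c]\le C_0/c$. To promote this $L^1$-bound to uniform integrability I exploit predictability: $A^n_{t_{k+1}}$ is $\cF_{t_k}$-measurable, so the random time
\[
\rho^n_c := \inf\bigl\{t_k\in\cD_n\cap[0,T):A^n_{t_{k+1}}>c\bigr\}\we T
\]
is a genuine $\dbF$-stopping time in $\cT_\cD$, satisfies $A^n_{\rho^n_c}\le c$ pathwise by construction, and obeys $\{\rho^n_c<T\}=\{A^n_T>c\}$. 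Splitting $A^n_T=A^n_{\rho^n_c}+(A^n_T-A^n_{\rho^n_c})$ and applying discrete optional sampling to $M^n$ yields $\dbE[(A^n_T-A^n_{\rho^n_c})\1_B]=\dbE[(Y_{\rho^n_c}-Y_T)\1_B]$ for $B\in\cF_{\rho^n_c}$, whence
\[
\dbE[A^n_T;A^n_T>K]\le c\,\dbP[A^n_T>K]+\dbE\bigl[(|Y_{\rho^n_c}|+|Y_T|)\1_{\rho^n_c<T}\bigr].
\]
Given $\varepsilon>0$, I would first pick $c$ large enough that $\dbP[\rho^n_c<T]\le C_0/c$ is small enough to force the second term below $\varepsilon/2$ uniformly in $n$, by the assumed $\dbP$-uniform integrability of $\{Y_\tau\}_{\tau\in\cT_\cD}$ applied at $\tau=\rho^n_c$ and $\tau=T$; then $K$ large enough that $cC_0/K<\varepsilon/2$.

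The central difficulty is the \emph{overshoot} at the passage time. With the naive hitting time $\tilde\rho^n_c=\inf\{t:A^n_t\ge c\}$, the value $A^n_{\tilde\rho^n_c}$ can lie arbitrarily far above $c$, because the predictable jumps of $A^n$ are a priori unbounded, so nothing useful comes out of the split $A^n_T=A^n_{\tilde\rho^n_c}+(A^n_T-A^n_{\tilde\rho^n_c})$. The stopping time $\rho^n_c$ above instead stops one step \emph{before} $A^n$ crosses level $c$, keeping $A^n_{\rho^n_c}\le c$ by definition; the predictability of $A^n$ is precisely what makes this an $\dbF$-stopping time and thus admissible in discrete optional sampling.
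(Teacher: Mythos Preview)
The paper does not supply its own proof of this lemma; it is quoted verbatim from Lemmas~2.1 and~2.2 of Beiglb\"ock--Schachermayer--Veliyev \cite{BSV}. Your argument is correct and is essentially the one given there: part~(i) via Dunford--Pettis weak $L^1$-compactness together with Mazur's lemma, and part~(ii) via the look-ahead stopping time $\rho^n_c$ that uses predictability of the discrete compensator to cap $A^n_{\rho^n_c}\le c$, followed by optional sampling and the assumed uniform integrability of $\{Y_\tau\}_{\tau\in\cT_\cD}$ on the small set $\{\rho^n_c<T\}$.
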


Then following the same argument as in \cite{BSV}, we obtain the following result.
\begin{prop}\label{prop: DM on D}
Let $Y$ be $\dbP$-supermartingale such that $\{Y_\t\}_{\t\in\cT_\cD}$ is $\dbP$-uniformly integrable. Then there exists a martingale $M$ and an adapted non-decreasing process $A$ both starting from $0$ such that
\be\label{DM on D}
Y_t=Y_0+M_t-A_t,~~\mbox{for all}~~t\in\cD,~\dbP\mbox{-a.s.}
\ee
\end{prop}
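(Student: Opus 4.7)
The plan is to mimic the Beiglb\"ock--Schachermayer--Veliyev argument and pass from the discrete-time compensators on each dyadic grid to an $L^1$-limit. For each $n$, the classical discrete Doob decomposition on the finite grid $\cD_n$ writes
$$Y_t = Y_0 + M^n_t - A^n_t, \qquad t\in\cD_n,$$
where $A^n$ is the predictable non-decreasing compensator whose increments are $A^n_{(j+1)/2^n}-A^n_{j/2^n}=\dbE[Y_{j/2^n}-Y_{(j+1)/2^n}|\cF_{j/2^n}]$ and $M^n$ is the corresponding martingale on $\cD_n$. Lemma 5.3(ii) ensures that $\{M^n_T\}_{n\ge 1}$ is $\dbP$-uniformly integrable, and Lemma 5.3(i) then produces convex combinations $\tilde M^n_T\in \mbox{conv}(M^n_T, M^{n+1}_T,\ldots)$ converging in $L^1(\dbP)$ to some $\xi$. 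I would then define the candidate martingale $M_t := \dbE[\xi|\cF_t]$ and set
$$A_t := Y_0 + M_t - Y_t, \qquad t\in\cD,$$
so that the decomposition $Y_t=Y_0+M_t-A_t$ holds on $\cD$ by construction.

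It remains to verify that $A$ is non-decreasing on $\cD$ and that $M_0=A_0=0$. Let $\tilde A^n_t$ denote the convex combination of $(A^k_t)_{k\ge n}$ formed with the \emph{same} weights used in $\tilde M^n_T$. For every $t\in\cD$ we have $t\in\cD_n$ for all sufficiently large $n$, and there $\tilde A^n_t = Y_0 + \tilde M^n_t - Y_t$ where $\tilde M^n_t = \dbE[\tilde M^n_T|\cF_t]$. Since conditional expectation is an $L^1$-contraction, $\tilde M^n_t\to M_t$ in $L^1$, hence $\tilde A^n_t\to A_t$ in $L^1$. Each $A^k$ being non-decreasing on $\cD_k$, the convex combination $\tilde A^n$ is non-decreasing on $\cD_n$, and this monotonicity passes to the $L^1$-limit. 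A diagonal extraction over the countable set $\cD$ then yields a single subsequence along which the convergence is $\dbP$-a.s.\ at every dyadic time simultaneously, so on a set of full measure $A_s\le A_t$ for all $s\le t$ in $\cD$. The martingale property $\tilde M^n_0=0$ combined with $L^1$-convergence gives $M_0=\dbE[\xi|\cF_0]=0$ a.s., whence $A_0=Y_0+M_0-Y_0=0$.

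The main obstacle is organising the convex combinations so that a single choice of weights works simultaneously for the martingale part and the non-decreasing part: one must form $\tilde M^n$ and $\tilde A^n$ with identical weights, ensuring that the identity $\tilde A^n_t = Y_0+\tilde M^n_t-Y_t$ survives at every dyadic $t$ and thereby transports both the monotonicity of each $A^k$ on $\cD_k$ and the $L^1$-convergence of the martingales into the limit process $A$. The diagonal extraction across all countably many dyadic times is the second delicate point---$L^1$-convergence only yields a.s.\ convergence along $t$-dependent subsequences---but countability of $\cD$ renders it routine.
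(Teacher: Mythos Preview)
Your proposal is correct and follows essentially the same route as the paper: form convex combinations of the discrete martingale terminal values via Lemma~\ref{lem:BSV}, pass to an $L^1$-limit, and read off the non-decreasing property of $A$ on $\cD$ from the $L^1$-convergence of the corresponding convex combinations of the $A^k$. The only cosmetic differences are that the paper does not invoke a diagonal extraction (countability of $\cD$ plus $L^1$-convergence already gives $A_s\le A_t$ a.s.\ for each dyadic pair, hence for all pairs on a common full-measure set), and that the paper closes with the extra step $A_t:=\sup_{s\le t,\,s\in\cD}\bar A_s$ to obtain a globally non-decreasing process on $[0,T]$, which is not strictly needed for the statement as written.
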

\begin{proof}
For each $n$, extend $M^n$ to a cadlag martingle on $[0,T]$ by setting $M^n_t:=\dbE^\dbP[M^n_T|\cF_t]$. By Lemma \ref{lem:BSV}, there exist $M\in L^1(\dbP)$ and for each $n$ convex weights $\l^n_n,\cds, \l^n_{N_n}$ such that with
\beaa
\cM^n := \l^n_n M^n+\cds+\l^n_{N_n} M^{N_n}
\eeaa
we have $\cM^n_1\rightarrow M$ in $L^1(\dbP)$. Then, by Jensen's inequality, $\cM^n_t\rightarrow M_t:=\dbE^\dbP[M|\cF_t]$ for all $t\in [0,T]$. For each $n$ we extend $A^n$ to $[0,T]$ by $A^n:=\sum_{t\in\cD_n}A^n_t 1_{(t-\frac{1}{2^n},t]}$ and set:
\beaa
\cA^n := \l^n_n A^n+\cds+\l^n_{N_n} A^{N_n}.
\eeaa
Then the process $\bar A:=M+Y_0-Y$ satisfies for every $t\in\cD$
\beaa
\cA^n_t=\cM^n_t +Y_0-Y_t\longrightarrow M_t+Y_0-Y_t=\bar A_t~~\mbox{in}~L^1(\dbP).
\eeaa
Therefore, $\bar A$ is a.s. non-decreasing on $\cD$, $\dbP$-a.s. Finally, the process $A_t:= \sup_{s\le t,s\in\cD}\bar A_s$ is non-decreasing on $[0,T]$, $\dbP$-a.s., and satisfies \eqref{DM on D}.
\end{proof}

\begin{rem}{\rm
In \cite{BSV}, by further assuming that $Y$ is cadlag and in class D, we may get the decomposition on $[0,T]$, and prove that process $A$ is previsible.
}
\end{rem}

\subsection{Skorokhod decomposition for lower semicontinuous functions}\label{sec:skorokhod}

\begin{lem}\label{lem:skorokhod}
Let $\l:[0,T]\rightarrow \dbR$ be lower semicontinuous (l.s.c.) with $\l_0=0$, and define
\beaa
\k_t := \max_{s\le t} \l^-_s = - \min_{s\le t} \l_s
\q\mbox{and}\q
\eta_t := \l_t + \max_{s\le t} \l^-_s.
\eeaa
Then,

\no {\rm (i).}\q $\eta$ is non-negative and $\k$ is non-decreasing, such that
\beaa
\eta_0=\k_0=0,\q \l_t = \eta_t - \k_t ~~ \mbox{for all}~~t\in [0,T].
\eeaa

\no {\rm (ii).}\q $\eta$ is l.s.c., $\k$ is right continuous, and it holds that
\beaa
\int_0^T 1_{\{\eta_t \neq 0\}} d\k_t=0.
\eeaa

\no {\rm (iii).}\q for all other non-negative function $\eta'$ and  non-decreasing function $\k'$ satisfying (i), it holds
\beaa
\k_t\le \k'_t \q\mbox{for all}\q t\in [0,T].
\eeaa
\end{lem}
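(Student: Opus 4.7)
The plan is to verify the three parts in sequence, working directly from the definitions. Part (i) is essentially bookkeeping: $\k_0 = \l^-_0 = 0$ and $\eta_0 = 0$ are immediate from $\l_0 = 0$; $\k$ is non-decreasing as a running supremum; $\eta_t = \l_t - \min_{s\le t}\l_s \ge 0$; and $\l_t = \eta_t - \k_t$ is just the definition of $\eta$. This part needs no serious work.

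For part (ii), the key observation is that $\l$ l.s.c.\ implies $\l^-$ u.s.c., hence $\max_{s\le t}\l^-_s$ is attained on each compact $[0,t]$. Right continuity of $\k$ then follows from a standard subsequence argument: given $s_n\da t$, pick $r_n \in [0,s_n]$ with $\l^-_{r_n} = \k_{s_n}$, extract a subsequence $r_{n_k}\to r^* \le t$, and use u.s.c.\ to get $\k_t \ge \l^-_{r^*} \ge \limsup \l^-_{r_{n_k}} = \limsup \k_{s_{n_k}}$. For l.s.c.\ of $\eta$, the central point is that if $\k$ jumps at $t$, i.e.\ $\k_t > \k_{t-}$, then necessarily $\l^-_t = \k_t$, whence $\l_t = -\k_t$ and $\eta_t = 0$; so at jump points the inequality $\liminf_{s\to t}\eta_s \ge \eta_t = 0$ is trivial because $\eta \ge 0$, while at points where $\k$ is continuous one just adds the l.s.c.\ inequality for $\l$ to the continuity of $\k$.

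The main work is the minimality integral $\int_0^T 1_{\{\eta_t \ne 0\}}\,d\k_t = 0$, and I expect this to be the only nontrivial step. The plan is to first check that the set $E := \{t : \eta_t > 0\}$ is (relatively) open in $[0,T]$: at any $t \in E$ the argument of the previous paragraph rules out a jump of $\k$ at $t$, so $\k$ is continuous at $t$; combined with l.s.c.\ of $\l$ and $\eta_t = \l_t + \k_t > 0$, one gets a neighborhood on which $\eta$ stays bounded below by $\eta_t/4 > 0$. Writing $E = \bigsqcup_i (a_i,b_i)$, I then suppose for contradiction that $\k_{b_i^-} > \k_{a_i}$ for some $i$. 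Then there is $t_0 \in (a_i,b_i)$ with $\l^-_{t_0} > \k_{a_i}$; by u.s.c.\ of $\l^-$, the maximum of $\l^-$ on $[a_i, t_0]$ is attained at some $r^* \in (a_i,t_0] \subset (a_i, b_i)$, and at that $r^*$ one has $\k_{r^*} = \l^-_{r^*} = -\l_{r^*}$, hence $\eta_{r^*} = 0$, contradicting $r^* \in E$. Therefore $\k$ is constant on each $(a_i,b_i)$, and since $\k$ is right continuous, $d\k(E) = \sum_i (\k_{b_i^-} - \k_{a_i}) = 0$.

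Part (iii) is short: given any decomposition $\l_t = \eta'_t - \k'_t$ with $\eta' \ge 0$ and $\k'$ non-decreasing starting from $0$, one has $\l^-_t = \max(-\l_t,0) = \max(\k'_t - \eta'_t, 0) \le \k'_t$, and taking the running sup gives $\k_t = \max_{s\le t}\l^-_s \le \max_{s\le t}\k'_s = \k'_t$. The only delicate step in the whole proof is the argument for the support of $d\k$, where the interplay between u.s.c.\ of $\l^-$ (to attain maxima) and l.s.c.\ of $\l$ (to make $E$ open) is essential.
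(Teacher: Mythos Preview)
Your proof is correct and follows the same overall strategy as the paper's: show $\k$ is right continuous, show $\eta$ is l.s.c.\ so that $\{\eta>0\}$ is open, then show $\k$ is constant on each connected component. The implementation differs in a few places, and in each case your version is a little more direct. For l.s.c.\ of $\eta$, the paper shows each level set $\{\eta>\e\}$ is open by proving, via a case-by-case contradiction, that $\l>-\k+\e$ on a whole neighborhood of any point where $\eta>\e$; your case split on whether $\k$ jumps at $t$ (with the clean observation that a jump of $\k$ forces $\eta_t=0$) is shorter and more transparent. For part (iii) the paper argues by contradiction through the last zero of $\eta$ before $t$, whereas your one-line chain $\l^-_s=(\k'_s-\eta'_s)^+\le \k'_s$ followed by taking running suprema is the standard direct proof and avoids any detour. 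For the Skorokhod condition the two arguments coincide in spirit; the paper simply asserts $\k_{t_n-}-\k_{s_n}=0$ on each component, and your explicit contradiction (locate $r^*$ where the running maximum of $\l^-$ is freshly attained, hence $\eta_{r^*}=0$) is precisely the justification the paper omits.
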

\begin{proof}
(i) is trivial. We only prove (ii) and (iii).

\no (ii).\q First, we claim that
\be\label{min lambda}
\min_{r\le t}\l_r = \liminf_{s\rightarrow t}\min_{r\le s}\l_r.
\ee
Since $\l_t = \liminf_{s\rightarrow t}\l_s$, it is clear that $\min_{r\le t}\l_r \ge \liminf_{s\rightarrow t}\min_{r\le s}\l_r$. On the other hand, we  have
$$\min_{r\le t-\e}\l_r \le \liminf_{s\rightarrow t}\min_{r\le s}\l_r,\q\mbox{for all}~~\e>0.$$
It implies that $\inf_{r < t}\l_r \le \liminf_{s\rightarrow t}\min_{r\le s}\l_r$. Again by $\l_t = \liminf_{s\rightarrow t}\l_s$, we obtain that $\inf_{r < t}\l_r \ge \min_{r\le t}\l_r$. So we proved \eqref{min lambda}. Consequently, by the definition of $\k$, we have $\k_t = \limsup_{s\rightarrow t}\k_s$. Taking into account that $\k$ is non-decreasing, we obtain that $\k_t = \lim_{s\downarrow t}\k_s$.

 For any $\e>0$, take $t\in \big\{s:\eta_s> \e\big\}$, i.e.
\beaa
\l_t + a >\e,~\mbox{where}~a:=\k_t.
\eeaa
Since $\l$ is l.s.c., the set $\big\{s:\l_s>-a+\e\big\}$ is open. Thus, there is an open neighborhood $O_t$ of $t$ on which $\l>-a +\e$. We claim that 
\be\label{O_t}
\l > -\k +\e \q \mbox{on}\q O_t.
\ee
Suppose to the contrary, i.e. there exists $\bar t\in O_t$ such that $\l_{\bar t}\le -\k_{\bar t}+\e$. If $\bar t\ge t$, then $\l_{\bar t}\le -\k_{\bar t}+\e\le -\k_t+\e = -a+\e$, which is a contradiction. Otherwise, if $\bar t<t$, since $-a+\e<\l_{\bar t}\le -\k_{\bar t}+\e$, we obtain that $\k_{\bar t}< a$. However, since $\k_t=a$, there exists $\hat t\in [\bar t, t]$ such that $\l_{\hat t} = -a$, which is also a contradiction. So we proved \eqref{O_t}. It follows that $\big\{s:\eta_s> \e\big\}$ is open for all $\e>0$, and thus $\eta$ is l.s.c. 

On the other hand, since $\big\{s:\eta_s> \e\big\}$ is open, it can be written as the union of a countable number of open intervals, i.e. $\big\{s:\eta_s> \e\big\} = \cup_n (s_n,t_n)$. Since $(s_n,t_n)\subset \big\{s:\eta_s> \e\big\}$, we clearly have $\k_{t_n-}-\k_{s_n}=0$. Further, we have 
$$\int_0^T 1_{\{\eta_s >\e \}}d\k_s=\sum_n (\k_{t_n-}-\k_{s_n})=0.$$
Finally, it follows from the monotone convergence theorem that $\int_0^T 1_{\{\eta_s > 0 \}}d\k_s =0$.

\ms
\no (iii).\q Assume to the contrary, i.e. let $t\in (0,T]$ such that $\k_t>\k'_t$. Take $s^*:=\sup\{s\le t: \eta_s =0\}$. Since $\eta$ is non-negative and l.s.c., the set $\{\eta=0\}$ is closed, and therefore, $\eta_{s^*}=0$. Also, since $(s^*,t]\subset \{\eta>0\}$, we have $\k_t-\k_{s^*}=0$. Then,
\beaa
\eta'_{s^*} = \eta_{s^*} - \k_{s^*} + \k'_{s^*}\le  \k'_t-\k_t < 0,
\eeaa
contradiction.
\end{proof}

\subsection{Optimal stopping for upper semicontinuous barriers}

\begin{lem}\label{lem:UI}
$Y$ is an $\dbF^*$-adapted $\ol\cE_L$-supermartingale. Moreover, $\{Y_\t\}_{\t\in \cT_\cD}$ is $\ol\cE_L$-uniformly integrable.
\end{lem}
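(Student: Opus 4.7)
The proof splits into three claims: $\dbF^*$-adaptedness, the nonlinear supermartingale inequality $\ol\cE_L[Y_s\mid\cF_t]\le Y_t$ for $t\le s$, and a two-sided bound on $Y_\tau$ by $\ol\cE_L$-u.i.\ random variables. Adaptedness is immediate, since at each fixed $t$ the quantity $Y_t$ is an essential supremum of $\cF_t^*$-measurable random variables.

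For the supermartingale inequality, fix $t\le s$. For every $\tau\in\cT_*^s\subset\cT_*^t$, the tower property (Lemma \ref{lem: tower}) and the definition of $Y_t$ yield
\beaa
Y_t\;\ge\;\ol\cE_L[X_\tau\mid\cF_t]\;=\;\ol\cE_L\big[\ol\cE_L[X_\tau\mid\cF_s]\mid\cF_t\big].
\eeaa
To push the ess-sup inside the outer $\ol\cE_L$, I would establish upward-directedness of the family $\{\ol\cE_L[X_\tau\mid\cF_s]:\tau\in\cT_*^s\}$ by the standard pasting argument: given $\tau_1,\tau_2\in\cT_*^s$ and $A:=\{\ol\cE_L[X_{\tau_1}\mid\cF_s]\ge\ol\cE_L[X_{\tau_2}\mid\cF_s]\}\in\cF_s$, the stopping time $\tau_1 1_A+\tau_2 1_{A^c}\in\cT_*^s$ realizes the pointwise maximum of the two conditional expectations, using the stability of $\cP$ under concatenation at $\cF_s$-measurable times. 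Monotone convergence under $\ol\cE_L$ (obtained from Lemma \ref{lem: dominate_conv} applied to the nonnegative increments, with the lower control $X_T^-$ supplied by assumption (iii)) then lets me pass $Y_s=\lim_n\ol\cE_L[X_{\tau_n}\mid\cF_s]$ inside the outer expectation and deliver $\ol\cE_L[Y_s\mid\cF_t]\le Y_t$.

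For uniform integrability of $\{Y_\tau\}_{\tau\in\cT_\cD}$, I would sandwich $|Y_\tau|$ between two $\ol\cE_L$-u.i.\ random variables. The upper side $Y_\tau^+\le \ol\cE_L[\sup_t X_t^+\mid\cF_\tau]$ is immediate from assumption (ii). For the lower side, choosing $\sigma\equiv T\in\cT_*^\tau$ gives $Y_\tau\ge \ol\cE_L[X_T\mid\cF_\tau]\ge -\ol\cE_L[X_T^-\mid\cF_\tau]$, hence $Y_\tau^-\le \ol\cE_L[X_T^-\mid\cF_\tau]$, and $X_T^-$ is $\ol\cE_L$-u.i.\ by assumption (iii). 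It remains to invoke the standard fact that $\ol\cE_L$-uniform integrability of $\xi\ge 0$ is preserved under the conditional family $\{\ol\cE_L[\xi\mid\cF_\tau]\}_{\tau\in\cT_*}$; this follows from the uniform $L^p(\dbP_0)$-boundedness of the Radon--Nikodym densities of $\cP$ (all drifts $\mu$ are bounded by $L$) combined with a H\"older split on the tail event.

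The main obstacle is the first of these two ``standard'' manipulations under $\ol\cE_L$: the exchange of the ess-sup with $\ol\cE_L[\,\cdot\mid\cF_t]$ in the dynamic programming step. It rests on the pasting-stability of $\cP$ (which gives upward-directedness of the conditional family) together with careful bookkeeping of null sets simultaneously under all $\dbP\in\cP$; once this is in hand, the UI sandwich is routine.
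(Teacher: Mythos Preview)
Your proposal is correct and follows essentially the same route as the paper: the supermartingale property is treated as a standard dynamic-programming/upward-directedness argument (the paper simply writes ``by standard argument''), and the $\ol\cE_L$-uniform integrability is obtained by the same two-sided sandwich $Y_\t^+\le \ol\cE_L[\sup_t X_t^+\mid\cF_\t]$ and $Y_\t^-\le \ol\cE_L[X_T^-\mid\cF_\t]$. The only cosmetic difference is that the paper reaches the lower bound via the $\dbP$-submartingale property of $Y^-$ and optional sampling, whereas you obtain it directly from $Y_\t\ge \ol\cE_L[X_T\mid\cF_\t]$; both yield the identical estimate.
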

\begin{proof}
By standard argument, one may prove the first part of the lemma. We are going to prove the second part, by showing that $\{Y^+_\t\}_{\t\in\cT_\cD}$ and $\{Y^+_\t\}_{\t\in\cT_\cD}$ are both $\ol\cE_L$-uniformly integrable.

\no {\bf 1.}\q By the definition of $Y$, it is clear that $Y_t \le \ol\cE_L[\sup_{s\in [0,T]}X_s|\cF_t]$. Further, by Jensen's inequality, it follows that $Y^+_t \le \ol\cE_L[\sup_{s\in [0,T]}X^+_s|\cF_t]$. Then for all $\t\in\cT_\cD$ we have
\beaa
Y^+_\t~ \le ~\ol\cE_L[\sup_{s\in [0,T]}X^+_s|\cF_\t],\q \dbP_0\mbox{-a.s.}
\eeaa
By  (ii) of the assumptions of Theorem \ref{thm: OS},  it is easy to prove that $\{Y^+_\t\}_{\t\in\cT_\cD}$ is $\ol\cE_L$-uniformly integrable.

\no {\bf 2.}\q Since $Y$ is a $\dbP$-supermartingale for all $\dbP\in\cP$, $Y^-$ is a $\dbP$-submartingale for all $\dbP\in\cP$. Consequently, we have
\beaa
Y^-_\t~ \le ~\ol\cE_L[Y^-_T|\cF_\t] ~ = ~\ol\cE_L[X^-_T|\cF_\t].
\eeaa
By (iii) of the assumptions of Theorem \ref{thm: OS}, one may easily prove that $\{Y^-_\t\}_{\t\in\cT_\cD}$ is $\ol\cE_L$-uniformly integrable.
\end{proof}

\begin{rem}{\rm
In the previous proof, it is crucial to consider the $\ol\cE_L$-uniform integrability of $\{Y_\t\}_{\t\in\cT_\cD}$ instead of $\{Y_\t\}_{\t\in\cT_*}$.
}
\end{rem}

\begin{lem}
$Y$ has a left continuous version.
\end{lem}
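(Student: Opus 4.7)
The plan is to construct a candidate left-continuous process by taking left limits of $Y$ along the dyadic set $\cD$, and then to verify that this candidate coincides with $Y$ $\dbP_0$-almost surely by exploiting the upper semicontinuity of $X$ in $t$.

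First, by Lemma \ref{lem:UI}, $Y$ is an $\ol\cE_L$-supermartingale and $\{Y_\t\}_{\t\in\cT_\cD}$ is $\ol\cE_L$-uniformly integrable, hence in particular $\dbP_0$-uniformly integrable. Classical results for discrete-parameter supermartingales (the upcrossing inequality) then ensure that outside a $\dbP_0$-null set the pathwise left limit
\beaa
\tilde Y_t(\o)~:=~\lim_{s\in\cD,\,s\uparrow t} Y_s(\o),\qquad t\in(0,T],
\eeaa
exists; I set $\tilde Y_0:=Y_0$. Since $\cD$ is countable, $\tilde Y$ is automatically left continuous on a full-measure subset of $\O$.

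Second, I verify that $\tilde Y$ is a modification of $Y$. The inequality $\tilde Y_t\ge Y_t$ $\dbP_0$-a.s.\ follows from the supermartingale property $Y_s\ge \ol\cE_L[Y_t|\cF_s]$ for $s\in\cD\cap[0,t)$ together with the tower relation of Lemma \ref{lem: tower} and the dominated convergence afforded by the $\ol\cE_L$-uniform integrability of $\{Y_\t\}_{\t\in\cT_\cD}$.

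Third and most delicate, I prove the reverse inequality $\tilde Y_t\le Y_t$ $\dbP_0$-a.s. Fix $\e>0$ and a sequence $s_n\uparrow t$ in $\cD$. The dynamic programming formulation of $Y_{s_n}$ yields $\dbP_n\in\cP$ and $\t_n\in\cT_*^{s_n}$ with
\beaa
Y_{s_n}~\le~\dbE^{\dbP_n}[X_{\t_n}|\cF_{s_n}]+\e.
\eeaa
Substituting $\hat\t_n:=\t_n\vee t\in\cT_*^t$ (which is admissible for the problem defining $Y_t$), the discrepancy $X_{\t_n}-X_{\hat\t_n}$ is supported on $\{\t_n<t\}\subset\{\t_n\in[s_n,t)\}$, and on this event the upper semicontinuity of $X$ in $t$ forces $\limsup_n X_{\t_n}\mathbf 1_{\{\t_n<t\}}\le X_t\le Y_t$ $\dbP_0$-a.s. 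Combining this with Fatou's Lemma (Lemma \ref{lem: Fatou}), the tower property, and the bound $\dbE^{\dbP_n}[X_{\hat\t_n}|\cF_t]\le Y_t$, one extracts $\tilde Y_t\le Y_t+\e$, and letting $\e\downarrow 0$ concludes.

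The main obstacle is the third step: the approximate optimizers $(\dbP_n,\t_n)$ vary with $n$ and the conditional nonlinear expectation $\ol\cE_L[\cd|\cF_{s_n}]$ is not linear, so passing to the limit requires careful handling via the nonlinear Fatou lemma and $\ol\cE_L$-uniform integrability. The replacement of $\t_n$ by $\hat\t_n\in\cT_*^t$, made legitimate precisely by the u.s.c.\ of $X$, is the key step that turns the left limit bound into a bound by the value at $t$.
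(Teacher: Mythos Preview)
Your overall strategy is sound and reaches the same conclusion, but it differs from the paper's route in one key respect. The paper does \emph{not} attempt a direct $\dbP_0$-a.s.\ bound $\tilde Y_t\le Y_t$ via approximate conditional optimizers. Instead it reduces everything to scalars: it proves
\[
\ul\cE_L[Y_s-Y_t]\ \le\ \ol\cE_L\big[(\sup_{s\le r\le t}X_r - X_t)^+\big]\ \xrightarrow[s\uparrow t]{}\ 0,
\]
using u.s.c.\ of $X$ and dominated convergence for the unconditional nonlinear expectation. It then argues by contradiction: if $\dbP_0[Y_{t-}>Y_t]>0$ one gets $\dbE^{\dbP_0}[\sqrt{Y_{t-}-Y_t}]>0$, hence $\ul\cE_L[Y_{t-}-Y_t]>0$, contradicting the limit just established. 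This avoids entirely the varying measures $\dbP_n$ and varying $\sigma$-fields $\cF_{s_n}$.

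Your Step~3, by contrast, needs two nontrivial pathwise limits: (i) $\ol\cE_L[Y_t\mid\cF_{s_n}]\to Y_t$ $\dbP_0$-a.s., and (ii) $\ol\cE_L[(\sup_{s_n\le r\le t}X_r-X_t)^+\mid\cF_{s_n}]\to 0$ $\dbP_0$-a.s. Both are true --- they follow from the continuity in $s$ of $\ol\cE_L[\zeta\mid\cF_s]$ (a BSDE with Lipschitz driver) together with $\cF^*_{t-}=\cF^*_t$ --- but they are not consequences of the unconditional Fatou Lemma~\ref{lem: Fatou} you cite, and they are not otherwise established in the paper. Also, a minor slip: the bound ``$\limsup_n X_{\t_n}\1_{\{\t_n<t\}}\le X_t$'' is false when $X_t<0$ and $\t_n\ge t$ eventually; what you actually need (and what your argument uses) is $\limsup_n (X_{\t_n}-X_t)\1_{\{\t_n<t\}}\le 0$.

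In short: your argument can be made rigorous, but it leans on pathwise continuity of nonlinear conditional expectations that you invoke without proof. The paper sidesteps all of this by passing to $\ul\cE_L$-expectations first and using the square-root trick to return to the a.s.\ statement --- a shorter and more self-contained device.
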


\begin{proof}
{\bf 1.}\q We first prove $\lim_{s\uparrow t}\ul\cE_L[Y_s-Y_t]=0$.
Since $Y$ is a supermartingale, it is sufficient to prove that 
\be\label{eq: EY}
\limsup_{s\uparrow t} \ul\cE_L[Y_s-Y_t]\le 0.
\ee
Since $Y\ge X$, $\dbP_0$-a.s., it follows from Lemma \ref{lem: tower} that
\beaa
\ul\cE_L[Y_s-Y_t] & = & \ul\cE_L\Big[\esup_{\t\in\cT_*^s}\ol\cE_L[X_\t|\cF_s]-\ol\cE_L[Y_t|\cF_s]\Big]
\\
&\le & \ul\cE_L\Big[\esup_{\t\in\cT_*^s} 
	\ol\cE_L\big[X_\t 1_{\{\t<t\}}+Y_t 1_{\{\t\ge t\}}\big|\cF_s\big]-\ol\cE_L[Y_t|\cF_s]\Big]
\\
&\le &	\ol\cE_L\Big[\esup_{\t\in\cT_*^s}\ol\cE_L[(X_\t-Y_t)^+|\cF_s]\Big]
\\
&\le & \ol\cE_L\Big[\ol\cE_L[(\ol X_s^t-Y_t)^+|\cF_s]\Big]
~ =~ \ol\cE_L[(\ol X_s^t-X_t)^+],
\eeaa
where $\ol X^t_s:=\sup_{s\le r\le t}X_r$. Since $X$ is u.s.c. in $t$, it holds that 
$\lim_{s\uparrow t}\ol X^t_s\le X_t$. Further, in view of (ii) and (iii) of the assumptions of Theorem \ref{thm: OS}, \eqref{eq: EY} follows from Lemma \ref{lem: dominate_conv}.

\ms
\no {\bf 2.}\q It follows from Lemma \ref{lem:UI} that $Y$ is a $\dbP_0$-supermartingale in the continuous filtration $\dbF^*$. By classical martingale theory, we know that for any $t\in[0,T)$,
$$Y_{t-}:=\lim_{s\upuparrows t,s\in\cD}Y_s\text{ exists }\dbP_0\text{-a.s.},$$
and that $\{Y_{t-}\}_t$ is left continuous and $Y_t=\dbE[Y_t|\cF^*_{t-}]\leq Y_{t-}$, $\dbP_0$-a.s. We next show that $Y_{t-}=Y_t$, $\dbP_0$-a.s. Suppose to the contrary that $\dbP_0[Y_t<Y_{t-}]>0$. Then, we have $\dbE^{\dbP_0}\big[\sqrt{Y_{t-}-Y_t}\big]>0$, implying that $\ul\cE_L[Y_{t-}-Y_t]>0$.  On the other hand, it follows from the result of Step 1 and Lemma \ref{lem:UI} that
\beaa
0 = \lim_{s\upuparrows t,s\in\cD}\ul\cE_L[Y_s-Y_t] =\ul\cE_L[Y_{t-}-Y_t]>0,
\eeaa
contradiction.
\end{proof}

Then following the discussion in Section \ref{sec:DM},  we can show that:

\begin{lem}\label{lem: DM}
For all $\dbP\in\cP$, there exists a $\dbP$-martingale $M^\dbP$ and a non-decreasing process $A^\dbP$ such that
\be\label{DM decom}
Y_t = Y_0+M^\dbP_t-A^\dbP_t,~\mbox{for all}~t\in [0,T],~\dbP_0\mbox{-a.s.}
\ee
In particular, there exists $Z$ such that $M^{\dbP_0}=\int_0^\cd Z_tdB_t$, $\dbP_0$-a.s. Moreover, for $\dbP_\mu\in\cP$, it holds that $M^{\dbP} = M^{\dbP_0}-\int_0^\cd \mu_t\cd Z_tdt$. In particular, there exists $\dbP^*:=\dbP_{\mu^*}$ such that $M^{\dbP^*}$ is a $\dbP$-supermartingale for all $\dbP\in\cP$.
\end{lem}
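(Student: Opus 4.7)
The plan is to assemble the decomposition in three stages: first, apply Proposition \ref{prop: DM on D} for each $\dbP\in\cP$ and extend the dyadic decomposition to $[0,T]$ using the left continuity of $Y$ established just above; second, apply the martingale representation theorem under $\dbP_0$ and use Girsanov to transfer the decomposition to each $\dbP_\mu\in\cP$; third, choose $\mu^*$ to maximize $Z\cd\mu$ pointwise and verify the supermartingale property.

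For the first stage, I observe that the $\ol\cE_L$-uniform integrability of $\{Y_\t\}_{\t\in\cT_\cD}$ from Lemma \ref{lem:UI} implies $\dbP$-uniform integrability for every $\dbP\in\cP$, so Proposition \ref{prop: DM on D} applies and yields a $\dbP$-martingale $\bar M$ together with a non-decreasing adapted process $\bar A$ on $\cD$ with $Y_t = Y_0 + \bar M_t - \bar A_t$, $\dbP_0$-a.s. I would extend $\bar M$ to a cadlag $\dbP$-martingale $M^\dbP$ on $[0,T]$ by the standard martingale regularization in the right-continuous filtration $\dbF^*$. By Girsanov's theorem, $\tilde B^\mu := B - \int_0^\cd \mu_s ds$ is a $\dbP_\mu$-Brownian motion whose augmented filtration coincides with $\dbF^*$, so the martingale representation theorem forces every cadlag $\dbP$-martingale on $\dbF^*$ to be continuous; in particular $M^\dbP$ is continuous. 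Setting $A^\dbP_t := Y_0 + M^\dbP_t - Y_t$, this process agrees with $\bar A$ on $\cD$, and the left continuity of $Y$ combined with the continuity of $M^\dbP$ then propagates the monotonicity of $\bar A$ from $\cD$ to all of $[0,T]$.

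For the second stage, applying martingale representation to the continuous $\dbP_0$-martingale $M^{\dbP_0}$ produces a predictable $\dbR^d$-valued $Z$ with $M^{\dbP_0} = \int_0^\cd Z_s dB_s$. Under $\dbP_\mu$, Girsanov rewrites this as
\beaa
M^{\dbP_0}_t ~=~ \int_0^t Z_s d\tilde B^\mu_s + \int_0^t Z_s\cd\mu_s\, ds,
\eeaa
whose first summand is a continuous $\dbP_\mu$-martingale. Hence $N^\mu := M^{\dbP_0} - \int_0^\cd Z_s\cd\mu_s\, ds$ is a continuous $\dbP_\mu$-martingale, and $Y - Y_0 = N^\mu - \big(A^{\dbP_0} - \int_0^\cd Z_s\cd\mu_s\, ds\big)$ decomposes $Y$ as a continuous $\dbP_\mu$-martingale plus a finite variation part. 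Since a continuous $\dbP_\mu$-martingale of finite variation vanishes identically, this decomposition must agree (up to indistinguishability) with the one from the first stage, yielding $M^{\dbP_\mu} = M^{\dbP_0} - \int_0^\cd Z_s\cd\mu_s\, ds$.

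For the third stage I would set $\mu^*_s := L\,Z_s/|Z_s|$ on $\{Z_s\neq 0\}$ and $\mu^*_s := 0$ elsewhere, so that $|\mu^*|\le L$ and $\dbP^* := \dbP_{\mu^*}\in\cP$. For any $\dbP_\mu\in\cP$, the preceding formula combined with Girsanov gives
\beaa
M^{\dbP^*}_t ~=~ \int_0^t Z_s d\tilde B^\mu_s + \int_0^t Z_s\cd(\mu_s - \mu^*_s)\, ds,
\eeaa
whose first summand is a $\dbP_\mu$-martingale and whose drift satisfies $Z_s\cd(\mu_s - \mu^*_s) = Z_s\cd\mu_s - L|Z_s|\le 0$ by Cauchy-Schwarz, making $M^{\dbP^*}$ a $\dbP_\mu$-supermartingale. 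The principal obstacle I anticipate is the extension from $\cD$ to $[0,T]$ in the first stage: preserving simultaneously the martingale property of $M^\dbP$ and the pathwise monotonicity of $A^\dbP$ requires a careful interplay between the left continuity of $Y$, the continuity of $\dbF^*$-martingales under each $\dbP\in\cP$, and the density of $\cD$ in $[0,T]$.
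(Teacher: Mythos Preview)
Your proposal is correct and follows the route the paper implicitly intends: the paper does not actually give a proof of this lemma, but merely states it as a consequence of the discussion in Section~\ref{sec:DM} together with the left continuity of $Y$ established immediately before. Your three-stage argument (Proposition~\ref{prop: DM on D} plus left-continuous extension, martingale representation and Girsanov, pointwise maximization in $\mu$) spells out precisely these omitted details and is the natural way to fill the gap; there is nothing substantively different from what the paper has in mind.
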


We next make use of the Skorokhod decomposition in Section \ref{sec:skorokhod}. For the simplicity of notation, we denote $M^*:=M^{\dbP^*}$ and $A^*:=A^{\dbP^*}$. Consider the backward process:
\beaa
\l_t = (M^*_{T-t} - X_{T-t}) - (M^*_T- X_T).
\eeaa
Then we can find a non-negative process $\eta$ and a non-decreasing process $\k$ such that the statements in Lemma \ref{lem:skorokhod} holds. Denote the corresponding forward processes:
\beaa
\ol \eta_t :=\eta_{T-t} \q\mbox{and}\q \ol\k_t:=\k_{T-t}.
\eeaa

\begin{prop}\label{prop: k A}
It holds that
\beaa
\ol\k = A^*_T-A^*_\cd, \q\dbP_0\mbox{-a.s.}
\eeaa
\end{prop}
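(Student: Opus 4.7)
My plan is to prove the two inequalities $\ol\k_t \le A^*_T - A^*_t$ and $\ol\k_t \ge A^*_T - A^*_t$ separately, both $\dbP_0$-a.s.

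\textbf{Upper bound.} Using the Doob--Meyer decomposition from Lemma~\ref{lem: DM} at $\dbP=\dbP^*$ together with $Y_T = X_T$ (which holds because the only stopping time in $\cT_*$ not less than $T$ is $T$ itself), I can compute $M^*_T - X_T = A^*_T - Y_0$ and hence rewrite
\[
\l_s \;=\; \bigl(Y_{T-s}-X_{T-s}\bigr) - \bigl(A^*_T-A^*_{T-s}\bigr).
\]
Setting $\tilde\eta_s := Y_{T-s}-X_{T-s}\ge 0$ and $\hat A_s := A^*_T - A^*_{T-s}$ (which is non-decreasing in $s$ with $\hat A_0 = 0$) provides a second decomposition $\l = \tilde\eta - \hat A$ of the form described in Lemma~\ref{lem:skorokhod}(i). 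The minimality property Lemma~\ref{lem:skorokhod}(iii) then yields $\k \le \hat A$, which in forward time reads $\ol\k_t \le A^*_T - A^*_t$.

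\textbf{Lower bound.} To upgrade this to equality, it suffices to show that $(\tilde\eta,\hat A)$ also satisfies the flat-off condition Lemma~\ref{lem:skorokhod}(ii); by the uniqueness of the Skorokhod decomposition (any non-decreasing $\k'$ arising from a decomposition of $\l$ and satisfying both flat-off and minimality must coincide with $\k$), this forces $\hat A = \k$. In forward time the flat-off reads
\[
\int_0^T \1_{\{Y_s > X_s\}}\,dA^*_s \;=\; 0, \qquad \dbP_0\mbox{-a.s.},
\]
i.e. the Snell-envelope compensator $A^*$ only charges the contact set $\{Y = X\}$. Combining with the upper bound then yields $\ol\k_t = A^*_T - A^*_t$.

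\textbf{Main obstacle.} The flat-off identity is the delicate point of the argument. The standard recipe is: for each $t\in[0,T]$, set $\t^* := \inf\{s\ge t : Y_s = X_s\}$; exploiting that $\dbP^*$ achieves the essential supremum in $\ol\cE_L$ (which is built into its construction in Lemma~\ref{lem: DM}) and that $\t^*$ is the smallest optimal stopping rule, one derives the genuine martingale identity $Y_t = \dbE^{\dbP^*}[Y_{\t^*}\,|\,\cF_t]$; substituting the Doob--Meyer decomposition of $Y$ then forces $A^*_{\t^*}-A^*_t = 0$, which gives the flat-off. In the present semicontinuous framework the subtle steps are (a) verifying $Y_{\t^*} = X_{\t^*}$ from the left continuity of $Y$ together with the upper semicontinuity of $X$, and (b) passing the nonlinear expectation through the stopping time $\t^*$, for which I would rely on the $\ol\cE_L$-uniform integrability furnished by Lemma~\ref{lem:UI} and the Fatou lemma (Lemma~\ref{lem: Fatou}).
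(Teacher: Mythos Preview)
Your upper bound matches the paper exactly. The gap is in the lower bound: your plan to prove the flat-off condition $\int_0^T \1_{\{Y>X\}}\,dA^*=0$ invokes that $\t^*=\inf\{s\ge t:Y_s=X_s\}$ is an optimal stopping rule and that $\dbP^*$ is an optimal measure. But in this paper's logical order, those facts constitute Theorem~\ref{thm: OS}, whose proof (last paragraph of Section~\ref{sec: Optimal stopping}) \emph{uses} Proposition~\ref{prop: k A} to obtain precisely the identity $A^*_{\t^*}=\int_0^{\t^*}\1_{\{X=Y\}}\,dA^*_t=0$. So the argument is circular. Note also that Lemma~\ref{lem: DM} does not say that $\dbP^*$ ``achieves the essential supremum in $\ol\cE_L$''; it only says $M^{\dbP^*}$ is a $\dbP$-supermartingale for every $\dbP\in\cP$, which is weaker. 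There is a secondary technical issue: even granting flat-off for $\hat A$, deducing $\hat A=\k$ by running the argument of Lemma~\ref{lem:skorokhod}(iii) with the roles reversed requires the l.s.c.\ of $\tilde\eta_s=(Y-X)_{T-s}$ in $s$; the paper only establishes left continuity of $Y$, and a supermartingale may jump down from the right, so this is not available.

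The paper's route for the lower bound is different and non-circular. It writes out explicitly $\ol\k_t = -\min_{s\ge t}(M^*_s-X_s)-A^*_T+Y_0$, applies $\ul\cE_L[\,\cdot\,|\cF_t]$ to both sides, and uses only the $\dbP$-supermartingale property of $M^{\dbP^*}$ from Lemma~\ref{lem: DM} to bound $\ol\cE_L\big[\max_{s\ge t}(X_s-M^*_s+M^*_t)\big|\cF_t\big]\ge Y_t$. This sandwiches $\ul\cE_L[A^*_T-\ol\k_t|\cF_t]$ between $A^*_t$ and $A^*_t$; combined with the Step~1 inequality $\ol\k_t\le A^*_T-A^*_t$, the conditional-expectation identity upgrades to the pointwise one. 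No optimal stopping result is invoked.
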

\begin{proof}
{\bf 1}.\q It follows from the Doob-Meyer decomposition \eqref{DM decom} that
\beaa
Y_t-X_t-Y_0+A^*_t-(M^*_T-X_T) = \l_{T-t} = \ol\eta_t -\ol\k_t,
\q\dbP_0\mbox{-a.s.}
\eeaa
Since $M^*_T-X_T=M^*_T-Y_T=A^*_T-Y_0$, $\dbP_0$-a.s., it holds
\beaa
(Y_t-X_t)-(A^*_T-A^*_t) = \ol\eta_t -\ol\k_t,
\q\dbP_0\mbox{-a.s.}.
\eeaa
Note that $Y\ge X$ and $A^*$ is non-decreasing, $\dbP_0$-a.s. By (iii) of Lemma \ref{lem:skorokhod}, we obtain
\be\label{k&a}
\ol\k \le A^*_T-A^*_\cd, \q \dbP_0\mbox{-a.s.}
\ee

\ms
\no {\bf 2}.\q Recall that
\beaa
\ol\k_t = -\min_{s\ge t}\Big((M^*_s-X_s)-(M^*_T-X_T)\Big).
\eeaa
Since $X_T = Y_T$, $\dbP_0$-a.s., it follows from \eqref{DM decom} that
\beaa
\ol\k_t = -\min_{s\ge t}(M^*_s-X_s)-A^*_T+Y_0,\q \dbP_0\mbox{-a.s.}
\eeaa
Taking nonlinear conditional expectation on both sides, we obtain
\be\label{est in step2}
\ul\cE_L[A^*_T-\ol\k_t|\cF_t] ~=~ Y_0-\ol\cE_L\big[\max_{s\ge t}(X_s-M^*_s)\big|\cF_t\big], \q \dbP_0\mbox{-a.s.}
\ee
Since by Lemma \ref{lem: DM} $M^*$ is $\dbP$-supermartingale for all $\dbP\in\cP$, we obtain
\beaa
\ol\cE_L\big[\max_{s\ge t}(X_s-M^*_s+M^*_t)\big|\cF_t\big]
\ge
\esup_{\t\in\cT_*^t}\ol\cE_L[(X_\t-M^*_\t+M^*_t)|\cF_t]
\ge 
\esup_{\t\in\cT_*^t}\ol\cE_L[X_\t|\cF_t]=Y_t,~\dbP_0\mbox{-a.s.}
\eeaa
In view of \eqref{k&a} and \eqref{est in step2}, we get
\beaa
A^*_t\le \ul\cE_L[A^*_T-\ol\k_t|\cF_t]\le Y_0 - Y_t +M^*_t=A^*_t,
\q \dbP_0\mbox{-a.s.}
\eeaa
It implies that $A^*_t =\ul\cE_L[A^*_T-\ol\k_t|\cF_t] $, $\dbP_0$-a.s. Again by \eqref{k&a}, we conclude that $A^*_t = A^*_T-\ol\k_t$, $\dbP_0$-a.s.
\end{proof}

\ms
\no {\bf Proof of Theorem \ref{thm: OS}}\q
We are going to prove that $\t^*:=\inf\{t:X_t=Y_t\}\in \cT_*$ is an optimal stopping time. By Lemma \ref{lem: DM} and Proposition \ref{prop: k A}, it holds
\beaa
Y_0 = Y_{\t^*}-M^*_{\t^*}+A^*_{\t^*}\q\mbox{and}\q
A^*_{\t^*}=\int_0^{\t^*}1_{\{t:X_t=Y_t\}}dA^*_t=0.
\eeaa
Therefore $Y_0 = \dbE^{\dbP^*}[Y_{\t^*}]$. Further, by (ii) of Lemma \ref{lem:skorokhod}, we may deduce that
\beaa
A,~Y\q\mbox{are both left continuous, $\dbP_0$-a.s.}
\eeaa
Hence $A^*_{\t^*}=0$, $\dbP_0$-a.s. Taking into account that $X$ is pathwise u.s.c., we obtain that
\beaa
Y_{\t^*}=X_{\t^*},\q \dbP_0\mbox{-a.s.}
\eeaa
Finally, we have
\beaa
Y_0 = \dbE^{\dbP^*}[Y_{\t^*}] = \dbE^{\dbP^*}[X_{\t^*}].
\eeaa
This implies that $\t^*$ is an optimal stopping time.
\qed

\section{Appendix}\label{sec: Appendix}

In preparation to the proof of Proposition \ref{prop: especial sol}, we study the processes:
\beaa
\bar \eta_t:=\ul\cE_L\big[u_\ch-\a\ch -\b B_\ch \big| \cF_t\big]
:=\einf_{\dbP\in\cP_L}\dbE^\dbP[u_\ch-\a\ch -\b B_\ch \big| \cF_t\big].
\eeaa
Similar to Proposition 6.5 in \cite{RTZ}, one may easily prove the following result of dynamic programming.

\begin{lem}\label{lem:eta hat}
There exists $Z\in\dbH^2$ such that
\beaa
\bar\eta_t = u_\ch -a(\ch-t) + \int_t^\ch L|\b-Z_s|ds -\int_t^\ch Z_s d B_s.
\eeaa
Moreover, it holds $\dbP_0\big[\eta_\t = \bar\eta_\t\big]=1$ for all $\t\in\cT_\ch$. In particular, we have
\be\label{eta DPP}
\eta_0 ~=~ \ul\cE_L\big[\eta_\t-\a\t-\b B_\t\big]\q \mbox{for all}~~\t\in\cT_\ch. 
\ee
\end{lem}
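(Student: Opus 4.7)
The plan is to identify $\bar\eta$ as the $Y$-component of a standard Lipschitz BSDE via Girsanov duality, to transfer this identification to $\eta$ by a path-shifting argument, and to read off \eqref{eta DPP} as a corollary. The generator $L|\b-z|-\a$ is the Legendre dual of the constraint $\|\mu\|\le L$, shifted by the $\a$- and $\b$-corrections that come from optional sampling of the driftless part of $B$ at $\ch$.

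First, classical BSDE theory furnishes a unique $(Y,Z)\in\dbS^2\times\dbH^2$ for
\beaa
Y_t = u_\ch + \int_t^\ch\bigl(L|\b-Z_s|-\a\bigr)ds-\int_t^\ch Z_s dB_s,
\eeaa
the generator being Lipschitz in $z$ and independent of $y$ and the terminal bounded by hypothesis on $u$. To identify $Y$ with $\bar\eta$, I would fix an admissible $\mu$ with $\|\mu\|\le L$ and use Girsanov to write $B=W^\mu+\int_0^\cd\mu_s\,ds$ with $W^\mu$ a $\dbP_\mu$-Brownian motion; taking $\dbE^{\dbP_\mu}[\,\cd\,|\cF_t]$ of the BSDE and applying optional sampling to $W^\mu$ at the bounded $\ch$ yields
\beaa
Y_t = \dbE^{\dbP_\mu}\bigl[u_\ch-\a(\ch-t)-\b(B_\ch-B_t)\bigm|\cF_t\bigr]+\dbE^{\dbP_\mu}\!\Big[\int_t^\ch\bigl(L|\b-Z_s|-(Z_s-\b)\mu_s\bigr)ds\Big|\cF_t\Big].
\eeaa
The second bracket is nonnegative for every $\mu$ and vanishes at the admissible choice $\mu^*_s:=L\sgn(Z_s-\b)$, so taking $\inf_\mu$ gives $Y_t=\bar\eta_t$ and establishes the representation.

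Second, for $\dbP_0[\eta_\t=\bar\eta_\t]=1$, the core ingredient is the stability of $\cP_L$ under concatenation at $(t,\o)$, which yields
\beaa
\ul\cE_L[X|\cF_t](\o)=\ul\cE_L[X^{t,\o}]\q\dbP_0\text{-a.s.}
\eeaa
for any $\ol\cE_L$-integrable $X$. Applied to $X=u_\ch-\a(\ch-t)-\b(B_\ch-B_t)$, whose shift at $(t,\o)$ is precisely $(u_\ch)^{t,\o}-\a(\ch^{t,\o}-t)-\b B_{\ch^{t,\o}-t}$ by the identities $B_{(\ch)}(\o\otimes_t\o')=\o_t+\o'_{\ch^{t,\o}-t}$ and $\ch(\o\otimes_t\o')=\ch^{t,\o}(\o')$, this gives $\bar\eta_t=\eta_t$ pathwise at each deterministic $t$. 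Upgrading the identity to an arbitrary $\t\in\cT_\ch$ follows from a regular-conditional-probability / monotone-class argument, exactly as in the proof of Proposition 6.5 of \cite{RTZ}.

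Finally, \eqref{eta DPP} is immediate: the Girsanov computation of step one, stopped at $\t\in\cT_\ch$ rather than $\ch$, produces $Y_0\le\dbE^{\dbP_\mu}[Y_\t-\a\t-\b B_\t]$ for every admissible $\mu$, with equality at the same $\mu^*$, whence $\eta_0=Y_0=\ul\cE_L[Y_\t-\a\t-\b B_\t]=\ul\cE_L[\eta_\t-\a\t-\b B_\t]$ after inserting $\bar\eta_\t=\eta_\t$. The main technical hurdle in this plan is the shift identity at a stopping time in step two: it requires the concatenation stability of $\cP_L$ together with a measurable-selection argument, and is the only place where genuinely new work (beyond classical BSDE machinery) is needed; the BSDE representation and the resulting DPP are then routine.
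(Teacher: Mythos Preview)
Your proposal is correct and is exactly the approach the paper has in mind: the paper does not actually give a proof of this lemma but simply refers to Proposition~6.5 in \cite{RTZ}, and the argument there is precisely the Girsanov/BSDE duality you outline (Lipschitz BSDE with generator $L|\b-z|-\a$, Legendre duality $\inf_{|\mu|\le L}\big(L|\b-z|+(\b-z)\cdot\mu\big)=0$ attained at $\mu^*=L\,\sgn(z-\b)$), together with the concatenation stability of $\cP_L$ and a regular-conditional-probability argument to pass from deterministic times to $\t\in\cT_\ch$. Your identification of the shift identity at a stopping time as the only nontrivial step is accurate; one minor bookkeeping point is that your Girsanov computation in step one literally yields $Y_t=\ul\cE_L\big[u_\ch-\a(\ch-t)-\b(B_\ch-B_t)\,\big|\,\cF_t\big]$, which differs from the paper's $\bar\eta_t$ by the $\cF_t$-measurable shift $\a t+\b B_t$---this is harmless for the DPP \eqref{eta DPP} (and indeed the lemma as stated in the paper contains exactly this inconsistency at $t=\ch$), but you should keep track of it when matching $\eta_\t$ with $\bar\eta_\t$.
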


\no {\bf Proof of Proposition \ref{prop: especial sol}}\q
Without loss of generality, we only need to verify the properties at $\th=(0,0)$.

\no (i)\q By Lemma \ref{lem:eta hat}, $\eta$ is $\dbF^*$-adapted. Take $(\a',\b')\in\ul\cJ \eta_0$, i.e.
\beaa
\eta_0 = \max_{\t\in\cT_{\ch'}} \ol\cE_L\big[\eta_\t - \a'\t-\b' B_\t\big]~\mbox{for some}~\ch'\in\cT^+.
\eeaa
In view of \eqref{eta DPP}, we obtain that
\beaa
\dbE^{\dbP_\mu}\big[\eta_\t-\a\t-\b B_\t \big]\ge \eta_0\ge 
\dbE^{\dbP_\mu}\big[\eta_\t-\a'\t-\b' B_\t \big],
~~\mbox{for all}~\dbP_\mu \in\cP_L~\mbox{and}~\t\in\cT_{\ch\we\ch'}.
\eeaa
So, $\dbE^{\dbP_\mu} [-(\a'-\a)\t-(\b'-\b) B_\t]\le 0$ for all $\t\in\cT_{\ch\we\ch'}$. It follows that
\beaa
-\a'+\a-(\b'-\b)\cd\mu \le 0.
\eeaa
By taking $\mu^*:=-L\big(\sgn(\b'_i-\b_i)\big)_{1\le i\le d}$, we obtain that
\beaa
-\a'+\a+L|\b'-\b| \le 0.
\eeaa

\ms
\no (ii)\q  Since $u$ is Lipschitz continuous, one may easily estimate that
\beaa
|\eta(\th)-\eta(\th')| &\le & C \Big(\ol\cE_L\big[|\ch^\th -\ch^{\th'}|+\|B_{(\ch^\th-t)\we\cd}-B_{(\ch^{\th'}-t')\we\cd}\|\big]+d(\th,\th')\Big)\\
&\le & C' \Big(\ol\cE_L\big[|\ch^\th -\ch^{\th'}|\big]+d(\th,\th')\Big).
\eeaa
We applied BDG inequality for the last inequality. Since $\ch\in \cH$, we may suppose 
\beaa
\ch = T_0\we\ch_0,\q \ch_0:=\inf\{t:\o_t\notin O\}
~~\mbox{for some bounded open set}~O.
\eeaa
Then it is clear that $|\ch^\th -\ch^{\th'}|\le |t-t'|+|\ch^\th_0-\ch^{\th'}_0|$. Further it is proved in \cite{Ren-Ellip} that
\beaa
\lim_{d(\th,\th')\rightarrow 0} \ol\cE_L\big[|\ch^\th_0-\ch^{\th'}_0|\big] =0.
\eeaa
Therefore function $\eta$ is continuous.
\qed

\end{document}